\documentclass[11pt,leqno,
,a4paper]{article} 
\pdfoutput=1 
\usepackage{amsmath}
\usepackage{amssymb}
\usepackage{amsthm}
\usepackage{verbatim}
\usepackage{enumerate}
\usepackage{color}
\usepackage{blkarray}
\usepackage{bm}
\usepackage{todonotes}
\usepackage{comment}
\usepackage{placeins}
\usepackage{comment}
\usepackage{color}
\usepackage[affil-it]{authblk}
\usepackage{mathtools}
\mathtoolsset{showonlyrefs,showmanualtags}
\numberwithin{equation}{section}
\usepackage[flushmargin]{footmisc}

\usepackage{tikz}
\usetikzlibrary{cd}
\usepackage{graphicx,amscd,mathrsfs}
\usetikzlibrary{decorations.pathreplacing,decorations.pathmorphing,decorations.markings}
\usepackage{tikz-cd}

\usepackage[top=30truemm,bottom=30truemm,left=30truemm,right=30truemm]{geometry}

\usepackage{hyperref}
\definecolor{cmykred}{cmyk}{0,0.8,0.5,0}
\definecolor{cmykgreen}{cmyk}{0.692,0,0.287,0.439}
\hypersetup{colorlinks=true,urlcolor=blue,citecolor=cmykred,linkcolor=blue}

\theoremstyle{definition}
\newtheorem{definition}{Definition}[section]
\newtheorem{remark}[definition]{Remark}
\theoremstyle{plain}
\newtheorem{theorem}[definition]{Theorem}
\newtheorem{lemma}[definition]{Lemma}
\newtheorem{proposition}[definition]{Proposition}
\newtheorem{corollary}[definition]{Corollary}
\theoremstyle{remark}

\def\p{\partial}

\def\re{\mathrm{\mathop{Re}}}

\def\bbC{\mathbb{C}}
\def\bbP{\mathbb{P}}
\def\bbR{\mathbb{R}}
\def\bbZ{\mathbb{Z}}

\def\calV{\mathcal{V}}
\def\calY{\mathcal{Y}}
\def\bbC{\mathbb{C}}

\newcommand{\two}{I\hspace{-1.2pt}I}
\newcommand{\three}{I\hspace{-1.2pt}I\hspace{-1.2pt}I}
\newcommand{\four}{I\hspace{-1.2pt}V}
\newcommand{\six}{V\hspace{-1.2pt}I}
\newcommand{\seven}{V\hspace{-1.2pt}I\hspace{-1.2pt}I}
\newcommand{\eight}{V\hspace{-1.2pt}I\hspace{-1.2pt}I \hspace{-1.2pt}I}
\newcommand{\nine}{I\hspace{-1.2pt}X}
\newcommand{\eleven}{X\hspace{-1.2pt}I}
\newcommand{\twelve}{X\hspace{-1.2pt}I\hspace{-1.2pt}I}

\title{Asymptotic analysis of the confluent Dotsenko--Fateev system}
\author{
	Shunya Adachi
}
\date{}

\allowdisplaybreaks
\begin{document}

\maketitle

\begingroup
\renewcommand{\thefootnote}{}
\footnotetext{%
\setlength{\leftskip}{1em}%
\noindent
\textit{2020 Mathematics Subject Classification.}
Primary 34M40; Secondary 34M30, 33C70.

\noindent
\textit{Key words and phrases.}
Dotsenko--Fateev equation, Selberg-type integrals,
Stokes matrices,
quantum Painlev\'{e} equation of type V, middle Laplace transform.
}
\endgroup

\begin{abstract}
The Dotsenko--Fateev equation is a third-order linear ordinary differential equation studied in two-dimensional conformal field theory. 
We study the confluent Dotsenko--Fateev system, a rank-three system of linear ordinary differential equations obtained by confluence from a first-order system equivalent to the Dotsenko--Fateev equation.
It has an irregular singular point at infinity, and its solutions admit double integral representations of confluent Selberg type. 
Using these representations, we construct sectorial fundamental solution matrices around infinity, establish their asymptotic expansions, and determine the associated Stokes matrices explicitly. 
We further study three aspects of the confluent Dotsenko--Fateev system:  its equivalence to the degree-two polynomial reduction of the Schr\"{o}dinger equation for the quantum Painlev\'{e} equation of type V, its realization as a restriction of a confluent version of the linear Pfaffian system associated with Appell's hypergeometric function $F_4$, and its reduction to the trivial rank-one equation via integral transforms and a quadratic change of the independent variable. 
\end{abstract}

\section{Introduction}
In the context of two-dimensional conformal field theory, Dotsenko and Fateev \cite{DF1984} studied the Selberg-type double integral 
\begin{equation}\label{eq:DFintscalar}
	u(x)=\int_{C} (s_1-s_2)^g \prod_{i=1}^2 s_i^a (s_i-1)^b (s_i-x)^c\,\frac{ds_1\wedge ds_2}{s_1 s_2}.
\end{equation}
They showed that this integral satisfies a third-order Fuchsian linear ordinary differential equation, which we call the \emph{Dotsenko--Fateev equation}. 
This equation can be regarded as a higher-order analogue of the Gauss hypergeometric differential equation.
Dotsenko and Fateev studied the connection problem, monodromy representation, and monodromy invariant Hermitian form of the Dotsenko--Fateev equation using the integral \eqref{eq:DFintscalar}. 
More recently, Mimachi \cite{Mimachi2024} gave a condition for the (ir)reducibility of the monodromy representation and studied subsystems in certain reducible cases.

In this paper, we consider the following rank-three system obtained by confluence from a Fuchsian system equivalent to the Dotsenko--Fateev equation:
	\begin{equation}\label{eq:cDFeqintro}
	\frac{dV}{dx}=\left(T+\frac{A}{x}\right)V,
\end{equation}
where
\begin{equation}\label{eq:TandAIntro}
	T=\begin{pmatrix}
	0 & & \\
	& 1 & \\
	& & 2
	\end{pmatrix},
	\quad 
	A=\begin{pmatrix}
	2a+2b+g & -b &~0 \\
	2a+2b+g & a+g&~-2b-g \\
	0 & a+b+g &~ -2b-g
	\end{pmatrix}. 
\end{equation}
We call this system the \emph{confluent Dotsenko--Fateev system}.
It has a regular singular point at $x=0$ and an irregular singular point of Poincar\'{e} rank $1$ at $x=\infty$.
In terms of its singularity structure, it can be regarded as a higher-rank analogue of Kummer's confluent hypergeometric equation.
The main purpose of this paper is to study the behavior of solutions near the irregular singular point.
This system has integral representations of solutions of the form 
	\begin{equation}\label{eq:cDFintintro}
	V(x)=\int_{\Delta}(t_1-t_2)^{g}\prod_{i=1}^{2}t_i^{-(a+b+g)}(1-t_i)^{b}e^{xt_i}	\vec{\varphi},
	\end{equation}
where $\Delta$ is a suitable cycle and $\vec{\varphi}$ is a $3$-vector of $2$-forms. 
Since $T$ has pairwise distinct eigenvalues, the system \eqref{eq:cDFeqintro} admits a unique formal fundamental solution matrix at $x=\infty$ of the form
\begin{equation}\label{eq:FSMintro}
  \widehat{\calY}(x)=\widehat{F}(x)x^\Lambda e^{Tx},
  \qquad
  \widehat{F}(x)=I+\sum_{n=1}^{\infty}F_nx^{-n},
\end{equation}
where $\Lambda=\operatorname{diag}(A)=\operatorname{diag}[2a+2b+g,\ a+g,\ -2b-g]$. 
Using the integral representation \eqref{eq:cDFintintro}, we construct sectorial fundamental solution matrices and show that their normalized forms admit $\widehat{\calY}(x)$ as their asymptotic expansion at infinity (Theorem~\ref{thm:asymptoticexpansion}). 
We then adapt Haraoka's contour-deformation method \cite{Haraoka2024} to determine the associated Stokes matrices explicitly (Theorem~\ref{thm:StokesMat}).

For generalized confluent hypergeometric equations,
explicit Stokes matrices have been obtained by
Duval and Mitschi~\cite{DuvalMitschi1989} and Hien~\cite{Hien2022}.
However, the confluent Dotsenko--Fateev system does not, in general,
belong to this class.
In a different direction, Glutsyuk and Sabot~\cite{GlutsyukSabot2010} computed Stokes matrices of ordinary differential equations satisfied by exponential hypergeometric integrals associated with hyperplane arrangements in general position. 
But their result does not apply directly to the present integral
\eqref{eq:cDFintintro}, since the underlying hyperplane
arrangement is not in general position: it has parallel hyperplanes and triple intersections.

We further discuss three aspects of the confluent Dotsenko--Fateev system: an application to the Schr\"{o}dinger equation for quantum Painlev\'{e} V, the compatibility of confluence and restriction for Appell's $F_4$ system, and a reduction to the trivial rank-one equation via integral transforms and a quadratic change of the independent variable.
First, we show that the confluent Dotsenko--Fateev system \eqref{eq:cDFeqintro} is equivalent to the degree-two polynomial reduction of the Schr\"{o}dinger equation for quantum Painlev\'{e} V studied by Nagoya \cite{Nagoya2011} (Proposition~\ref{prop:NagoyacDF}). 
Nagoya also constructed polynomial solutions of this Schr\"{o}dinger equation using integral representations.
We show that, in the degree-two case, sectorial bases of these polynomial solutions can be expressed in terms of the fundamental solution matrices constructed in Section~\ref{sec:asymptoticanalysis}. 
The Stokes matrices for these sectorial bases then follow directly from Theorem~\ref{thm:StokesMat} (Corollary~\ref{cor:StokesSch}).

Second, we examine the relation to the linear Pfaffian system associated with Appell's hypergeometric function $F_4$ in two variables.
The work of Kato \cite{Kato1995} and Haraoka \cite{Haraoka2013} shows that the Dotsenko--Fateev equation arises by restricting a pull-back of Appell's $F_4$ system to a component of its singular locus. 
More recently, Haraoka \cite{Haraoka2024} studied a confluent $F_4$ system
and explicitly determined its Stokes multipliers.
We show that, in the present case, the restriction to the singular locus is compatible with the confluence.
More precisely, we show that, after a scalar gauge transformation, the restriction of Haraoka's confluent $F_4$ system to the corresponding component of its singular locus is equivalent to the confluent Dotsenko--Fateev system \eqref{eq:cDFeqintro} (Proposition~\ref{prop:restcF4}). 
However, the Stokes matrices of the restricted system do not follow immediately from Haraoka's result (Remark~\ref{rem:AdaHara}).

Finally, we show that, for generic parameters, the confluent Dotsenko--Fateev system \eqref{eq:cDFeqintro} can be reduced to the trivial rank-one equation by combining integral transforms with a quadratic change of the independent variable (Theorem~\ref{thm:reduction}).
The integral transforms used in this reduction are middle convolution and the middle Laplace transform.
Middle convolution is a basic operation in Katz's theory of Fuchsian systems and is constructed from Euler-type integral transforms
(cf.~Dettweiler--Reiter~\cite{DR2007}).
The middle Laplace transform, introduced by the author for linear Pfaffian systems with irregular singularities \cite{adachi2025arXiv}, formulates the Laplace transform
from the viewpoint of Katz's theory. 
The quadratic change of the independent variable plays an essential role in this reduction: it changes the rigidity index from $0$ to $2$, whereas all other steps in this reduction leave the rigidity index unchanged.
For related work on the Dotsenko--Fateev equation from
the viewpoint of middle convolution, we refer to
Ebisu et al.~\cite{EbisuEtAl2023} and Haraoka et al.~\cite{arXiv:2111.11192}.

\smallskip

A natural direction for further study is to extend the present analysis to the case of multiple integrals with more integration variables. 
The Dotsenko--Fateev integral \eqref{eq:DFintscalar} corresponds to the case $m=2$ of the Selberg-type integral
	\begin{equation}\label{eq:SelbergIntegral}
	\int_{C}\prod_{1\le i<j\le m}(s_i-s_j)^g
	\prod_{k=1}^{m}s_k^a(s_k-1)^b(s_k-x)^c\,\frac{ds_1\wedge ds_2\wedge\cdots\wedge ds_m}{s_1s_2\cdots s_m}.
	\end{equation}
Mimachi \cite{Mimachi1993} explicitly described the rank-$(m+1)$ Fuchsian system with singular points at $x=0$, $1$ and $\infty$ associated with \eqref{eq:SelbergIntegral}.
He subsequently obtained explicit connection formulas for the associated Selberg-type integrals \cite{Mimachi2013}.
The confluence described in Section~\ref{sec:conf}, followed by the change of variables $t_k=1/s_k$, leads formally to confluent Selberg-type integrals of the form
	\begin{equation}\label{eq:confSelberg}
	\int_{\Delta}\prod_{1\le i<j\le m}(t_i-t_j)^g
	\prod_{k=1}^{m}t_k^{-(a+b+(m-1)g)}(1-t_k)^be^{xt_k}\,\frac{dt_1\wedge dt_2\wedge\cdots\wedge dt_m}{t_1t_2\cdots t_m}.
	\end{equation}
The corresponding first-order system is of rank $(m+1)$ and has a regular singular point at $x=0$ and an irregular singular point at $x=\infty$.
For these systems, a natural problem is to construct sectorial fundamental solution matrices at infinity and determine the associated Stokes matrices explicitly using the integral representation \eqref{eq:confSelberg}.
It would also be interesting to investigate whether the three aspects discussed above admit natural extensions to this higher-rank setting.

\smallskip

This paper is organized as follows. 
In Section~\ref{sec:conf}, we derive the confluent Dotsenko--Fateev system and integral representations of its solutions by confluence. 
Using these integral representations, we construct sectorial fundamental solution matrices at infinity and determine the associated Stokes matrices in Section~\ref{sec:asymptoticanalysis}.
In Section~\ref{sec:threeaspects}, we study the three aspects outlined above: an application to the Schr\"{o}dinger equation associated with quantum Painlev\'{e} V, the relation to the confluent $F_4$ system through restriction, and the reduction to the trivial rank-one equation. 

\section{Confluence and integral representations}
\label{sec:conf}

Let us derive the confluent Dotsenko--Fateev system and its integral representation by confluence. 
We first recall a rank-three Fuchsian system associated with the Dotsenko--Fateev equation (cf.~Haraoka~\cite[Section~2]{Haraoka2013}):
\begin{equation}\label{eq:DFeq}
\frac{dU}{dx}=\left(\frac{A_1}{x}+\frac{A_2}{x-1}\right)U,
\end{equation}
where
\begin{equation}
\begin{aligned}
A_1&=\begin{pmatrix}
2a+2c+g & 0 & b \\
0 & 0 & 0 \\
0 & 2b+g & a+c
\end{pmatrix}
\quad (\sigma(A_1)=\{0, a+c, 2a+2c+g\}), 
\\
A_2&=\begin{pmatrix}
0 & 0 & 0 \\
0 & 2b+2c+g & a \\
2a+g & 0 & b+c 
\end{pmatrix}
\quad (\sigma(A_2)=\{0, b+c, 2b+2c+g\}), \\
A_{\infty}&:=-(A_1+A_2)
\quad (\sigma(A_{\infty})=\{-2c, -(2a+2b+2c+g), -(a+b+2c+g)\}).
\end{aligned}
\end{equation}
Here the unknown vector $U(x)$ is given by 
\begin{equation}\label{eq:DFint}
U(x)=\int_{C} (s_1-s_2)^g \prod_{i=1}^2 s_i^a (s_i-1)^b (s_i-x)^c\,\vec{\eta},
\end{equation}
where
\begin{equation}\label{eq:eta}
	\begin{aligned}
	\vec{\eta}
	=\left(
	\frac{ds_1\wedge ds_2}{s_1 s_2}, 
	\frac{ds_1\wedge ds_2}{(s_1-1)(s_2-1)}, 
	\frac{ds_1\wedge ds_2}{s_1(s_2-1)}+\frac{ds_1\wedge ds_2}{s_2(s_1-1)}
	\right)^T
	\end{aligned}.
\end{equation}
Note that the first component of $U(x)$ coincides with \eqref{eq:DFintscalar}. 
In the Dotsenko--Fateev system \eqref{eq:DFeq}, we make the substitution
\begin{equation}
(a,x) \mapsto \left(a-c, -\frac{x}{c}\right)
\end{equation}
and then take the limit $c\to \infty$. 
Then the system \eqref{eq:DFeq} turns into
\begin{equation}\label{eq:cDFeq0}
	\frac{dV_0}{dx}=\left(T_0+\frac{A_1}{x}\right)V_0,
	\quad
	A_1=\begin{pmatrix}
	2a+g & 0 & b \\
	0 & 0 & 0 \\
	0 & 2b+g & a
	\end{pmatrix},
	\quad 
	T_0=\begin{pmatrix}
	0 & 0 & 0\\
	0 & 2 & -1 \\
	-2 & 0 & 1
	\end{pmatrix}.
\end{equation}
We refer to this equation as \emph{the confluent Dotsenko--Fateev system}. 

By the same procedure, the integrand of \eqref{eq:DFint} changes as follows: 
\begin{equation}
\begin{aligned}
	(s_1-s_2)^g \prod_{i=1}^2 s_i^a (s_i-1)^b (s_i-x)^c &\mapsto 
	(s_1-s_2)^g \prod_{i=1}^2 s_i^{a-c} (s_i-1)^b \left(s_i+\frac{x}{c}\right)^c \\
	&=(s_1-s_2)^g \prod_{i=1}^2 s_i^{a} (s_i-1)^b \left(1+\frac{x}{cs_i}\right)^c \\
	&\to (s_1-s_2)^g \prod_{i=1}^2 s_i^{a} (s_i-1)^b e^{\frac{x}{s_i}},
\end{aligned}
\end{equation}
and the vector $\vec{\eta}$ is unchanged. 
This formally leads to the following integral representation
of solutions of \eqref{eq:cDFeq0}:
\begin{equation}\label{eq:cDFint0}
	V_0(x)=\int_{\Delta}(s_1-s_2)^g \prod_{l=1}^2 s_l^{a} (s_l-1)^b e^{\frac{x}{s_l}}\,\vec{\eta}
\end{equation}
with a suitable cycle $\Delta$, which will be specified later.
Here the vector of cocycles $\vec{\eta}$ is given by \eqref{eq:eta}. 
To simplify the analysis, we consider the gauge transformation
\begin{equation}\label{eq:gauge}
	V_0=PV, \quad 
	P=\begin{pmatrix}
	-1 & 0 & 0 \\
	-1 & 1 & -1 \\
	-2 & 1 & 0
	\end{pmatrix}.
\end{equation}
Then the system \eqref{eq:cDFeq0} is changed to
\begin{equation}\label{eq:cDFeq}
	\frac{dV}{dx}=\left(T+\frac{A}{x}\right)V,
\end{equation}
where
\begin{equation}\label{eq:TandA}
	T=\begin{pmatrix}
	0 & & \\
	& 1 & \\
	& & 2
	\end{pmatrix},
	\quad 
	A=\begin{pmatrix}
	2a+2b+g & -b &~0 \\
	2a+2b+g & a+g&~-2b-g \\
	0 & a+b+g &~ -2b-g
	\end{pmatrix}. 
\end{equation}
Applying the gauge transformation \eqref{eq:gauge} and the change of integral variables $(s_1,s_2)=(1/t_2,1/t_1)$ to the integral \eqref{eq:cDFint0}, we have
\begin{equation}\label{eq:cDFint}
	V(x)=\int_{\Delta}(t_1-t_2)^{g}\prod_{l=1}^{2}t_l^{-(a+b+g)}(1-t_l)^{b}e^{xt_l}	\vec{\varphi}, 
\end{equation}
where $\vec{\varphi}=P^{-1}\vec{\eta}=(\varphi_1,\varphi_2,\varphi_3)^{T}$ is given by
\begin{equation}\label{eq:phi}
\begin{aligned}
	\varphi_1&=\frac{dt_1\wedge dt_2}{t_1t_2} \\
	\varphi_2&=-\frac{dt_1\wedge dt_2}{t_1(1-t_2)}-\frac{dt_1\wedge dt_2}{t_2(1-t_1)} \\
	\varphi_3&=\frac{dt_1\wedge dt_2}{(1-t_1)(1-t_2)}.
\end{aligned}
\end{equation}
For the cycles considered in Section \ref{sec:asymptoticanalysis}, one can verify directly that \eqref{eq:cDFint} gives solutions of \eqref{eq:cDFeq} by differentiation under the integral sign and integration by parts.
We denote the integral \eqref{eq:cDFint} by $V_{\Delta}(x)$ when we want to make clear the choice of the cycle $\Delta$. 
In the following, we mainly treat the system \eqref{eq:cDFeq} and its integral representation of solutions \eqref{eq:cDFint}.

\section{Asymptotic analysis at infinity}
\label{sec:asymptoticanalysis}
In this section, we examine the Stokes phenomenon of the confluent Dotsenko--Fateev system \eqref{eq:cDFeq} at infinity. 
Since $T$ has distinct eigenvalues, the system \eqref{eq:cDFeq}
admits a unique formal fundamental solution matrix of the form
\begin{equation}\label{eq:FSM}
  \widehat{\calY}(x)=\widehat{F}(x)x^\Lambda e^{Tx},
  \qquad
  \widehat{F}(x)=I+\sum_{n=1}^{\infty}F_nx^{-n},
\end{equation}
where
\[
  \Lambda=\operatorname{diag}(A)=\operatorname{diag}[2a+2b+g,\ a+g,\ -2b-g].
\]
The matrices $F_n$ are determined recursively by substituting
\eqref{eq:FSM} into \eqref{eq:cDFeq}.
Below, we construct sectorial solutions using the integral
representation \eqref{eq:cDFint} and show that they admit
asymptotic expansions given by constant multiples of the columns
of $\widehat{\calY}(x)$.

\subsection{Sectorial solutions and their asymptotics}
\label{sec:SecandAsy}
In what follows, we shall construct a fundamental solution matrix defined on a sector in terms of the integral \eqref{eq:cDFint}.
To justify the convergence of integrals and the asymptotic analysis below,
we assume for the moment that 
	\begin{equation}\label{eq:assumpconv}
	\re(-a-b-g),~\re(b),~\re(g)>0. 
	\end{equation}
These conditions ensure the absolute convergence of all the integrals appearing below.
The resulting formulas will later be extended meromorphically with respect to $(a,b,g)$. 

To describe the integration cycles, we prepare some notation following Haraoka \cite{Haraoka2024}. 
For $\alpha,\beta\in\bbC\cup\{\infty e^{\sqrt{-1}\theta}\mid\theta\in\bbR\}$ and $i=1,2$, we denote by $(t_i:\alpha\to\beta)$ the open segment in the complex $t_i$-plane from $\alpha$ to $\beta$. 
Then, for $\theta\in\bbR$, we define three domains $\Delta_{ij}^{\theta}$ $(ij\in\{00,10,11\})$ by
\begin{equation}
\begin{aligned}
\Delta_{00}^{\theta}&:=(t_1:0\to \infty e^{\sqrt{-1}\theta})\times (t_2:0\to t_1), \\
\Delta_{10}^{\theta}&:=(t_1:1\to \infty e^{\sqrt{-1}\theta})\times (t_2:0\to \infty e^{\sqrt{-1}\theta}), \\
\Delta_{11}^{\theta}&:=(t_1:1\to \infty e^{\sqrt{-1}\theta})\times (t_2:1\to t_1).
\end{aligned}
\end{equation}
For each domain, we set a branch of the integrand
	\begin{equation}\label{eq:integrand}
	\Phi(t_1,t_2;x)=(t_1-t_2)^{g}\prod_{l=1}^{2}t_l^{-(a+b+g)}(1-t_l)^{b}e^{xt_l}
	\end{equation}
as follows: 
\begin{align}
&\Delta_{00}^{\theta}:
	\begin{cases}
	\arg t_1=\theta, \\
	\arg(1-t_1)=0 \quad \text{at $t_1=0$}, \\
	\arg t_2=\theta, \\
	\arg(1-t_2)=0 \quad \text{at $t_2=0$}, \\
	\arg(t_1-t_2)=\theta,
	\end{cases}  \label{eq:branch1}
\\
&\Delta_{10}^{\theta}:
	\begin{cases}
	\arg t_1=0 \quad \text{at $t_1=1$}, \\
	\arg(1-t_1)=\theta+\pi, \\
	\arg t_2=\theta, \\
	\arg(1-t_2)=0 \quad \text{at $t_2=0$}, \\
	\arg(t_1-t_2)=0 \quad \text{at $(t_1,t_2)=(1,0)$},
	\end{cases}  \label{eq:branch2}
\\
&\Delta_{11}^{\theta}:
	\begin{cases}
	\arg t_1=0 \quad \text{at $t_1=1$}, \\
	\arg(1-t_1)=\theta+\pi, \\
	\arg t_2=0 \quad \text{at $t_2=1$}, \\
	\arg(1-t_2)=\theta+\pi , \\
	\arg(t_1-t_2)=\theta.
	\end{cases}  \label{eq:branch3}
\end{align}
We define the sectors $\Sigma_{k}$ $(k\in\bbZ)$ by
	\begin{equation}\label{eq:Sigma}
	\Sigma_{k}:=\left\{x\in \bbC^{\times}\,\middle| \,-\frac{3}{2}\pi+k\pi<\arg x<\frac{\pi}{2}+k\pi\right\}.
	\end{equation}
Here we regard these sectors as sectors on the Riemann surface of the logarithm of $\bbC^{\times}$.
For $x\in\Sigma_{k}$, by taking $\theta\in\bbR$ such that $\cos(\arg x+\theta)<0$, the integral \eqref{eq:cDFint} converges and gives a solution to \eqref{eq:cDFeq}. 
The following lemma can be shown easily. 
\begin{lemma}\label{lem:theta}
For any $x\in\Sigma_{k}$, there exists $\theta\in(-(k+1)\pi,-k\pi)$ such that $\cos(\arg x+\theta)<0$. 
\end{lemma}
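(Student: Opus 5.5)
The plan is to reduce the claim to an elementary statement about angles. Write $\phi=\arg x$. Then $\cos(\phi+\theta)<0$ holds exactly when $\phi+\theta$ lies in an open interval $(\pi/2+2m\pi,\ 3\pi/2+2m\pi)$ for some $m\in\bbZ$. Fix $x\in\Sigma_k$, so that
\[
-\tfrac{3}{2}\pi+k\pi<\phi<\tfrac{\pi}{2}+k\pi .
\]
As $\theta$ runs over $(-(k+1)\pi,-k\pi)$, the sum $\phi+\theta$ runs over the open interval $I_\phi:=(\phi-(k+1)\pi,\ \phi-k\pi)$, which has length $\pi$. So the task is to show that $I_\phi$ meets some interval $(\pi/2+2m\pi,\ 3\pi/2+2m\pi)$ in a set of positive length. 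Then any $\theta$ with $\phi+\theta$ in that intersection works.

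To do this, set $\psi:=\phi-k\pi$. The condition on $\phi$ becomes $-3\pi/2<\psi<\pi/2$, and $I_\phi=(\psi-\pi,\psi)$. Since $\psi-\pi\in(-5\pi/2,-\pi/2)$, the natural target is the interval $J:=(-3\pi/2,-\pi/2)$, which is the case $m=-1$. The intersection is
\[
I_\phi\cap J=\bigl(\max(\psi-\pi,-3\pi/2),\ \min(\psi,-\pi/2)\bigr).
\]
It is nonempty precisely when $\psi-\pi<-\pi/2$ and $-3\pi/2<\psi$, i.e.\ when $-3\pi/2<\psi<\pi/2$. This is exactly the defining inequality of $\Sigma_k$. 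Choosing $\theta$ so that $\phi+\theta$ is, for instance, the midpoint of this intersection gives $\theta\in(-(k+1)\pi,-k\pi)$ with $\cos(\phi+\theta)<0$.

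The proof is a direct check with no real obstacle. The only care needed is bookkeeping: $\arg x$ is understood on the Riemann surface of the logarithm, as in the definition of $\Sigma_k$. With that convention, $\psi=\phi-k\pi$ is the correct normalization, with no reduction modulo $2\pi$. The check also shows why the sector $\Sigma_k$ has opening exactly $2\pi$: it is the union over $\theta\in(-(k+1)\pi,-k\pi)$ of the half-planes $\{\cos(\arg x+\theta)<0\}$.
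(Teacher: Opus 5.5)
Your argument is correct. The paper omits the proof as an easy check, and your reduction is exactly the intended elementary verification: you set $\psi=\arg x-k\pi\in(-3\pi/2,\pi/2)$ and intersect $(\psi-\pi,\psi)$ with $(-3\pi/2,-\pi/2)$. Your target interval ($m=-1$) is also the one the paper uses later, since in the proof of Proposition~\ref{prop:firstterm} the admissible $\theta$ satisfies $\delta=\arg x+\theta+\pi\in(-\pi/2,\pi/2)$. Your closing remark that $\Sigma_k$ is a union of half-planes is accurate only with this lift fixed: on the Riemann surface of the logarithm, the condition $\cos(\arg x+\theta)<0$ alone selects infinitely many intervals.
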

For $x\in\Sigma_k$, choose $\theta\in (-(k+1)\pi,-k\pi)$ such that $\cos(\arg x+\theta)<0$. 
By Cauchy's theorem, the value of $V_{\Delta_{ij}^{\theta}}(x)$ is independent
of the admissible choice of such $\theta$.
Then we define the functions $V_{ij}^{(k)}(x)$ ($ij\in \{00,10,11\}$) on $\Sigma_{k}$ by 
	\begin{equation}\label{eq:Vijvec}
	V_{ij}^{(k)}(x):=V_{\Delta_{ij}^{\theta}}(x)
	=\int_{\Delta_{ij}^{\theta}}(t_1-t_2)^{g}\prod_{l=1}^{2}t_l^{-(a+b+g)}(1-t_l)^{b}e^{xt_l} \,\vec{\varphi}.
	\end{equation}
Here, when $x$ varies continuously within the sector, we choose $\theta\in (-(k+1)\pi,-k\pi)$ continuously so that $\cos(\arg x+\theta)<0$.
Then $V_{ij}^{(k)}(x)$ gives a solution to \eqref{eq:cDFeq} on $\Sigma_{k}$. 

\begin{proposition}\label{prop:firstterm}
Let $k\in\bbZ$ and $\Sigma'\Subset \Sigma_k$ be any proper closed subsector. 
Then it holds that 
	\begin{align}
	V_{00}^{(k)}(x)&=x^{2a+2b+g}(c_{00}\vec{e}_1+o(1)), \label{eq:leadingterm1}\\
	V_{10}^{(k)}(x)&=e^{x}x^{a+g}(c_{10}\vec{e}_2+o(1)),\label{eq:leadingterm2} \\
	V_{11}^{(k)}(x)&=e^{2x}x^{-2b-g}(c_{11}\vec{e}_3+o(1))\label{eq:leadingterm3}
	\end{align}
as $x\to\infty$, uniformly for $x\in\Sigma'$. 
Here $\vec{e}_1=(1,0,0)^T$, $\vec{e}_{2}=(0,1,0)^T$, $\vec{e}_{3}=(0,0,1)^T$ and
	\begin{align}
	c_{00}&=e^{\pi\sqrt{-1}(2a+2b+g)}\frac{\Gamma(-a-b-g)\Gamma(-a-b-\frac{g}{2})\Gamma(g)}{\Gamma(\frac{g}{2})}, \label{eq:c00}\\
	c_{10}&=e^{\pi\sqrt{-1}(a+b+g)}\Gamma(b)\Gamma(-a-b-g), \\
	c_{11}&=e^{-\pi\sqrt{-1}g}\frac{\Gamma(b)\Gamma(b+\frac{g}{2})\Gamma(g)}{\Gamma(\frac{g}{2})}.
\end{align}
\end{proposition}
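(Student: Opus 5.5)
The plan is a Laplace-type (Watson's lemma) analysis of each integral, after rescaling the integration variables so that the exponential factor becomes independent of $x$. I treat the three cases separately, using the explicit branches \eqref{eq:branch1}--\eqref{eq:branch3} and the convergence assumption \eqref{eq:assumpconv}. Throughout I fix $\theta$ as in Lemma~\ref{lem:theta}, uniformly for $x\in\Sigma'$. This can be done: since $\Sigma'$ is a closed subsector, one may choose finitely many $\theta$'s, each with $\cos(\arg x+\theta)\le -\delta<0$ on a piece of $\Sigma'$.

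\emph{Case $V_{00}^{(k)}$.} Substitute $t_l=u_l/x$, where the rotated rays for $u_l$ lie in the half-plane $\re u<0$; this is legitimate by Cauchy's theorem. Then $\Phi\,dt_1\wedge dt_2$ becomes
\[
x^{2a+2b+g}\,(u_1-u_2)^g\prod_l u_l^{-(a+b+g)}(1-u_l/x)^b e^{u_l}\,du_1\wedge du_2,
\]
up to the $x^{-2}$ produced by the forms. In $\varphi_1=dt_1\wedge dt_2/(t_1t_2)$ the $x$-powers cancel exactly. The forms $\varphi_2$ and $\varphi_3$ each carry an extra factor $t_l=u_l/x$, so they are $O(x^{-1})$ relative to $\varphi_1$. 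Dominated convergence then applies, using $|(1-u/x)^b|\le C(1+|u|)^{|b|}$ on the rays, $\re(-a-b-g)>0$ for integrability at $0$, and the exponential decay at infinity. The leading coefficient is
\[
\int (u_1-u_2)^g\prod_l u_l^{-a-b-g-1}e^{u_l}\,du_1\,du_2
\]
over $u_1$ on the ray and $u_2$ on the segment from $0$ to $u_1$. I would evaluate it by setting $u_2=u_1 s$, $s\in(0,1)$. The $u_1$-integral is a Gamma integral along the rotated ray, namely $e^{\pi i(\cdot)}\Gamma(-2a-2b-g)$ times a factor $(1+s)^{2a+2b+g}$ absorbed from $e^{u_1(1+s)}$. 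What remains is $\int_0^1(1-s)^g s^{-a-b-g-1}(1+s)^{2a+2b+g}\,ds$. This is a symmetric beta-type integral: the substitution $s\mapsto$ $\tfrac{1-s}{1+s}$ or a quadratic transformation should reduce it. I would use the duplication formula $\Gamma(g)/\Gamma(g/2)$ to match \eqref{eq:c00}. The phase $e^{\pi i(2a+2b+g)}$ comes from writing $x^{-(\cdot)}$ against $\arg t=\theta$ with $\theta\in(-(k+1)\pi,-k\pi)$.

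\emph{Case $V_{11}^{(k)}$.} Substitute $t_l=1+u_l/x$. This extracts $e^{2x}$, and the factor $(1-t_l)^b$ gives $x^{-2b}$, $(t_1-t_2)^g$ gives $x^{-g}$, and $dt_1\wedge dt_2$ gives $x^{-2}$. Only $\varphi_3$, with its $(1-t_1)(1-t_2)$ denominator, gains $x^{2}$; this yields the order $x^{-2b-g}$ and the leading vector $\vec e_3$. The same duplication-formula computation as in the first case, now with exponent $b$ at both points, gives $c_{11}$.

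\emph{Case $V_{10}^{(k)}$.} The two factors separate asymptotically. The $t_1$ variable is localized near $1$ via $t_1=1+u_1/x$, and $t_2$ near $0$ via $t_2=u_2/x$, with the cross factor $(t_1-t_2)^g\to1$. In $\varphi_2$ the term $dt_1\wedge dt_2/(t_2(1-t_1))$ dominates, giving $e^x x^{-b}x^{a+b+g}=e^xx^{a+g}$ times a product of two one-dimensional Gamma integrals, $\Gamma(b)\Gamma(-a-b-g)$ with phases. The other components are lower order. The sign in $\varphi_2$ is absorbed by $\arg(1-t_1)=\theta+\pi$.

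\emph{Main obstacle.} I expect the hardest step to be uniformity and justification at the edges, rather than the leading-order computations. Two points need care. The first is the $t_2$-segment that runs up to $t_1$ (respectively up to $1$) and leaves the scaling region. The second is that the rescaled contours must remain in the decay half-plane uniformly on $\Sigma'$. For both I would split each domain at $|t_l|\sim|x|^{-1/2}$ and bound the outer part by $e^{-\delta|x|^{1/2}}$. Getting the branch phases exactly right is the other delicate point; I would check them against the ray direction $\theta$.
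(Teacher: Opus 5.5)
Your proposal is correct. Its analytic core is the same as the paper's: rescale about the relevant endpoint so the exponential becomes $x$-independent, apply dominated convergence, and evaluate Gamma or Selberg-type integrals. Two steps, however, are organized differently.

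\emph{Handling the vector components.} The paper first uses the partial-fraction identity for $\varphi_2$ to write $V_{ij}$ through the scalar integrals $I_{ij}(a,b,g;x)$, $I_{ij}(a,b-1,g;x)$ and $I_{ij}(a+1,b-1,g;x)$, and proves one leading-term formula for $I_{ij}$. It then shows the second components of $V_{00}$ and $V_{11}$ are subleading by a cancellation: the leading coefficient depends only on $a+b$ and $g$, resp.\ not on $a$. You instead count powers of $x$ form by form, using that the factors $t_l$ or $1-t_l$ not vanishing at the relevant endpoint stay bounded away from $0$ on the rays. This gives the non-leading components directly with relative error $O(x^{-1})$, and no cancellation is needed.

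\emph{Evaluating the limit integral.} The paper quotes the Laguerre--Selberg formula; you compute it by hand, and this works. With $\alpha=-a-b-g$, the substitution $u_2=u_1s$ gives $\Gamma(2\alpha+g)\int_0^1(1-s)^g s^{\alpha-1}(1+s)^{-2\alpha-g}\,ds$ up to a phase. The substitutions $s=\frac{1-r}{1+r}$ and then $r^2=w$ turn the $s$-integral into $2^{-2\alpha}B(\frac{g+1}{2},\alpha)$. Duplication applied to $\Gamma(2\alpha+g)$ and to $\Gamma(g)/\Gamma(\frac g2)$ then yields $\Gamma(\alpha)\Gamma(\alpha+\frac g2)\Gamma(g)/\Gamma(\frac g2)$, and taking $\alpha=b$ gives $c_{11}$. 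Writing $u=e^{-\pi\sqrt{-1}}v$ on the ray $\arg u=\arg x+\theta\in(-\frac32\pi,-\frac12\pi)$ produces exactly the phases in $c_{00},c_{10},c_{11}$, including the sign from $\varphi_2$ in $c_{10}$. So your route is self-contained and slightly sharper, while the paper's is shorter given the citation.

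\emph{Two simplifications.} First, the splitting at $|t_l|\sim|x|^{-1/2}$ is unnecessary. The rescaled domains (e.g.\ $u_1$ on a ray, $u_2$ from $0$ to $u_1$) are scale invariant, so dominated convergence on the whole rescaled domain already suffices. Second, if you fix $\theta$ on pieces of $\Sigma'$, the rescaled ray direction moves with $x$. You should then dominate the supremum over directions of the $x$-dependent factor, or, as the paper does, fix the rescaled direction $\delta$ and let $\theta=\theta(x)$ vary.
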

\begin{proof}
We fix $k\in\mathbb{Z}$ and a proper closed subsector $\Sigma' \Subset \Sigma_{k}$. 
It is enough to assume that the opening of $\Sigma'$ is smaller than $\pi$: any proper closed subsector in $\Sigma_k$ can be covered by a finite union of such subsectors. 
Then we can choose a constant $\delta\in(-\pi/2,\pi/2)$ such that, for each $x\in\Sigma'$, there exists an admissible $\theta=\theta(x)$ satisfying
\[
  \delta=\arg x+\theta+\pi.
\]
Moreover, we choose $\delta$ so that the corresponding $\theta$'s range over a compact subinterval of $(-(k+1)\pi,-k\pi)$.
For this admissible choice of $\theta$, we write $\Delta_{ij}:=\Delta_{ij}^{\theta}$.
To simplify notation, we suppress the superscript
$(k)$ and write $V_{ij}(x):=V_{ij}^{(k)}(x)$ throughout this proof. 
Let us introduce the scalar integral
	\begin{equation}
	\begin{aligned}
	I_{ij}(a,b,g;x):=\int_{\Delta_{ij}}\Phi(t_1,t_2;x)\varphi_1=\int_{\Delta_{ij}}(t_1-t_2)^{g}\prod_{l=1}^{2}t_l^{-(a+b+g)}(1-t_l)^{b}e^{xt_l}\frac{dt_1\wedge dt_2}{t_1 t_2}.
	\end{aligned}
	\end{equation}
Then, noting that 
	\begin{equation}
	-\frac{1}{t_1(1-t_2)}-\frac{1}{t_2(1-t_1)}=\frac{1}{t_1t_2}+\frac{1}{(1-t_1)(1-t_2)}-\frac{1}{t_1t_2(1-t_1)(1-t_2)},
	\end{equation}
we have
	\begin{equation}
	\int_{\Delta_{ij}}\Phi(t_1,t_2;x)\varphi_2=I_{ij}(a,b,g;x)+I_{ij}(a,b-1,g;x)-I_{ij}(a+1,b-1,g;x).
	\end{equation}
Besides, we have
	\begin{equation}
	\int_{\Delta_{ij}}\Phi(t_1,t_2;x)\varphi_3=I_{ij}(a,b-1,g;x).
	\end{equation}
Consequently, we obtain
	\begin{equation}\label{eq:Vij_Iij}
	V_{ij}(x)=\begin{pmatrix}
	I_{ij}(a,b,g;x) \\
	I_{ij}(a,b,g;x)+I_{ij}(a,b-1,g;x)-I_{ij}(a+1,b-1,g;x) \\
	I_{ij}(a,b-1,g;x)
	\end{pmatrix}.
	\end{equation}
Then, we shall examine the asymptotic behavior of $I_{ij}(a,b,g;x)$. 
We first consider the integral $I_{00}(a,b,g;x)$. 
Put
	\begin{equation}
	u_1=e^{\pi\sqrt{-1}}xt_1, 
	\quad 
	u_2=e^{\pi\sqrt{-1}}xt_2.
	\end{equation}
Then the domain $\Delta_{00}$ changes to $\widetilde{\Delta}_{00}:=(u_1:0\to\infty e^{\sqrt{-1}\delta})\times (u_2:0\to u_1)$ and we have
	\begin{equation}
	\begin{aligned}
	I_{00}(a,b,g;x)=&e^{\pi\sqrt{-1}(2a+2b+g)}x^{2a+2b+g} \\
	 &\times \int_{\widetilde{\Delta}_{00}}(u_1-u_2)^g \prod_{l=1}^{2}u_l^{-(a+b+g)}\left(1+\frac{u_l}{x}\right)^{b}e^{-u_l}\,\frac{du_1\wedge du_2}{u_1 u_2},
	\end{aligned}
	\end{equation}
where the branches of the integrand are specified as follows. 
	\begin{equation}
	\widetilde{\Delta}_{00}:
	\begin{cases}
	\arg u_1=\delta, \\
	\arg(1+u_1/x)=0 \quad \text{at $u_1=0$}, \\
	\arg u_2=\delta, \\
	\arg(1+u_2/x)=0 \quad \text{at $u_2=0$}, \\
	\arg(u_1-u_2)=\delta.
	\end{cases}  \label{eq:branch_tildedD11}
	\end{equation}
We consider taking the limit $x\to\infty$, $x\in \Sigma'$ for the integral. 
Interchanging the order of limit and integral (which will be justified later), we have 
	\begin{equation}\label{eq:integral00}
	\begin{aligned}
	\lim_{\substack{x\to\infty \\ x\in \Sigma'}}&\int_{\widetilde{\Delta}_{00}}(u_1-u_2)^g \prod_{l=1}^{2}u_l^{-(a+b+g)}\left(1+\frac{u_l}{x}\right)^{b}e^{-u_l}\,\frac{du_1\wedge du_2}{u_1 u_2} \\
	&=\int_{\widetilde{\Delta}_{00}}(u_1-u_2)^g \prod_{l=1}^{2}u_l^{-(a+b+g+1)}e^{-u_l}\,du_1\wedge du_2. 
	\end{aligned}
	\end{equation}
By Cauchy's theorem, we may rotate both variables in the limiting
integral simultaneously from argument $\delta$ to $0$, using
$|\delta|<\pi/2$ and the convergence assumptions.
Thus the limiting integral equals the integral over
$0<u_2<u_1<\infty$.

Here we recall the Laguerre--Selberg integral formula (e.g. the case of $(n,k)=(2,0)$ of Corollary 8.2.2 in Andrews--Askey--Roy \cite{AARBook}).
	For $\alpha,\gamma\in\bbC$ such that $\re(\alpha)>0$ and $\re(\gamma)>-\min\{1/2,\re(\alpha)\}$, it holds that
	\begin{equation}
	\iint_{(0,\infty)^2}|u_1-u_2|^{2\gamma}\prod_{l=1}^{2}u_l^{\alpha-1}e^{-u_l}\,du_1du_2=\frac{\Gamma(\alpha)\Gamma(\alpha+\gamma)\Gamma(1+2\gamma)}{\Gamma(1+\gamma)}.
	\end{equation}
This implies
	\begin{equation}\label{eq:LSformula}
	\iint_{0<u_2<u_1<\infty}(u_1-u_2)^{2\gamma}\prod_{l=1}^{2}u_l^{\alpha-1}e^{-u_l}\,du_1du_2=\frac{1}{2}\frac{\Gamma(\alpha)\Gamma(\alpha+\gamma)\Gamma(1+2\gamma)}{\Gamma(1+\gamma)}.
	\end{equation}
Applying this formula with $\alpha=-a-b-g$ and $\gamma=g/2$, we get
	\begin{equation}
\iint_{\widetilde{\Delta}_{00}}(u_1-u_2)^{g}\prod_{l=1}^{2}u_l^{-(a+b+g+1)}e^{-u_l} du_1du_2
	=\frac{1}{2}\frac{\Gamma(-a-b-g)\Gamma(-a-b-\frac{g}{2})\Gamma(1+g)}{\Gamma(1+\frac{g}{2})}. 
	\end{equation}
By using
	\begin{equation}
	\frac{\Gamma(1+g)}{\Gamma(1+\frac{g}{2})}=\frac{g\Gamma(g)}{\frac{g}{2}\Gamma(\frac{g}{2})}=2\frac{\Gamma(g)}{\Gamma(\frac{g}{2})},
	\end{equation}
we obtain 
	\begin{equation}
	I_{00}(a,b,g;x)=x^{2a+2b+g}(c_{00}+o(1)),
	\end{equation}
where $c_{00}$ is given by 	\eqref{eq:c00}. 
Since $c_{00}$ depends only on $a+b$ and $g$, it is unchanged by the shift
$(a,b)\mapsto(a+1,b-1)$. The corresponding leading terms in the second
component of \eqref{eq:Vij_Iij} therefore cancel, while
$I_{00}(a,b-1,g;x)=x^{2a+2b+g}O(x^{-2})$.
Thus \eqref{eq:leadingterm1} follows.

We next consider the integral $I_{10}(a,b,g;x)$.
We fix the branch $\arg(t_1-1)=\theta$ on $\Delta_{10}$ and put
\[
  u_1=e^{\pi\sqrt{-1}}x(t_1-1),\qquad u_2=e^{\pi\sqrt{-1}}xt_2.
\]
Then the domain $\Delta_{10}$ changes to $
  \widetilde{\Delta}_{10}
  =(u_1:0\to\infty e^{\sqrt{-1}\delta})
   \times(u_2:0\to\infty e^{\sqrt{-1}\delta})$,
and we have
\begin{equation}\label{eq:Int10}
\begin{aligned}
&I_{10}(a,b,g;x)
   =e^{\pi\sqrt{-1}(a+b+g-1)}e^{x}x^{a+g-1} \\
  &\times\int_{\widetilde{\Delta}_{10}}
  \left(1-\frac{u_1-u_2}{x}\right)^g
  \left(1-\frac{u_1}{x}\right)^{-(a+b+g+1)}
  \left(1+\frac{u_2}{x}\right)^b 
  u_1^b u_2^{-(a+b+g+1)}e^{-u_1-u_2}
  \,du_1\wedge du_2.
\end{aligned}
\end{equation}
Here $\arg u_1=\arg u_2=\delta$, and the branches of the three factors
containing $x$ are determined by continuous logarithms that vanish at $(u_1,u_2)=(0,0)$. 

Interchanging the limit and the integral, as justified below, and using
the definition of the gamma function along the ray of argument $\delta$, we obtain
\begin{equation}\label{eq:I10leading}
I_{10}(a,b,g;x)
  =e^{x}x^{a+g-1}
  \left(e^{\pi\sqrt{-1}(a+b+g-1)}\Gamma(b+1)\Gamma(-a-b-g)+o(1)\right).
\end{equation}
Applying this formula with parameters $(a,b-1,g)$ and $(a+1,b-1,g)$ gives
\[
\begin{aligned}
I_{10}(a,b-1,g;x)&=e^{x}x^{a+g}O(x^{-1}),\\
I_{10}(a+1,b-1,g;x)
  &=e^{x}x^{a+g}
    \left(e^{\pi\sqrt{-1}(a+b+g-1)}\Gamma(b)\Gamma(-a-b-g)+o(1)\right)\\
  &=e^{x}x^{a+g}(-c_{10}+o(1)).
\end{aligned}
\]
Together with \eqref{eq:Vij_Iij} and \eqref{eq:I10leading}, these relations
give \eqref{eq:leadingterm2}.

Finally, we consider $I_{11}(a,b,g;x)$ and put
\[
  u_1=e^{\pi\sqrt{-1}}x(t_1-1),\qquad u_2=e^{\pi\sqrt{-1}}x(t_2-1).
\]
Then the domain $\Delta_{11}$ changes to
  $\widetilde{\Delta}_{11}
  =(u_1:0\to\infty e^{\sqrt{-1}\delta})\times(u_2:0\to u_1)
  $,
and we have
\begin{equation}
\begin{aligned}
I_{11}(a,b,g;x)
  &=e^{-\pi\sqrt{-1}g}e^{2x}x^{-2b-g-2}\\
  &\quad\times\int_{\widetilde{\Delta}_{11}}(u_1-u_2)^g
  \prod_{l=1}^{2}
  u_l^b\left(1-\frac{u_l}{x}\right)^{-(a+b+g+1)}e^{-u_l}
  \,du_1\wedge du_2.
\end{aligned}
\end{equation}
Here $\arg u_1=\arg u_2=\arg(u_1-u_2)=\delta$, and
$\arg(1-u_l/x)=0$ at $u_l=0$.
Taking the limit and rotating the common ray to the positive real axis,
we apply \eqref{eq:LSformula} with $\alpha=b+1$ and $\gamma=g/2$ to obtain
\begin{equation}\label{eq:I11leading}
\begin{aligned}
I_{11}(a,b,g;x)
  =e^{2x}x^{-2b-g-2}
  \left(
  e^{-\pi\sqrt{-1}g}
  \frac{\Gamma(b+1)\Gamma(b+1+\frac{g}{2})\Gamma(g)}
       {\Gamma(\frac{g}{2})}
  +o(1)\right).
\end{aligned}
\end{equation}
The leading coefficient is independent of $a$. 
Therefore
\[
\begin{aligned}
I_{11}(a,b-1,g;x)&=e^{2x}x^{-2b-g}(c_{11}+o(1)),\\
I_{11}(a+1,b-1,g;x)&=e^{2x}x^{-2b-g}(c_{11}+o(1)).
\end{aligned}
\]
The leading terms in the second component of \eqref{eq:Vij_Iij} cancel,
while \eqref{eq:I11leading} gives
$I_{11}(a,b,g;x)=e^{2x}x^{-2b-g}O(x^{-2})$.
This proves \eqref{eq:leadingterm3}.

It remains to justify the interchanges of limit and integral and the
uniformity of the resulting limits for $x\in\Sigma'$.
We give the details for $I_{10}(a,b,g;x)$, since the other cases are treated in a similar way. 
For the integral
	\begin{equation}\label{eq:Int10part}
	\int_{\widetilde{\Delta}_{10}}
  \left(1-\frac{u_1-u_2}{x}\right)^g
  \left(1-\frac{u_1}{x}\right)^{-(a+b+g+1)}
  \left(1+\frac{u_2}{x}\right)^b
  u_1^b u_2^{-(a+b+g+1)}e^{-u_1-u_2}
  \,du_1\wedge du_2
	\end{equation}
on the right-hand side of \eqref{eq:Int10}, we set
	\begin{equation}\label{eq:ulrl}
	u_l=r_le^{\sqrt{-1}\delta}\quad (r_l>0,~l=1,2).
	\end{equation}
By our choice of $\delta$, the values of $\theta(x)=\delta-\arg x-\pi$
lie in a fixed compact subinterval of $(-(k+1)\pi,-k\pi)$.
Therefore, with the branches fixed above, there exist constants $C_1,N>0$ such that 
\begin{equation}
\begin{aligned}
  &\left|
  \left(1-\frac{u_1-u_2}{x}\right)^g
  \left(1-\frac{u_1}{x}\right)^{-(a+b+g+1)}
  \left(1+\frac{u_2}{x}\right)^b
  \right|
  \leq C_1(1+r_1+r_2)^N
\end{aligned}
\end{equation}
uniformly for $x\in\Sigma'$ and $|x|\geq1$. 
Moreover, we have
	\begin{equation}
 	\left|u_1^b u_2^{-(a+b+g+1)}e^{-u_1-u_2}\right|\le C_2 r_1^{\re(b)}r_2^{\re(-a-b-g)-1}e^{-(r_1+r_2)\cos\delta}
	\end{equation}
for some $C_2>0$. 
Consequently, after the parametrization \eqref{eq:ulrl}, the absolute value of the integrand of \eqref{eq:Int10part} is bounded by
	\begin{equation}
	C_1C_2(1+r_1+r_2)^N r_1^{\re(b)}r_2^{\re(-a-b-g)-1}e^{-(r_1+r_2)\cos\delta}
	\le C r_1^{\re(b)}r_2^{-\re(a+b+g)-1}
    e^{-h(r_1+r_2)\cos\delta}
	\end{equation}
for some $C>0$ and $0<h<1$.  
Here we have used $\cos\delta>0$.
The last expression is independent of $x$ and is integrable on
	$(0,\infty)^2$, since
	$\re(b)>-1$ and $\re(-a-b-g)>0$
under the assumption \eqref{eq:assumpconv}. 
Moreover, for each fixed $r_1,r_2>0$, the factors depending on $x$ converge to $1$ uniformly for $x\in\Sigma'$ as $|x|\to\infty$.
Hence, by the dominated convergence theorem, the integral converges
to its limiting integral uniformly for $x\in\Sigma'$ as $|x|\to\infty$.
In particular, the interchange of limit and integral is justified.

The same argument applies to the shifted parameters
$(a,b-1,g)$ and $(a+1,b-1,g)$ occurring in \eqref{eq:Vij_Iij}, since the corresponding integrals also converge under \eqref{eq:assumpconv}.
\end{proof}
From this proposition, we see that $V^{(k)}_{00}(x),V^{(k)}_{10}(x)$ and $V^{(k)}_{11}(x)$ are linearly independent on $\Sigma_{k}$.
Therefore, 
	\begin{equation}\label{eq:calV}
	\calV^{(k)}(x):=(V_{00}^{(k)}(x),V_{10}^{(k)}(x),V_{11}^{(k)}(x))
	\end{equation}
gives a fundamental solution matrix of \eqref{eq:cDFeq} on $\Sigma_{k}$ and satisfies
	\begin{equation}
	\calV^{(k)}(x)=\left(C+o(1)\right)x^{\Lambda}e^{Tx},
	\quad C=\operatorname{diag}[c_{00},c_{10},c_{11}]
	\end{equation}
as $x\to\infty$, uniformly for $x\in\Sigma'$. 

We next show that $\calV^{(k)}(x)$ admits an asymptotic expansion on $\Sigma_{k}$. 
We set 
	\begin{equation}\label{eq:calY}
	\calY^{(k)}(x)=\calV^{(k)}(x)C^{-1}.
	\end{equation}
Then $\calY^{(k)}(x)$ gives a fundamental solution matrix of \eqref{eq:cDFeq} and satisfies 
	\begin{equation}
	\calY^{(k)}(x)=\left(I+o(1)\right)x^{\Lambda}e^{Tx}
	\end{equation}
since $x^{\Lambda}$ and $e^{Tx}$ are both diagonal.

\begin{theorem}\label{thm:asymptoticexpansion}
Let $k\in\bbZ$.
The fundamental solution matrix $\calY^{(k)}(x)$ defined by
\eqref{eq:calY} admits the asymptotic expansion
$\widehat{\calY}(x)$ given by \eqref{eq:FSM} as $x\to\infty$
in $\Sigma_k$.
More precisely, for every proper closed subsector
$\Sigma'\Subset\Sigma_k$ and every $N\geq1$,
\[
  \calY^{(k)}(x)e^{-Tx}x^{-\Lambda}
  -\sum_{n=0}^{N-1}F_nx^{-n}
  =O(x^{-N}),
  \qquad F_0=I,
\]
uniformly for $x\in\Sigma'$.
Consequently, $\calV^{(k)}(x)$ admits the asymptotic expansion
\[
  \widehat{\calV}(x):=\widehat{\calY}(x)C.
\]
\end{theorem}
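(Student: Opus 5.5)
Two routes are available. One is to expand the integrals $I_{ij}$ to all orders in $1/x$. The other uses the general theory, and that is the one I would take. By the Hukuhara--Turrittin / Sibuya theorem, on each sector of opening slightly larger than $\pi$ (the Stokes directions of $T=\mathrm{diag}(0,1,2)$ are where $\re((\lambda_i-\lambda_j)x)=0$, i.e.\ $\arg x=\pm\pi/2$) there is an actual solution matrix asymptotic to $\widehat{\calY}(x)$. The issue is that $\Sigma_k$ has opening $2\pi$, larger than what the general existence theorem gives directly. So I would instead prove the expansion column by column, directly from the integral representation, by refining the dominated-convergence argument of Proposition~\ref{prop:firstterm}.

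\textbf{Step 1 (each column has an asymptotic expansion of the correct shape).} Take the normalized integrals from the proof of Proposition~\ref{prop:firstterm}, for instance the integral in \eqref{eq:Int10part}. Expand the factors depending on $x$, namely $(1-(u_1-u_2)/x)^g(1-u_1/x)^{-(a+b+g+1)}(1+u_2/x)^b$, by Taylor's formula with remainder in $1/x$ up to order $N$. The remainder is bounded by $C|x|^{-N}(1+r_1+r_2)^{M}$, uniformly for $x\in\Sigma'$ with $|x|$ large. This is the main technical point. On the ray $u_l=r_le^{\sqrt{-1}\delta}$, the quantity $u/x$ has argument $\delta-\arg x$, which equals $\theta+\pi$ and stays in a compact subset avoiding the branch point. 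Hence $|1-u/x|$ is bounded below by $c(1+|u/x|)$, and the same holds for the factors $(1+u_2/x)$ and $(1-(u_1-u_2)/x)$. This gives a polynomial bound on the remainder that is uniform in $x$; for small $|u/x|$ one uses the ordinary Taylor estimate, and for large $|u/x|$ one uses $|x|^{-N}\le |u/x|^N|u|^{-N}$. Integrating term by term against $u_1^{b}u_2^{-(a+b+g+1)}e^{-u_1-u_2}$ (respectively the Laguerre--Selberg weights) produces finite moments. Therefore each $I_{ij}$ with shifted parameters, and hence each column $V_{ij}^{(k)}$, satisfies
\[
V_{ij}^{(k)}(x)\sim e^{\lambda x}x^{\mu}\sum_{n\ge0}v_nx^{-n}
\]
uniformly on $\Sigma'$, with the same $(\lambda,\mu)$ as in \eqref{eq:leadingterm1}--\eqref{eq:leadingterm3}.

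\textbf{Step 2 (identification with $\widehat{\calY}$).} Substitute this formal series into \eqref{eq:cDFeq}. Asymptotic expansions can be differentiated here, because the integrals depend analytically on $x$ and the expansions are uniform on subsectors; alternatively, differentiate under the integral sign, which gives the same kind of expansion. It follows that $e^{\lambda x}x^\mu\sum v_nx^{-n}$ is a formal solution. The formal solution of type $e^{\lambda_jx}x^{\Lambda_j}$ is unique up to a scalar, because $T$ has distinct eigenvalues, and its normalization is fixed by the leading vector $c_{ij}\vec e_j$ from Proposition~\ref{prop:firstterm}. Hence the expansion equals $c_{ij}$ times the $j$-th column of $\widehat{\calY}$. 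Dividing by $C$ gives the statement for $\calY^{(k)}$, and $\widehat{\calV}=\widehat{\calY}C$ follows.

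\textbf{Step 3 (parameters).} Everything above is done under \eqref{eq:assumpconv}, which is the standing assumption. The main obstacle is the uniform remainder bound in Step 1, especially for $\Delta_{00}$ and $\Delta_{11}$, where the factor $(u_1-u_2)^g$ and the iterated domain $u_2\in(0\to u_1)$ must be handled. I would first try to treat these cases exactly as $I_{10}$, since only the factors containing $x$ are expanded.
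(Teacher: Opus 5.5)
Your proof is correct, but it takes a different route from the paper, and its stated motivation misreads the general theory. All differences $T_{ii}-T_{jj}$ are real, so the only Stokes rays are $\arg x=\pi/2+m\pi$. The maximal sectors on which the existence theorem (the paper cites Corollary~4.8 of Cotti--Guzzetti--Masoero) gives a solution asymptotic to $\widehat{\calY}$ are therefore exactly the sectors $\Sigma_k$ of opening $2\pi$.

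The paper takes that solution $\widetilde{\calY}^{(k)}$ and writes $\calY^{(k)}=\widetilde{\calY}^{(k)}R$. It then uses only the leading-order information of Proposition~\ref{prop:firstterm}: the relation $x^\Lambda e^{Tx}Re^{-Tx}x^{-\Lambda}=I+o(1)$ forces $R=I$, because for each $i\neq j$ the factor $e^{(T_{ii}-T_{jj})x}$ grows along some ray of the sector. This comparison works on any sector of opening larger than $\pi$. So even the weaker existence statement you quote would suffice: apply the comparison on such subsectors of $\Sigma_k$ and cover. The detour was not forced.

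Your direct route does go through.
\begin{itemize}
\item On the rays $u_l=r_le^{\sqrt{-1}\delta}$, both $u_l/x$ and $(u_1-u_2)/x$ lie on the line $\bbR e^{\sqrt{-1}\theta}$. Here $\theta$ ranges over a compact subinterval of $(-(k+1)\pi,-k\pi)$, so this line stays uniformly away from $\pm1$.
\item The integral form of the Taylor remainder then gives $|R_N|\le C|x|^{-N}(1+r_1+r_2)^M$ uniformly.
\item Term-by-term integration against the weights gives finite moments, also for the shifted parameters under \eqref{eq:assumpconv}.
\item Formal uniqueness then identifies the resulting series. With $\mu=\Lambda_j$, the recursion $(T-\lambda_j)v_{n+1}=(\Lambda_j-n-A)v_n$ determines all $v_n$ from $v_0=c_{ij}\vec e_j$. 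So the series is $c_{ij}$ times the $j$-th column of $\widehat{\calY}$.
\end{itemize}
Your worry about $\Delta_{00}$ and $\Delta_{11}$ is unnecessary. After the substitution, $(u_1-u_2)^g$ is independent of $x$, so only the one-variable factors $(1\pm u_l/x)^{\beta}$ need expanding, which is easier than the $I_{10}$ case.

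Your approach gives a self-contained proof that does not cite the existence theorem, and it expresses the coefficients $F_nC$ as explicit moment integrals. The cost is the uniform remainder estimate and the differentiation and formal-uniqueness step. The paper's approach is shorter because it needs only the leading term. It also feeds directly into the uniqueness remark and the holomorphic parameter dependence of $\widetilde{\calY}^{(k)}$, which the paper then uses to continue $\calV^{(k)}$ beyond \eqref{eq:assumpconv}.
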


\begin{proof}
The system \eqref{eq:cDFeq} has Poincar\'e rank one at infinity,
and $T$ has distinct real eigenvalues.
Hence the sectors specified in
Corollary~4.8 and equation~(4.8) of \cite{CottiGuzzettiMasoero}
coincide with $\Sigma_k$ after reindexing.
By this result, there exists a fundamental solution matrix
$\widetilde{\calY}^{(k)}(x)$ near infinity in $\Sigma_k$
admitting the asymptotic expansion $\widehat{\calY}(x)$.
It extends to $\Sigma_k$ by analytic continuation.

Since $\calY^{(k)}(x)$ and $\widetilde{\calY}^{(k)}(x)$ solve the same
system, there is a constant matrix $R$ such that
\[
  \calY^{(k)}(x)=\widetilde{\calY}^{(k)}(x)R.
\]
Comparing their leading terms using
Proposition~\ref{prop:firstterm}, we obtain
\[
  x^\Lambda e^{Tx}R e^{-Tx}x^{-\Lambda}=I+o(1)
\]
as $x\to\infty$, uniformly on every proper closed subsector
of $\Sigma_k$.
On the other hand, the $(i,j)$ entry of the left-hand side is
\[
  R_{ij}x^{\Lambda_{ii}-\Lambda_{jj}}
  e^{(T_{ii}-T_{jj})x}.
\]
For $i\ne j$, the sector $\Sigma_k$ contains a ray along which
the exponential factor grows exponentially.
Thus the above convergence forces $R_{ij}=0$.
The diagonal entries give $R_{ii}=1$, so $R=I$.
Therefore $\calY^{(k)}(x)=\widetilde{\calY}^{(k)}(x)$, proving
the asserted asymptotic expansion.
The statement for $\calV^{(k)}(x)=\calY^{(k)}(x)C$ follows immediately.
\end{proof}
\begin{remark}\label{rem:uniqueness}
The argument in the proof of Theorem~\ref{thm:asymptoticexpansion}
also shows that $\calV^{(k)}(x)$ is the unique fundamental solution
matrix on $\Sigma_k$ admitting the asymptotic expansion
$\widehat{\calV}(x)$.
\end{remark}
Since the coefficient matrix of \eqref{eq:cDFeq} depends
holomorphically on $(a,b,g)$ and $T$ is fixed with pairwise
distinct eigenvalues, standard asymptotic theory with parameters implies that the normalized sectorial fundamental solution matrices $\widetilde{\calY}^{(k)}(x)$ in the proof of Theorem~\ref{thm:asymptoticexpansion} exist for all $(a,b,g)\in\mathbb{C}^3$ and depend holomorphically on the parameters.
Under \eqref{eq:assumpconv}, they coincide with $\calY^{(k)}(x)$,
so $\widetilde{\calY}^{(k)}(x)C$ gives a meromorphic continuation
of $\calV^{(k)}(x)$. 

By the gamma duplication formula
	\[
	\frac{\Gamma(g)}{\Gamma(g/2)}
	=\frac{2^{g-1}}{\sqrt{\pi}}\,
	\Gamma\!\left(\frac{g+1}{2}\right),
	\]
we have $C\in\mathrm{GL}(3,\bbC)$ if and only if
	\begin{equation}\label{eq:assumpStokes}
	-a-b-g,\quad -a-b-\frac{g}{2},\quad
	b,\quad b+\frac{g}{2},\quad \frac{g+1}{2}
	\notin\mathbb{Z}_{\leq 0}.
	\end{equation}
Under this condition, the continued matrix $\widetilde{\calY}^{(k)}(x)C$
is a fundamental solution matrix on $\Sigma_k$, and the conclusions
of Proposition~\ref{prop:firstterm},
Theorem~\ref{thm:asymptoticexpansion} and
Remark~\ref{rem:uniqueness} remain valid for the continued solutions.
For the rest of this section, we assume \eqref{eq:assumpStokes}
and use $\calV^{(k)}(x)$ to denote the continued fundamental
solution matrix.

\subsection{Stokes matrices and monodromy}
Now we define the Stokes matrices. 
Set 
	\begin{equation}
	\Pi_{k}:=\Sigma_{k}\cap\Sigma_{k+1}=\left\{x\in \bbC^{\times}\,\middle| \,-\frac{\pi}{2}+k\pi<\arg x<\frac{\pi}{2}+k\pi\right\}.
	\end{equation}
On the sector $\Pi_{k}$, there are two fundamental solution matrices $\calV^{(k)}(x)$ and $\calV^{(k+1)}(x)$. 
Thus, there exists a unique invertible matrix $S_{k}\in\mathrm{GL}(3,\bbC)$ such that
	\begin{equation}\label{eq:DefStokes}
	\calV^{(k+1)}(x)=\calV^{(k)}(x)S_{k}
	\quad 
	\text{on~$\Pi_{k}$}.
	\end{equation}
\begin{definition}
We call the matrices $\{S_{k}\}_{k\in\bbZ}$ the Stokes matrices associated with the sectorial fundamental solution matrices $\{\calV^{(k)}(x)\}_{k\in\bbZ}$.
\end{definition}
Combining Theorem~\ref{thm:asymptoticexpansion} and the uniqueness of the fundamental solution matrix admitting the prescribed asymptotic expansion (cf. Remark~\ref{rem:uniqueness}), we have the following.
\begin{lemma}\label{lem:Stokespreiod}
	\begin{equation}\label{eq:Stokespreiod}
	e^{2\pi\sqrt{-1}\Lambda}S_{k+2}=S_{k}e^{2\pi\sqrt{-1}\Lambda},
	\quad k\in\bbZ.
	\end{equation}
\end{lemma}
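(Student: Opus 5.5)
The plan is to compare $\calV^{(k+2)}$ with the analytic continuation of $\calV^{(k)}$ along the rotation $x\mapsto xe^{2\pi\sqrt{-1}}$. Throughout, the sectors $\Sigma_k$ live on the Riemann surface of the logarithm. Since $\Sigma_{k+2}=e^{2\pi\sqrt{-1}}\Sigma_k$, we can define on $\Sigma_{k+2}$
\[
  W^{(k+2)}(x):=\calV^{(k)}(xe^{-2\pi\sqrt{-1}}).
\]
Because the coefficient matrix $T+A/x$ is single-valued in $x$, $W^{(k+2)}$ is again a fundamental solution matrix of \eqref{eq:cDFeq} on $\Sigma_{k+2}$.

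Next I read off its asymptotics from Theorem~\ref{thm:asymptoticexpansion}. The series $\widehat F$ is single-valued and $e^{Tx}$ is unchanged by the rotation. The only change is
\[
  (xe^{-2\pi\sqrt{-1}})^{\Lambda}=x^{\Lambda}e^{-2\pi\sqrt{-1}\Lambda}.
\]
Hence $W^{(k+2)}(x)$ has the asymptotic expansion $\widehat F(x)x^{\Lambda}e^{-2\pi\sqrt{-1}\Lambda}e^{Tx}C$, uniformly on proper closed subsectors of $\Sigma_{k+2}$. All the matrices $e^{-2\pi\sqrt{-1}\Lambda}$, $e^{Tx}$, $x^{\Lambda}$ and $C$ are diagonal and commute, so this expansion equals $\widehat{\calV}(x)e^{-2\pi\sqrt{-1}\Lambda}$.

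Therefore $W^{(k+2)}(x)e^{2\pi\sqrt{-1}\Lambda}$ is a fundamental solution matrix on $\Sigma_{k+2}$ with asymptotic expansion $\widehat{\calV}(x)$. By Remark~\ref{rem:uniqueness}, which remains valid for the continued solutions under \eqref{eq:assumpStokes}, this gives
\[
  \calV^{(k+2)}(x)=\calV^{(k)}(xe^{-2\pi\sqrt{-1}})\,e^{2\pi\sqrt{-1}\Lambda}
\]
for all $k$. Applying the same identity with $k+1$ in place of $k$ gives the analogous formula for $\calV^{(k+3)}$.

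Now I substitute both identities into the definition \eqref{eq:DefStokes} of $S_{k+2}$ on $\Pi_{k+2}=e^{2\pi\sqrt{-1}}\Pi_k$. Writing $y=xe^{-2\pi\sqrt{-1}}\in\Pi_k$, we get
\[
  \calV^{(k+1)}(y)e^{2\pi\sqrt{-1}\Lambda}=\calV^{(k)}(y)e^{2\pi\sqrt{-1}\Lambda}S_{k+2}.
\]
The left-hand side equals $\calV^{(k)}(y)S_ke^{2\pi\sqrt{-1}\Lambda}$ by \eqref{eq:DefStokes}. Since $\calV^{(k)}(y)$ is invertible, cancelling it yields \eqref{eq:Stokespreiod}.

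The main point needing care is the uniqueness step. It relies on $\Sigma_{k+2}$ having opening $2\pi$, so that for each pair $i\ne j$ it contains a ray on which $e^{(T_{ii}-T_{jj})x}$ grows; this is the argument behind Remark~\ref{rem:uniqueness}. I also need the branch of $x^\Lambda$ on the rotated sector to be the one obtained by continuation along the log surface, which holds by construction.
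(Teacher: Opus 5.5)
Your proof is correct and follows the paper's route: you obtain $\calV^{(k+2)}(x)=\calV^{(k)}(xe^{-2\pi\sqrt{-1}})e^{2\pi\sqrt{-1}\Lambda}$ from the uniqueness in Remark~\ref{rem:uniqueness}, write $\calV^{(k+3)}$ in two ways on $\Pi_{k+2}$, and cancel the invertible factor $\calV^{(k)}(xe^{-2\pi\sqrt{-1}})$. The only difference is that you spell out why the rotated solution has the expansion $\widehat{\calV}(x)e^{-2\pi\sqrt{-1}\Lambda}$: $\widehat F$ and $e^{Tx}$ are single-valued, $(xe^{-2\pi\sqrt{-1}})^{\Lambda}=x^{\Lambda}e^{-2\pi\sqrt{-1}\Lambda}$, and the diagonal matrices commute. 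The paper leaves this step implicit.
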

\begin{proof}
From Theorem~\ref{thm:asymptoticexpansion} and the uniqueness in Remark~\ref{rem:uniqueness}, it follows that
	\begin{equation}\label{eq:kperiod}
	\calV^{(k+2)}(x)=\calV^{(k)}(xe^{-2\pi\sqrt{-1}})e^{2\pi\sqrt{-1}\Lambda}, 
	\quad x\in\Sigma_{k+2} 
	\end{equation}
for any $k\in\bbZ$. 
Indeed, the right-hand side has the same asymptotic expansion $\widehat{\calV}(x)$ as the left-hand side on $\Sigma_{k+2}$.
Now we take $x\in\Pi_{k+2}$.  
Then, noting $\Pi_{k+2}=\Sigma_{k+2}\cap\Sigma_{k+3}$ and $xe^{-2\pi\sqrt{-1}}\in\Pi_{k}$, we have
	\begin{equation}
	\calV^{(k+3)}(x)=\calV^{(k+1)}(xe^{-2\pi\sqrt{-1}})e^{2\pi\sqrt{-1}\Lambda}=\calV^{(k)}(xe^{-2\pi\sqrt{-1}})S_{k}e^{2\pi\sqrt{-1}\Lambda}.
	\end{equation}
On the other hand, by the definition of $S_{k+2}$ and the above identity,
	\begin{equation}
	\calV^{(k+3)}(x)=\calV^{(k+2)}(x)S_{k+2}=\calV^{(k)}(xe^{-2\pi\sqrt{-1}})e^{2\pi\sqrt{-1}\Lambda}S_{k+2}. 
	\end{equation}
Since $\calV^{(k)}(xe^{-2\pi\sqrt{-1}})$ is invertible, we obtain $S_k e^{2\pi\sqrt{-1}\Lambda}=e^{2\pi\sqrt{-1}\Lambda}S_{k+2}$. 
\end{proof}
Thanks to this lemma, the derivation of the full collection of the Stokes matrices $\{S_{k}\}_{k\in\bbZ}$ is reduced to the derivation of two matrices $S_{-1}$ and $S_{0}$. 
\begin{theorem}\label{thm:StokesMat}
	Assume \eqref{eq:assumpStokes}.
	For each $k\in\bbZ$, let $\Sigma_{k}$ be the sector defined by \eqref{eq:Sigma} and $\calV^{(k)}(x)$ be the fundamental solution matrix on $\Sigma_{k}$, defined by the meromorphic continuation of \eqref{eq:calV} in $(a,b,g)$. 
	Then, the Stokes matrices $\{S_{k}\}_{k\in\bbZ}$ associated with $\{\calV^{(k)}(x)\}_{k\in\bbZ}$ are determined by 
	\begingroup\small
	\begin{align}
	&S_{-1}=\begin{pmatrix}
	1 & 0 & 0 \\
	1-e(-2b) & 1 & 0 \\
	\left(1-e(-2b))(1-e(-2b-g)\right) & \left(1-e(-2b-g)\right)\left(1+e(g)\right) & 1
	\end{pmatrix}, \label{eq:MainStokes1}
	\\
	&S_{0}=\begin{pmatrix}
	 1 & -\left(1-e(-2a-2b-g)\right)\left(1+e(g)\right) &
  e(-g) \left(1-e(-2a-2b-g)\right)\left(1-e(-2a-2b-2g)\right) \\
 	0 & 1 & -e(-g) \left(1-e(-2a-2b-2g)\right) \\
 	0 & 0 & 1 
	\end{pmatrix}\label{eq:MainStokes}
	\end{align}
\endgroup
together with the recurrence relation \eqref{eq:Stokespreiod}.
Here $e(\alpha)=e^{\pi \sqrt{-1} \alpha}$. 
\end{theorem}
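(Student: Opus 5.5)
The plan is to compute $S_0$ and $S_{-1}$ directly from the integral representations by contour deformation in the spirit of Haraoka \cite{Haraoka2024}. The recurrence \eqref{eq:Stokespreiod} then produces every other $S_k$. We work first under \eqref{eq:assumpconv}, where all integrals converge absolutely. Since both sides of the resulting identities are meromorphic in $(a,b,g)$, the formulas extend to all parameters satisfying \eqref{eq:assumpStokes}.

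\textbf{Computing $S_0$.} Take $x\in\Pi_0$, so that $-\pi/2<\arg x<\pi/2$. The admissible directions for $\Sigma_0$ lie in $\theta\in(-\pi,0)$, and those for $\Sigma_1$ lie in $\theta'\in(-2\pi,-\pi)$. For $x$ near the positive real axis the natural choices are $\theta=-\pi+\epsilon$ and $\theta'=-\pi-\epsilon$: the two rays approach the negative real $t$-axis from below and from above. Each cycle $\Delta^{\theta'}_{ij}$ is then rewritten as a combination of the cycles $\Delta^{\theta}_{00},\Delta^{\theta}_{10},\Delta^{\theta}_{11}$, keeping careful track of the branch factors that arise when a ray crosses the cut of $t^{-(a+b+g)}$ on the negative axis and when it crosses the segment $[0,1]$ or the ray from $1$ to $\infty$.

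\begin{itemize}
\item For $\Delta_{00}$, swinging both variables across the negative axis leaves the cycle unchanged in homology except for the phase of $t_l^{-(a+b+g)}$ and $(t_1-t_2)^g$. This gives $V^{(1)}_{00}=V^{(0)}_{00}$, consistent with the first column of $S_0$.
\item For $\Delta_{10}$, the $t_1$-ray starting at $1$ must be deformed through the negative axis. It decomposes into the segment $(1\to0)$, a loop around $0$, and a ray from $0$. Re-expressing this in the basis produces $V^{(0)}_{10}$ plus a multiple of $V^{(0)}_{00}$. Because of the symmetrization over $t_1\leftrightarrow t_2$ contributions and the phases of $(t_1-t_2)^g$, the coefficient should come out as $-(1-e(-2a-2b-g))(1+e(g))$.
\item For $\Delta_{11}$, the same deformation is applied to both variables. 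The coefficients of $V_{10}$ and $V_{00}$ come from the ordered decomposition of the simplex $\{t_2 \text{ between } 1 \text{ and } t_1\}$ into pieces with $0$ or $1$ variables on the ray from $0$.
\end{itemize}

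Upper triangularity of $S_0$ is also forced independently by asymptotics. On $\Pi_0$, $e^{x}$ dominates $1$ and $e^{2x}$ dominates $e^{x}$, so the uniqueness in Remark~\ref{rem:uniqueness} on the overlap only allows multiples of subdominant solutions to be added. The diagonal entries are $1$ for the same reason. This structural constraint will be used to check the contour computation.

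\textbf{Computing $S_{-1}$.} Take $x\in\Pi_{-1}$, i.e. $\arg x$ near $-\pi$. Here the relevant rays are near $\theta=0$, approaching the positive real axis from either side, and they pass through the point $1$. The triangularity reverses: $S_{-1}$ is lower triangular, since now $1$ dominates $e^{x}$, which dominates $e^{2x}$. The entries come from rays $(t_1:0\to\infty)$ passing above or below $t=1$. Each such difference equals $(1-e(\mp2b))$ times a ray from $1$, which is the origin of the factors $1-e(-2b)$ and $1-e(-2b-g)$. The factor $1+e(g)$ again arises from the two orderings of $t_1,t_2$ on the ray from $1$.

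\textbf{Main obstacle.} The main difficulty is the two-dimensional bookkeeping for the cycles $\Delta_{00}$ and $\Delta_{11}$, which are simplices with $t_2$ between the base point and $t_1$. Decomposing them requires splitting the region into ordered chambers and tracking the phase of $(t_1-t_2)^g$ when the variables exchange order. My first attempt would be to use the symmetry of $\Phi\,\vec\varphi$ under $t_1\leftrightarrow t_2$, up to the factor $e(\pm g)$, to reduce simplex integrals to half of product integrals: $\Delta_{00}$-type cycles would be written as $\frac{1}{1+e(g)}$ times the square of a ray. Product cycles deform factorwise like one-variable Kummer integrals, and the resulting expressions can be re-symmetrized. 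Any resulting denominators $1+e(g)$ must cancel, and this cancellation will serve as a check on the final entries.
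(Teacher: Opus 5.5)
Your strategy is the paper's: rotate the admissible angle between adjacent sectors, let the rays snag, re-express the snagged chains in the basis $\Delta^{\theta}_{ij}$ by comparing branches, and continue meromorphically from \eqref{eq:assumpconv}. Deriving that $S_0$ is upper and $S_{-1}$ lower triangular with unit diagonal, from dominance on $\Pi_0$ and $\Pi_{-1}$, is a correct extra check that the paper does not state.

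\textbf{The gap.} None of the off-diagonal entries is actually derived; they are asserted (``should come out as''). The step that produces them is cutting the snagged $2$-chain along $t_1=t_2$ and identifying the pieces, and that is exactly what you defer to your ``main obstacle''. Here is how the step goes for $(S_0)_{12}$.
\begin{itemize}
\item First rotate the $t_2$-ray to $\theta_0$; it meets no obstruction.
\item The $t_1$-ray from $1$ then snags not only on $t_1=0$ but on the whole ray $(0\to\infty e^{\sqrt{-1}\theta_0})$ carrying $t_2$. Hence $(t_1:1\to\infty e^{\sqrt{-1}\theta_1})=(t_1:1\to\infty e^{\sqrt{-1}\theta_0})+\mathcal{L}$, where $\mathcal{L}$ is a hairpin around that ray.
\item One edge of $\mathcal{L}$ runs towards $0$ with the standard branches. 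The other runs away from $0$ after one positive turn about $t_1=0$, which gives an extra factor $e(-2a-2b-2g)$.
\item Cut both edges at $t_1=t_2$. The pieces with $|t_1|>|t_2|$ are $\Delta^{\theta_0}_{00}$ by Fubini.
\item The pieces with $|t_1|<|t_2|$ become $\Delta^{\theta_0}_{00}$ only after exchanging the names of the variables, using the symmetry of $\vec{\varphi}$. Each half-turn about $t_2$ shifts $\arg(t_1-t_2)$ by $\pi$, i.e.\ contributes a factor $e(g)$. This is the analogue of Lemma~\ref{lem:DeltaV}, the genuinely two-dimensional ingredient.
\item The four contributions are $-1,\ -e(g),\ e(-2a-2b-g),\ e(-2a-2b)$. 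They sum to $-(1-e(-2a-2b-g))(1+e(g))$.
\end{itemize}
The same cut-and-relabel computation is needed for $\Delta_{11}$ in $S_0$, and for $\Delta_{10}$ and $\Delta_{00}$ in $S_{-1}$ (the paper's five- and seven-piece decompositions). Without it, the proposal establishes only the triangular shape of $S_{-1}$ and $S_0$, not their entries.

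\textbf{Your shortcut does not avoid this work.}
\begin{itemize}
\item Writing a simplex cycle as $(1+e(\pm g))^{-1}$ times a product of two rays requires displacing one ray so that the product is a twisted cycle.
\item The two factors can be deformed independently only while they stay disjoint. When both snag on the same point, the two hairpins must be nested, and their product again has to be cut along $t_1=t_2$. This happens for both rays from $0$ catching on $t=1$ in $S_{-1}$, and for both rays from $1$ catching on $t=0$ in $S_0$.
\item The shortcut also introduces denominators $1+e(\pm g)$. These vanish at odd $g$, which \eqref{eq:assumpconv} allows, so an additional continuity argument is needed. The paper's direct decomposition has no denominators.
\end{itemize}

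\textbf{Two smaller corrections.}
\begin{itemize}
\item The branches \eqref{eq:branch1}--\eqref{eq:branch3} move with $\theta$, so there is no cut on the negative axis and no phase is picked up by crossing it. That is why $V^{(1)}_{00}=V^{(0)}_{00}$ holds exactly, not ``up to a phase''. Phases arise only from winding around $0$, $1$, or the diagonal during snagging.
\item In your $\Delta_{11}$ bullet, pieces with exactly one variable on the ray from $0$ give $V_{10}$, and pieces with \emph{both} variables there give $V_{00}$. Pieces with none give $V_{11}$.
\end{itemize}
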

We remark that the Stokes matrices associated with the normalized fundamental solution matrices $\{\calY^{(k)}(x)\}_{k\in\bbZ}$ defined by \eqref{eq:calY} are given by $\{CS_kC^{-1}\}_{k\in\bbZ}$.
The proof of this theorem will be given in the next subsection.
Before that, we give an explicit description of the monodromy representation of \eqref{eq:cDFeq} associated with the fundamental solution matrix $\calV^{(0)}(x)$.
Set $X:=\bbP^{1}\setminus\{0,\infty\}$.
We fix a base point $x_0\in X$ and choose the branch of $\arg x_0$ so that $x_0$ is regarded as a point of $\Sigma_{0}$. 
Denote by $\gamma_0$ the homotopy class in $\pi_1(X,x_0)$ of a positively oriented simple loop based at $x_0$ encircling $x=0$. 
We set $\gamma_{\infty}:=\gamma_{0}^{-1}$, which gives the homotopy class of a positively oriented simple loop based at $x_0$ encircling $x=\infty$. 
We denote by $\gamma\left[\calV^{(0)}(x)\right]$ the analytic continuation of $\calV^{(0)}(x)$ along the loop $\gamma\in\pi_{1}(X,x_0)$. 
Then, we obtain
	\begin{equation}
	\gamma_{\infty}\left[\calV^{(0)}(x)\right]=\calV^{(0)}(x)S_0 e^{-2\pi\sqrt{-1}\Lambda}S_{-1} 
	\end{equation}
from the definition of the Stokes matrices \eqref{eq:DefStokes} and the recurrence relation \eqref{eq:Stokespreiod}. 
Hence, by setting $M_{\infty}=S_0 e^{-2\pi\sqrt{-1}\Lambda}S_{-1}$ and $M_{0}=M_{\infty}^{-1}$, we have the monodromy representation
	\begin{equation}
	\begin{array}{cccc}
	\rho: & \pi_1(X,x_0) & \to & \mathrm{GL}(3,\bbC) \\
	& \gamma_{0} & \mapsto & M_0 \\ 
	& \gamma_{\infty} & \mapsto & M_\infty \\
	\end{array}.
	\end{equation}
\begin{remark}
To obtain the complete monodromy data, we have to derive the \emph{central connection matrices} $\{C_{k}\}_{k\in\bbZ}$ such that
	\begin{equation}
	\calV^{(k)}(x)=\calV_{0}(x)C_{k} \quad \text{on~$\Sigma_{k}\cap \{x\in\mathbb{C}\mid |x|<r\}$}
	\end{equation}
for some $r>0$. 
Here $\calV_{0}(x)$ denotes a fundamental solution matrix around $x=0$: 
if $a,a+g,2a+g\notin\mathbb{Z}$, it is given by
	\begin{equation}
	\calV_{0}(x)=G(x)x^{D},\quad G(x)=G_0+\sum_{m=1}^{\infty}G_{m}x^m,
	\end{equation} 
where $D:=\operatorname{diag}[0,a,2a+g]$ and $G_0\in\mathrm{GL}(3,\bbC)$ satisfy $G_0^{-1}AG_0=D$. 
This problem will be discussed in a subsequent work.
\end{remark}

\subsection{Derivation of the Stokes matrices}

Let us prove Theorem~\ref{thm:StokesMat}. 
We first show the statement under the assumption \eqref{eq:assumpconv}, using the integral representations.
We follow Haraoka's contour-deformation method for the confluent $F_4$ system \cite{Haraoka2024}, adapting it to the integral representation of the confluent Dotsenko--Fateev system. 
More precisely, the integration cycles and their deformations closely parallel those used in Haraoka's work.

We derive only $S_{-1}$ since $S_0$ can be derived in the same way. 
To derive $S_{-1}$, we shall consider expressing $V_{ij}^{(0)}(x)$ ($ij\in\{00,10,11\}$) as a linear combination of $V_{00}^{(-1)}(x)$, $V_{10}^{(-1)}(x)$ and $V_{11}^{(-1)}(x)$ on $\Pi_{-1}$. 
For $x\in\Pi_{-1}$, let $\theta_{-1}\in(0,\pi)$ and $\theta_{0}\in(-\pi,0)$ be angles corresponding to the cycles of $V_{ij}^{(-1)}(x)$ and $V_{ij}^{(0)}(x)$ on $\Pi_{-1}$, respectively (e.g. Figure \ref{fig:Delta10}). 
\begin{figure}[!htbp]
\[
\left(\!\!\!
\vcenter{\hbox{%
\tikzset{every picture/.style={line width=0.75pt}} 

\begin{tikzpicture}[x=0.75pt,y=0.75pt,yscale=0.8,xscale=0.8]
\path[use as bounding box] (245,-30) rectangle (420,-120);

\filldraw[fill=white,draw=black] (296.5,-60) circle (2pt);
\filldraw[fill=white,draw=black] (352,-60) circle (2pt);

\draw[
  decoration={markings, mark=at position 0.5 with {\arrow[scale=1.2]{stealth}}},
  postaction={decorate}
] (352.5,-62.5) -- (405.5,-115);

\draw[dash pattern={on 0.84pt off 2.51pt}]
  (410,-60.5) -- (356,-60.5);

\draw (268.5,-38) -- (268.5,-59) -- (249,-59);

\draw[line width=0.5pt] (340,-59) ++(-3.6:25pt)
  arc[start angle=0, end angle=-36, radius=22pt];

\draw (290.5,-38) node[anchor=north west,inner sep=0.75pt] {$0$};
\draw (345.5,-38) node[anchor=north west,inner sep=0.75pt] {$1$};
\draw (250,-38) node[anchor=north west,inner sep=0.75pt] {$t_{1}$};
\draw (377,-65) node[anchor=north west,inner sep=0.75pt] {$\theta_{0}$};
\end{tikzpicture}
}}
\right)
\times
\left(\!\!\!
\vcenter{\hbox{%
\tikzset{every picture/.style={line width=0.75pt}} 

\begin{tikzpicture}[x=0.75pt,y=0.75pt,yscale=0.8,xscale=0.8]
\path[use as bounding box] (245,-30) rectangle (400,-120);

\filldraw[fill=white,draw=black] (296.5,-60) circle (2pt);
\filldraw[fill=white,draw=black] (352,-60) circle (2pt);

\draw[
  decoration={markings, mark=at position 0.5 with {\arrow[scale=1.2]{stealth}}},
  postaction={decorate}
] (297,-62.5) -- (350,-115);

\draw[dash pattern={on 0.84pt off 2.51pt}]
  (325,-60.5) -- (298,-60.5);

\draw (268.5,-38) -- (268.5,-59) -- (249,-59);

\draw[line width=0.5pt] (340-352+296.5,-59) ++(-3.6:25pt)
  arc[start angle=0, end angle=-36, radius=22pt];

\draw (290.5,-38) node[anchor=north west,inner sep=0.75pt] {$0$};
\draw (345.5,-38) node[anchor=north west,inner sep=0.75pt] {$1$};
\draw (250,-38) node[anchor=north west,inner sep=0.75pt] {$t_{2}$};
\draw (377-352+296.5,-65)
  node[anchor=north west,inner sep=0.75pt] {$\theta_{0}$};
\end{tikzpicture}
}}
\right)
\]
\caption{$\Delta_{10}^{\theta_0}$}
\label{fig:Delta10}
\end{figure}

Our strategy is as follows.
We continuously rotate the angle from $\theta_0$ to $\theta_{-1}$, which induces the deformation of domains $\Delta_{ij}^{\theta_{0}}$.
The resulting deformed domain can be expressed in terms of $\Delta_{ij}^{\theta_{-1}}$;  this expression gives linear combinations of $V_{ij}^{(-1)}(x)$ with coefficients arising from the differences between the branches of the integrand $\Phi(t_1,t_2;x)$ on $\Delta_{ij}^{\theta_{-1}}$ and those on the deformed domains.

On the domain $\Delta_{11}^{\theta_{0}}$, the resulting deformed domain is the same as $\Delta_{11}^{\theta_{-1}}$ (Figure \ref{fig:deform11}). 
\begin{figure}[!htbp]
\[
\begin{aligned}
&\left(\!\!\!
\vcenter{\hbox{%
\tikzset{every picture/.style={line width=0.75pt}} 

\begin{tikzpicture}[x=0.75pt,y=0.75pt,yscale=0.8,xscale=0.8]
\path[use as bounding box] (245,-30) rectangle (420,-120);

\filldraw[fill=white,draw=black] (296.5,-60) circle (2pt);
\filldraw[fill=white,draw=black] (352,-60) circle (2pt);

\draw[
  decoration={markings, mark=at position 0.5 with {\arrow[scale=1.2]{stealth}}},
  postaction={decorate}
] (352.5,-62.5) -- (405.5,-115);

\draw[dash pattern={on 0.84pt off 2.51pt}]
  (410,-60.5) -- (356,-60.5);

\draw (268.5,-38) -- (268.5,-59) -- (249,-59);

\draw[line width=0.5pt] (340,-59) ++(-3.6:25pt)
  arc[start angle=0, end angle=-36, radius=22pt];

\draw (290.5,-38) node[anchor=north west,inner sep=0.75pt] {$0$};
\draw (345.5,-38) node[anchor=north west,inner sep=0.75pt] {$1$};
\draw (250,-38) node[anchor=north west,inner sep=0.75pt] {$t_{1}$};
\draw (377,-65) node[anchor=north west,inner sep=0.75pt] {$\theta_{0}$};
\end{tikzpicture}
}}
\right)
\times
\left(\!\!\!
\vcenter{\hbox{%
\tikzset{every picture/.style={line width=0.75pt}} 

\begin{tikzpicture}[x=0.75pt,y=0.75pt,yscale=0.8,xscale=0.8]
\path[use as bounding box] (245,-30) rectangle (400,-120);

\filldraw[fill=white,draw=black] (296.5,-60) circle (2pt);
\filldraw[fill=white,draw=black] (352,-60) circle (2pt);

\draw[
  decoration={markings, mark=at position 0.5 with {\arrow[scale=1.2]{stealth}}},
  postaction={decorate}
] (352.5,-62.5) -- (390.35,-100);

\draw[dash pattern={on 0.84pt off 2.51pt}]
  (410,-60.5) -- (356,-60.5);

\draw (268.5,-38) -- (268.5,-59) -- (249,-59);

\draw[line width=0.5pt] (340,-59) ++(-3.6:25pt)
  arc[start angle=0, end angle=-36, radius=22pt];

\draw (290.5,-38) node[anchor=north west,inner sep=0.75pt] {$0$};
\draw (345.5,-38) node[anchor=north west,inner sep=0.75pt] {$1$};
\draw (250,-38) node[anchor=north west,inner sep=0.75pt] {$t_{2}$};
\draw (377,-65) node[anchor=north west,inner sep=0.75pt] {$\theta_{0}$};

\draw (396,-105) node[below,inner sep=0.75pt] {$t_1$};
\filldraw[fill=black,draw=black] (390.35,-100) circle (1pt);
\end{tikzpicture}
\quad }}
\right)
\\
&\longrightarrow\quad 
\left(\!\!\!
\vcenter{\hbox{%
\tikzset{every picture/.style={line width=0.75pt}} 

\begin{tikzpicture}[x=0.75pt,y=0.75pt,yscale=0.8,xscale=0.8]
\path[use as bounding box] (245,-30) rectangle (420,-120);

\filldraw[fill=white,draw=black] (296.5,-90) circle (2pt);
\filldraw[fill=white,draw=black] (352,-90) circle (2pt);

\draw[
  decoration={markings, mark=at position 0.6 with {\arrow[scale=1.2]{stealth}}},
  postaction={decorate}
] (353,-88) -- (405.5,-40);

\draw (268.5,-38) -- (268.5,-59) -- (249,-59);

\draw[dash pattern={on 0.84pt off 2.51pt}]
  (410,-90) -- (355,-90);

\draw[line width=0.5pt] (352,-89) ++(-3.6:20pt)
  arc[start angle=0, end angle=42, radius=22pt];

\draw (290.5,-98) node[anchor=north west,inner sep=0.75pt] {$0$};
\draw (346.5,-98) node[anchor=north west,inner sep=0.75pt] {$1$};
\draw (250,-38) node[anchor=north west,inner sep=0.75pt] {$t_{1}$};
\draw (383,-67) node[anchor=north west,inner sep=0.75pt] {$\theta_{-1}$};
\end{tikzpicture}
}}
\right)
\times
\left(\!\!\!
\vcenter{\hbox{%
\tikzset{every picture/.style={line width=0.75pt}} 

\begin{tikzpicture}[x=0.75pt,y=0.75pt,yscale=0.8,xscale=0.8]
\path[use as bounding box] (245,-30) rectangle (400,-120);

\filldraw[fill=white,draw=black] (296.5,-90) circle (2pt);
\filldraw[fill=white,draw=black] (352,-90) circle (2pt);

\draw[dash pattern={on 0.84pt off 2.51pt}]
  (410,-90) -- (355,-90);

\draw[line width=0.5pt] (352,-89) ++(-3.6:20pt)
  arc[start angle=0, end angle=42, radius=22pt];

\draw[
  decoration={markings, mark=at position 0.6 with {\arrow[scale=1.2]{stealth}}},
  postaction={decorate}
] (353,-88) -- (384.5,-59.2);

\draw (268.5,-38) -- (268.5,-59) -- (249,-59);

\draw (290.5,-98) node[anchor=north west,inner sep=0.75pt] {$0$};
\draw (346.5,-98) node[anchor=north west,inner sep=0.75pt] {$1$};
\draw (250,-38) node[anchor=north west,inner sep=0.75pt] {$t_{2}$};
\draw (383,-67) node[anchor=north west,inner sep=0.75pt] {$\theta_{-1}$};

\draw (384.5,-55) node[above,inner sep=0.75pt] {$t_1$};
\filldraw[fill=black,draw=black] (384.5,-59.2) circle (1pt);
\end{tikzpicture}
~ }}
\right)
\end{aligned}
\]
\caption{Deformation of $\Delta_{11}^{\theta_{0}}$ associated with the rotation
	}
\label{fig:deform11}
\end{figure}
By chasing the change of branch of $\Phi(t_1,t_2;x)$, we see that the branch on the resulting deformed domain is the same as the one defined by \eqref{eq:branch3}.
Therefore we have
	\begin{equation}\label{eq:StokesV11}
	V_{11}^{(0)}(x)=V_{11}^{(-1)}(x).
	\end{equation}
\FloatBarrier
Next we consider the deformation of $\Delta_{10}^{\theta_{0}}$. 
In the $t_1$-plane, there is no obstruction to rotating the ray, and we obtain $(t_1:1\to\infty e^{\sqrt{-1}\theta_{-1}})$. 
On the other hand, when we rotate the ray $(t_2:0\to\infty e^{\sqrt{-1}\theta_{0}})$ in the $t_2$-plane, it gets caught on $t_2=1$ (Figure \ref{fig:deform1}).

\begin{figure}[!htbp]
\[
\begin{aligned}
&
\left(\!\!\!
\vcenter{\hbox{%
\tikzset{every picture/.style={line width=0.75pt}} 

\begin{tikzpicture}[x=0.75pt,y=0.75pt,yscale=0.8,xscale=0.8]
\path[use as bounding box] (245,-30) rectangle (420,-120);

\filldraw[fill=white,draw=black] (296.5,-60) circle (2pt);
\filldraw[fill=white,draw=black] (352,-60) circle (2pt);

\draw[
  decoration={markings, mark=at position 0.5 with {\arrow[scale=1.2]{stealth}}},
  postaction={decorate}
] (352.5,-62.5) -- (405.5,-115);

\draw[dash pattern={on 0.84pt off 2.51pt}]
  (410,-60.5) -- (356,-60.5);

\draw (268.5,-38) -- (268.5,-59) -- (249,-59);

\draw[line width=0.5pt] (340,-59) ++(-3.6:25pt)
  arc[start angle=0, end angle=-36, radius=22pt];

\draw (290.5,-38) node[anchor=north west,inner sep=0.75pt] {$0$};
\draw (345.5,-38) node[anchor=north west,inner sep=0.75pt] {$1$};
\draw (250,-38) node[anchor=north west,inner sep=0.75pt] {$t_{1}$};
\draw (377,-65) node[anchor=north west,inner sep=0.75pt] {$\theta_{0}$};
\end{tikzpicture}
}}
\right)
\times
\left(\!\!\!
\vcenter{\hbox{%
\tikzset{every picture/.style={line width=0.75pt}} 

\begin{tikzpicture}[x=0.75pt,y=0.75pt,yscale=0.8,xscale=0.8]
\path[use as bounding box] (245,-30) rectangle (400,-120);

\filldraw[fill=white,draw=black] (296.5,-60) circle (2pt);
\filldraw[fill=white,draw=black] (352,-60) circle (2pt);

\draw[
  decoration={markings, mark=at position 0.5 with {\arrow[scale=1.2]{stealth}}},
  postaction={decorate}
] (297,-62.5) -- (350,-115);

\draw[dash pattern={on 0.84pt off 2.51pt}]
  (325,-60.5) -- (298,-60.5);

\draw (268.5,-38) -- (268.5,-59) -- (249,-59);

\draw[line width=0.5pt] (340-352+296.5,-59) ++(-3.6:25pt)
  arc[start angle=0, end angle=-36, radius=22pt];

\draw (290.5,-38) node[anchor=north west,inner sep=0.75pt] {$0$};
\draw (345.5,-38) node[anchor=north west,inner sep=0.75pt] {$1$};
\draw (250,-38) node[anchor=north west,inner sep=0.75pt] {$t_{2}$};
\draw (377-352+296.5,-65)
  node[anchor=north west,inner sep=0.75pt] {$\theta_{0}$};
\end{tikzpicture}
}}
\right)
\\
&\longrightarrow\quad 
\left(\!\!\!
\vcenter{\hbox{%
\tikzset{every picture/.style={line width=0.75pt}} 

\begin{tikzpicture}[x=0.75pt,y=0.75pt,yscale=0.8,xscale=0.8]
\path[use as bounding box] (245,-30) rectangle (420,-120);

\filldraw[fill=white,draw=black] (296.5,-90) circle (2pt);
\filldraw[fill=white,draw=black] (352,-90) circle (2pt);

\draw[
  decoration={markings, mark=at position 0.6 with {\arrow[scale=1.2]{stealth}}},
  postaction={decorate}
] (353,-88) -- (405.5,-40);

\draw (268.5,-38) -- (268.5,-59) -- (249,-59);

\draw[dash pattern={on 0.84pt off 2.51pt}]
  (410,-90) -- (355,-90);

\draw[line width=0.5pt] (352,-89) ++(-3.6:20pt)
  arc[start angle=0, end angle=42, radius=22pt];

\draw (290.5,-98) node[anchor=north west,inner sep=0.75pt] {$0$};
\draw (346.5,-98) node[anchor=north west,inner sep=0.75pt] {$1$};
\draw (250,-38) node[anchor=north west,inner sep=0.75pt] {$t_{1}$};
\draw (383,-67) node[anchor=north west,inner sep=0.75pt] {$\theta_{-1}$};
\end{tikzpicture}
}}
\right)
\times
\left(\!\!\!
\vcenter{\hbox{%
\tikzset{every picture/.style={line width=0.75pt}} 

\begin{tikzpicture}[x=0.75pt,y=0.75pt,yscale=0.8,xscale=0.8]
\path[use as bounding box] (245,-30) rectangle (400,-120);

\filldraw[fill=white,draw=black] (296.5,-90) circle (2pt);
\filldraw[fill=white,draw=black] (352,-90) circle (2pt);

\draw (298.5,-90) -- (343,-90);

\draw (352,-89) ++(-3.6:7pt)
  arc[start angle=360, end angle=180, radius=7pt];

\draw[
  decoration={markings, mark=at position 0.6 with {\arrow[scale=1.2]{stealth}}},
  postaction={decorate}
] (361,-90) -- (413.5,-38);

\draw[dash pattern={on 0.84pt off 2.51pt}]
  (410,-90) -- (360,-90);

\draw[line width=0.5pt] (355,-89) ++(-3.6:20pt)
  arc[start angle=0, end angle=33, radius=22pt];

\draw (268.5,-38) -- (268.5,-59) -- (249,-59);

\draw (290.5,-70) node[anchor=north west,inner sep=0.75pt] {$0$};
\draw (346,-70) node[anchor=north west,inner sep=0.75pt] {$1$};
\draw (250,-38) node[anchor=north west,inner sep=0.75pt] {$t_{2}$};
\draw (386,-69) node[anchor=north west,inner sep=0.75pt] {$\theta_{-1}$};
\end{tikzpicture}
\quad }}
\right)
\end{aligned}
\]
\caption{Deformation of $\Delta_{10}^{\theta_{0}}$ associated with the rotation
	}
\label{fig:deform1}
\end{figure}
\noindent
We further deform the path in the $t_2$-plane as in Figure \ref{fig:deform2} and divide it into three parts: 
\begin{figure}[!htbp]
\[
\vcenter{\hbox{%
\tikzset{every picture/.style={line width=0.75pt}} 

\begin{tikzpicture}[x=0.75pt,y=0.75pt,yscale=1,xscale=1]

\filldraw[fill=white,draw=black] (296.5,-90) circle (2pt);
\filldraw[fill=white,draw=black] (352,-90) circle (2pt);


\draw (352,-89) ++(-3.6:7pt)
  arc[start angle=360, end angle=115, radius=7pt];

\draw[
  decoration={markings, mark=at position 0.6 with {\arrow[scale=1.2]{stealth}}},
  postaction={decorate}
] (361,-90) -- (413.5,-38);

\draw[
  decoration={markings, mark=at position 0.6 with {\arrow[scale=1.2]{stealth}}},
  postaction={decorate}
] (298,-87.5) -- (349,-38);

\draw[
  decoration={markings, mark=at position 0.6 with {\arrow[scale=1.2]{stealth}}},
  postaction={decorate}
] (392,-38) -- (348,-81);

\draw[dash pattern={on 0.30pt off 0.35pt}]
  (354,-87) -- (405.5,-35);

\draw[dash pattern={on 0.80pt off 2pt}]
  (349,-38) -- (392,-38);

\draw (268.5,-20) -- (268.5,-38) -- (249,-38);

\draw (291.5,-105) node[anchor=north west,inner sep=0.75pt] {$0$};
\draw (347.5,-105) node[anchor=north west,inner sep=0.75pt] {$1$};
\draw (250,-20) node[anchor=north west,inner sep=0.75pt] {$t_{2}$};

\draw (305,-50) node[anchor=north west,inner sep=0.75pt] {\small $L_1$};
\draw (348,-50) node[anchor=north west,inner sep=0.75pt] {\small $L_2$};
\draw (395,-60) node[anchor=north west,inner sep=0.75pt] {\small $L_3$};

\draw (400,-15) node[anchor=north west,inner sep=0.75pt]
  {$\infty e^{\sqrt{-1}\theta_{-1}}$};

\end{tikzpicture}
\quad }}
\]
\caption{Deformation of the path in the $t_2$-plane}
\label{fig:deform2}
\end{figure}
	\begin{equation}
	\begin{aligned}
	L_1: (t_2: 0\to \infty e^{\sqrt{-1}\theta_{-1}}), \quad L_2: (t_2: \infty e^{\sqrt{-1}\theta_{-1}} \to 1), \quad
	L_3: (t_2: 1\to \infty e^{\sqrt{-1}\theta_{-1}}).
	\end{aligned}
	\end{equation}
Here we note that there is a point where $t_1=t_2$ on the product sets $(t_1:1\to\infty e^{\sqrt{-1}\theta_{-1}})\times L_2$ and $(t_1:1\to\infty e^{\sqrt{-1}\theta_{-1}})\times L_3$. 
With this in mind, we further divide $L_2$ and $L_3$ as
	\begin{equation}
	\begin{aligned}
	L_2&=(t_2:\infty e^{\sqrt{-1}\theta_{-1}}\to t_1)+(t_2: t_1\to 1), \\
	L_3&=(t_2:1\to t_1)+(t_2:t_1\to \infty e^{\sqrt{-1}\theta_{-1}}).
	\end{aligned}
	\end{equation}
Consequently, we have 
	\begin{equation}\label{eq:V12345}
	V_{10}^{(0)}(x)=V_{\Delta_{\mathrm{I}}}(x)+V_{\Delta_{\mathrm{\two}}}(x)+V_{\Delta_{\mathrm{\three}}}(x)+V_{\Delta_{\mathrm{\four}}}(x)+V_{\Delta_{\mathrm{V}}}(x),
	\end{equation}
where
	\begin{align}
	\Delta_{\mathrm{I}}&=(t_1:1\to\infty e^{\sqrt{-1}\theta_{-1}})\times L_1, \\
	\Delta_{\mathrm{\two}}&=(t_1:1\to\infty e^{\sqrt{-1}\theta_{-1}})\times (t_2:\infty e^{\sqrt{-1}\theta_{-1}}\to t_1), \\
	\Delta_{\mathrm{\three}}&=(t_1:1\to\infty e^{\sqrt{-1}\theta_{-1}})\times (t_2: t_1\to 1), \\
	\Delta_{\mathrm{\four}}&=(t_1:1\to\infty e^{\sqrt{-1}\theta_{-1}})\times (t_2:1\to t_1), \\
	\Delta_{\mathrm{V}}&=(t_1:1\to\infty e^{\sqrt{-1}\theta_{-1}})\times (t_2:t_1\to \infty e^{\sqrt{-1}\theta_{-1}}). 
	\end{align}
\begin{figure}[!htbp]
\begin{equation}
\begin{aligned}
&\left(\!\!\!
\vcenter{\hbox{%
\tikzset{every picture/.style={line width=0.75pt}} 

\begin{tikzpicture}[x=0.75pt,y=0.75pt,yscale=0.7,xscale=0.7]
\path[use as bounding box] (245,-30) rectangle (420,-120);

\filldraw[fill=white,draw=black] (296.5,-90) circle (2pt);
\filldraw[fill=white,draw=black] (352,-90) circle (2pt);

\draw[
  decoration={markings, mark=at position 0.6 with {\arrow[scale=1.2]{stealth}}},
  postaction={decorate}
] (353,-88) -- (405.5,-40);

\draw (268.5,-38) -- (268.5,-59) -- (249,-59);

\draw (290.5,-98) node[anchor=north west,inner sep=0.75pt] {$0$};
\draw (346.5,-98) node[anchor=north west,inner sep=0.75pt] {$1$};
\draw (248,-38) node[anchor=north west,inner sep=0.75pt] {$t_{1}$};

\end{tikzpicture}
}}
\!\!\right)
\times
\left(\!\!\!
\vcenter{\hbox{%
\tikzset{every picture/.style={line width=0.75pt}} 

\begin{tikzpicture}[x=0.75pt,y=0.75pt,yscale=0.7,xscale=0.7]
\path[use as bounding box] (245,-30) rectangle (420,-120);

\filldraw[fill=white,draw=black] (296.5,-90) circle (2pt);
\filldraw[fill=white,draw=black] (352,-90) circle (2pt);

\draw (352,-89) ++(-3.6:7pt)
  arc[start angle=360, end angle=115, radius=7pt];

\draw[
  decoration={markings, mark=at position 0.6 with {\arrow[scale=1.2]{stealth}}},
  postaction={decorate}
] (361,-90) -- (413.5,-38);

\draw[
  decoration={markings, mark=at position 0.6 with {\arrow[scale=1.2]{stealth}}},
  postaction={decorate}
] (298,-87.5) -- (349,-38);

\draw[
  decoration={markings, mark=at position 0.6 with {\arrow[scale=1.2]{stealth}}},
  postaction={decorate}
] (392,-38) -- (348,-81);

\draw[dash pattern={on 0.30pt off 0.35pt}]
  (354,-87) -- (405.5,-35);

\draw[dash pattern={on 0.80pt off 2pt}]
  (349,-38) -- (392,-38);

\draw (268.5,-38) -- (268.5,-59) -- (249,-59);

\draw (291.5,-105) node[anchor=north west,inner sep=0.75pt] {$0$};
\draw (347.5,-105) node[anchor=north west,inner sep=0.75pt] {$1$};
\draw (248,-38) node[anchor=north west,inner sep=0.75pt] {$t_{2}$};

\end{tikzpicture}
}}
\!\!\right)
 \\
&=\left(\!\!\!
\vcenter{\hbox{%
\tikzset{every picture/.style={line width=0.75pt}} 

\begin{tikzpicture}[x=0.75pt,y=0.75pt,yscale=0.7,xscale=0.7]
\path[use as bounding box] (245,-30) rectangle (420,-120);

\filldraw[fill=white,draw=black] (296.5,-90) circle (2pt);
\filldraw[fill=white,draw=black] (352,-90) circle (2pt);

\draw[
  decoration={markings, mark=at position 0.6 with {\arrow[scale=1.2]{stealth}}},
  postaction={decorate}
] (353,-88) -- (405.5,-40);

\draw (268.5,-38) -- (268.5,-59) -- (249,-59);

\draw (290.5,-98) node[anchor=north west,inner sep=0.75pt] {$0$};
\draw (346.5,-98) node[anchor=north west,inner sep=0.75pt] {$1$};
\draw (248,-38) node[anchor=north west,inner sep=0.75pt] {$t_{1}$};

\end{tikzpicture}
}}
\!\!\right)
\times
\left\{
\left(\!\!\!
\vcenter{\hbox{%
\tikzset{every picture/.style={line width=0.75pt}} 
\begin{tikzpicture}[x=0.75pt,y=0.75pt,yscale=0.7,xscale=0.7]
\path[use as bounding box] (245,-30) rectangle (420,-120);

\filldraw[fill=white,draw=black] (296.5,-90) circle (2pt);
\filldraw[fill=white,draw=black] (352,-90) circle (2pt);

\draw[dash pattern={on 0.80pt off 1pt},opacity=0.5]
  (352,-89) ++(-3.6:7pt)
  arc[start angle=360, end angle=115, radius=7pt];

\draw[dash pattern={on 0.80pt off 1pt},opacity=0.5]
  (361,-90) -- (413.5,-38);

\draw[
  decoration={markings, mark=at position 0.6 with {\arrow[scale=1.2]{stealth}}},
  postaction={decorate}
] (298,-87.5) -- (349,-38);

\draw[dash pattern={on 0.80pt off 1pt},opacity=0.5]
  (392,-38) -- (348,-81);

\draw[dash pattern={on 0.30pt off 0.35pt}]
  (354,-87) -- (405.5,-35);

\draw[dash pattern={on 0.80pt off 1pt},opacity=0.5]
  (349,-38) -- (392,-38);

\draw (268.5,-38) -- (268.5,-59) -- (249,-59);

\draw (291.5,-105) node[anchor=north west,inner sep=0.75pt] {$0$};
\draw (347.5,-105) node[anchor=north west,inner sep=0.75pt] {$1$};
\draw (248,-38) node[anchor=north west,inner sep=0.75pt] {$t_{2}$};

\end{tikzpicture}
}}
\!\!\right)\right. 
+
\left(\!\!\!
\vcenter{\hbox{%
\tikzset{every picture/.style={line width=0.75pt}} 

\begin{tikzpicture}[x=0.75pt,y=0.75pt,yscale=0.7,xscale=0.7]
\path[use as bounding box] (245,-30) rectangle (420,-120);

\filldraw[fill=white,draw=black] (296.5,-90) circle (2pt);
\filldraw[fill=white,draw=black] (352,-90) circle (2pt);

\draw[dash pattern={on 0.80pt off 1pt},opacity=0.5]
  (352,-89) ++(-3.6:7pt)
  arc[start angle=360, end angle=115, radius=7pt];

\draw[dash pattern={on 0.80pt off 1pt},opacity=0.5]
  (361,-90) -- (413.5,-38);

\draw[dash pattern={on 0.80pt off 1pt},opacity=0.5]
  (298,-87.5) -- (349,-38);

\draw[
  decoration={markings, mark=at position 0.6 with {\arrow[scale=1.2]{stealth}}},
  postaction={decorate}
] (392,-38) -- (370,-59.5);

\draw[dash pattern={on 0.80pt off 1pt},opacity=0.5]
  (370,-59.5) -- (348,-81);

\filldraw[fill=black,draw=black] (370,-59.5) circle (1pt);
\draw (367,-48) node[anchor=north east,inner sep=0.75pt] {\small $t_{1}$};

\draw[dash pattern={on 0.30pt off 0.35pt}]
  (354,-87) -- (405.5,-35);

\draw[dash pattern={on 0.80pt off 1pt},opacity=0.5]
  (349,-38) -- (392,-38);

\draw (268.5,-38) -- (268.5,-59) -- (249,-59);

\draw (291.5,-105) node[anchor=north west,inner sep=0.75pt] {$0$};
\draw (347.5,-105) node[anchor=north west,inner sep=0.75pt] {$1$};
\draw (248,-38) node[anchor=north west,inner sep=0.75pt] {$t_{2}$};

\end{tikzpicture}
}}
\!\!\right) \\
&
+\left. \left(\!\!\!
\vcenter{\hbox{%
\tikzset{every picture/.style={line width=0.75pt}} 

\begin{tikzpicture}[x=0.75pt,y=0.75pt,yscale=0.7,xscale=0.7]
\path[use as bounding box] (245,-30) rectangle (420,-120);

\filldraw[fill=white,draw=black] (296.5,-90) circle (2pt);
\filldraw[fill=white,draw=black] (352,-90) circle (2pt);

\draw[dash pattern={on 0.80pt off 1pt},opacity=0.5]
  (352,-89) ++(-3.6:7pt)
  arc[start angle=360, end angle=115, radius=7pt];

\draw[dash pattern={on 0.80pt off 1pt},opacity=0.5]
  (361,-90) -- (413.5,-38);

\draw[dash pattern={on 0.80pt off 1pt},opacity=0.5]
  (298,-87.5) -- (349,-38);

\draw[dash pattern={on 0.80pt off 1pt},opacity=0.5]
  (392,-38) -- (370,-59.5);

\draw[
  decoration={markings, mark=at position 0.6 with {\arrow[scale=1.2]{stealth}}},
  postaction={decorate}
] (370,-59.5) -- (348,-81);

\filldraw[fill=black,draw=black] (370,-59.5) circle (1pt);
\draw (371,-40) node[anchor=north east,inner sep=0.75pt] {\small $t_{1}$};

\draw[dash pattern={on 0.30pt off 0.35pt}]
  (354,-87) -- (405.5,-35);

\draw[dash pattern={on 0.80pt off 1pt},opacity=0.5]
  (349,-38) -- (392,-38);

\draw (268.5,-38) -- (268.5,-59) -- (249,-59);

\draw (291.5,-105) node[anchor=north west,inner sep=0.75pt] {$0$};
\draw (347.5,-105) node[anchor=north west,inner sep=0.75pt] {$1$};
\draw (248,-38) node[anchor=north west,inner sep=0.75pt] {$t_{2}$};

\end{tikzpicture}
}}
\!\!\right)
+
\left(\!\!\!
\vcenter{\hbox{%
\tikzset{every picture/.style={line width=0.75pt}} 

\begin{tikzpicture}[x=0.75pt,y=0.75pt,yscale=0.7,xscale=0.7]
\path[use as bounding box] (245,-30) rectangle (420,-120);

\filldraw[fill=white,draw=black] (296.5,-90) circle (2pt);
\filldraw[fill=white,draw=black] (352,-90) circle (2pt);

\draw[dash pattern={on 0.80pt off 1pt},opacity=0.5]
  (352,-89) ++(-3.6:7pt)
  arc[start angle=360, end angle=115, radius=7pt];

\draw[
  decoration={markings, mark=at position 0.6 with {\arrow[scale=1.2]{stealth}}},
  postaction={decorate}
] (361,-90) -- (387.25,-64);

\draw[dash pattern={on 0.80pt off 1pt},opacity=0.5]
  (387.25,-64) -- (413.5,-38);

\draw[dash pattern={on 0.80pt off 1pt},opacity=0.5]
  (298,-87.5) -- (349,-38);

\draw[dash pattern={on 0.80pt off 1pt},opacity=0.5]
  (392,-38) -- (348,-81);

\filldraw[fill=black,draw=black] (387.25,-64) circle (1pt);
\draw (387.25,-64)
  node[anchor=north west,inner sep=0.75pt] {\small $t_{1}$};

\draw[dash pattern={on 0.30pt off 0.35pt}]
  (354,-87) -- (405.5,-35);

\draw[dash pattern={on 0.80pt off 1pt},opacity=0.5]
  (349,-38) -- (392,-38);

\draw (268.5,-38) -- (268.5,-59) -- (249,-59);

\draw (291.5,-105) node[anchor=north west,inner sep=0.75pt] {$0$};
\draw (347.5,-105) node[anchor=north west,inner sep=0.75pt] {$1$};
\draw (248,-38) node[anchor=north west,inner sep=0.75pt] {$t_{2}$};

\end{tikzpicture}
}}
\!\!\right)
+
\left(\!\!\!
\vcenter{\hbox{%
\tikzset{every picture/.style={line width=0.75pt}} 

\begin{tikzpicture}[x=0.75pt,y=0.75pt,yscale=0.7,xscale=0.7]
\path[use as bounding box] (245,-30) rectangle (420,-120);

\filldraw[fill=white,draw=black] (296.5,-90) circle (2pt);
\filldraw[fill=white,draw=black] (352,-90) circle (2pt);

\draw[dash pattern={on 0.80pt off 1pt},opacity=0.5]
  (352,-89) ++(-3.6:7pt)
  arc[start angle=360, end angle=115, radius=7pt];

\draw[dash pattern={on 0.80pt off 1pt},opacity=0.5]
  (361,-90) -- (387.25,-64);

\draw[
  decoration={markings, mark=at position 0.6 with {\arrow[scale=1.2]{stealth}}},
  postaction={decorate}
] (387.25,-64) -- (413.5,-38);

\draw[dash pattern={on 0.80pt off 1pt},opacity=0.5]
  (298,-87.5) -- (349,-38);

\draw[dash pattern={on 0.80pt off 1pt},opacity=0.5]
  (392,-38) -- (348,-81);

\filldraw[fill=black,draw=black] (387.25,-64) circle (1pt);
\draw (387.25,-64)
  node[anchor=north west,inner sep=0.75pt] {\small $t_{1}$};

\draw[dash pattern={on 0.30pt off 0.35pt}]
  (354,-87) -- (405.5,-35);

\draw[dash pattern={on 0.80pt off 1pt},opacity=0.5]
  (349,-38) -- (392,-38);

\draw (268.5,-38) -- (268.5,-59) -- (249,-59);

\draw (291.5,-105) node[anchor=north west,inner sep=0.75pt] {$0$};
\draw (347.5,-105) node[anchor=north west,inner sep=0.75pt] {$1$};
\draw (248,-38) node[anchor=north west,inner sep=0.75pt] {$t_{2}$};

\end{tikzpicture}
}}
\!\!\right)\right\} 
\\
&=:\Delta_{\mathrm{I}}+\Delta_{\mathrm{\two}}+\Delta_{\mathrm{\three}}+\Delta_{\mathrm{\four}}+\Delta_{\mathrm{V}}
\end{aligned}
\end{equation}
\caption{Partition of the deformed domain}
\label{fig:deform4}
\end{figure}

\noindent
Note that $\Delta_{\mathrm{I}}=\Delta_{10}^{\theta_{-1}}$ and $\Delta_{\mathrm{\three}}=\Delta_{\mathrm{\four}}=\Delta_{11}^{\theta_{-1}}$ as domains.
The branches of the integrand $\Phi(t_1,t_2;x)$ of $V_{\Delta_{\mathrm{I}}}(x),\ldots,V_{\Delta_{\mathrm{V}}}(x)$ on $\Delta_{\mathrm{I}},\ldots,\Delta_{\mathrm{V}}$ are given as follows: 
	\begin{align}
	\label{eq:branchD10}
	&\Delta_{\mathrm{I}}:
		\begin{cases}
		\arg t_1=0 \quad \text{at $t_1=1$}, \\
		\arg(1-t_1)=\theta_{-1}+\pi, \\
		\arg t_2=\theta_{-1}, \\
		\arg(1-t_2)=0 \quad \text{at $t_2=0$}, \\
		\arg(t_1-t_2)=0 \quad \text{at $(t_1,t_2)=(1,0)$},
		\end{cases} 
	\quad
	&\Delta_{\mathrm{\two}}:
		\begin{cases}
		\arg t_1=0 \quad \text{at $t_1=1$}, \\
		\arg(1-t_1)=\theta_{-1}+\pi, \\
		\arg t_2=0\quad \text{at $t_2=1$}, \\
		\arg(1-t_2)=\theta_{-1}-\pi, \\
		\arg(t_1-t_2)=\theta_{-1}-\pi,
		\end{cases} 
	\\
	&\Delta_{\mathrm{\three}}:
		\begin{cases}
		\arg t_1=0 \quad \text{at $t_1=1$}, \\
		\arg(1-t_1)=\theta_{-1}+\pi, \\
		\arg t_2=0 \quad \text{at $t_2=1$}, \\
		\arg(1-t_2)=\theta_{-1}-\pi , \\
		\arg(t_1-t_2)=\theta_{-1}\phantom{ \text{at $(t_1,t_2)=(1,0)$}}
		\end{cases} 
	\quad
	&\Delta_{\mathrm{\four}}:
		\begin{cases}
		\arg t_1=0\quad \text{at $t_1=1$}, \\
		\arg(1-t_1)=\theta_{-1}+\pi, \\
		\arg t_2=0 \quad \text{at $t_2=1$}, \\
		\arg(1-t_2)=\theta_{-1}+\pi, \\
		\arg(t_1-t_2)=\theta_{-1},
		\end{cases} 
	\\
	&\Delta_{\mathrm{V}}:
		\begin{cases}
		\arg t_1=0\quad \text{at $t_1=1$}, \\
		\arg(1-t_1)=\theta_{-1}+\pi, \\
		\arg t_2=0 \quad \text{at $t_2=1$}, \\
		\arg(1-t_2)=\theta_{-1}+\pi, \\
		\arg(t_1-t_2)=\theta_{-1}+\pi.
		\end{cases} 
	\end{align}
Comparing these branches with \eqref{eq:branch2} and \eqref{eq:branch3}, we have
	\begin{equation}\label{eq:V134}
	\begin{aligned}
	&V_{\Delta_{\mathrm{I}}}(x)=V_{10}^{(-1)}(x), 
	\quad 
	V_{\Delta_{\mathrm{\three}}}(x)=-e(-2b)V_{11}^{(-1)}(x), \\
	&V_{\Delta_{\mathrm{\four}}}(x)=e(-2b)e(2b)V_{11}^{(-1)}(x)=V_{11}^{(-1)}(x). 
	\end{aligned}
	\end{equation}
Let us examine $V_{\Delta_{\mathrm{\two}}}(x)$ and $V_{\Delta_{\mathrm{V}}}(x)$. 
Comparing the branches, we have
	\begin{equation}\label{eq:V5}
		V_{\Delta_{\mathrm{\two}}}=-e(-2b)e(-2g)V_{\Delta_{\mathrm{V}}}=-e(-2b-2g)V_{\Delta_{\mathrm{V}}}.
	\end{equation}
For $V_{\Delta_{\mathrm{V}}}$, we have the following. 
	\begin{lemma}\label{lem:DeltaV}
	$
	V_{\Delta_{\mathrm{V}}}(x)=e(g)V_{11}^{(-1)}(x).
	$
	\end{lemma}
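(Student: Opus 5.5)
The plan is to prove the lemma by exchanging the two integration variables, which turns $\Delta_{\mathrm{V}}$ into $\Delta_{11}^{\theta_{-1}}$. Everything else in the integrand is symmetric, so only the factor $(t_1-t_2)^g$ contributes a constant. Throughout, $\theta:=\theta_{-1}$.

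First, parametrize both domains along the common ray. Put $t_l=1+r_le^{\sqrt{-1}\theta}$ with $r_l>0$, so that $dt_1\wedge dt_2=e^{2\sqrt{-1}\theta}\,dr_1\wedge dr_2$. Then $\Delta_{\mathrm{V}}$ becomes the region $\{0<r_1<r_2\}$, since for fixed $t_1$ the variable $t_2$ runs from $t_1$ to $\infty e^{\sqrt{-1}\theta}$, so $r_2$ increases from $r_1$ to $\infty$. Likewise $\Delta_{11}^{\theta}$ becomes $\{0<r_2<r_1\}$, since $t_2$ runs from $1$ to $t_1$, so $r_2$ increases from $0$ to $r_1$. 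In both cases the orientation is $dr_1\wedge dr_2$ with $r_1$ increasing. Hence both integrals are ordinary Lebesgue integrals in $(r_1,r_2)$, carrying the same Jacobian factor $e^{2\sqrt{-1}\theta}$. These integrals converge absolutely under \eqref{eq:assumpconv}, with the integrand decaying as in the proof of Proposition~\ref{prop:firstterm}.

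Next, perform the change of variables $(r_1,r_2)\mapsto(r_2,r_1)$ in $V_{\Delta_{\mathrm{V}}}(x)$. This maps $\{0<r_1<r_2\}$ onto $\{0<r_2<r_1\}$ and preserves Lebesgue measure, so no sign arises. We then check the integrand factor by factor.
\begin{itemize}
\item The rational coefficients of $\varphi_1,\varphi_2,\varphi_3$ in \eqref{eq:phi} are symmetric under $t_1\leftrightarrow t_2$.
\item The factors $t_l^{-(a+b+g)}(1-t_l)^be^{xt_l}$ enter as a symmetric product. Their branches on $\Delta_{\mathrm{V}}$ are the same for $l=1,2$: $\arg t_l=0$ at $t_l=1$, and $\arg(1-t_l)=\theta+\pi$. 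These coincide with the branches on $\Delta_{11}^{\theta}$ in \eqref{eq:branch3}, so this product is unchanged by the swap.
\item On $\Delta_{\mathrm{V}}$ we have $t_1-t_2=(r_1-r_2)e^{\sqrt{-1}\theta}$ with $r_1<r_2$. Its branch is $\arg(t_1-t_2)=\theta+\pi$. After the swap it becomes $(t_2-t_1)^g$ evaluated at a point with $\arg(t_1-t_2)=\theta$. Therefore $(t_2-t_1)^g=e^{\pi\sqrt{-1}g}(t_1-t_2)^g$, where the right-hand side carries exactly the branch prescribed for $\Delta_{11}^{\theta}$.
\end{itemize}
Collecting these, $V_{\Delta_{\mathrm{V}}}(x)=e(g)V_{11}^{(-1)}(x)$, componentwise for all three entries of $\vec\varphi$.

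The only delicate point is the bookkeeping of orientation and branches. One must confirm that the product-orientation convention for $\Delta_{\mathrm{V}}$ and for $\Delta_{11}^{\theta}$ yields the same sign after reduction to $(r_1,r_2)$, since an orientation mismatch would introduce a spurious factor $-1$. One must also confirm that the branch of $(t_1-t_2)^g$ on $\Delta_{\mathrm{V}}$ really is $\theta+\pi$ and not $\theta-\pi$. For the latter I would trace continuously from the branch on $\Delta_{\mathrm{IV}}$ across the point $t_2=t_1$ along the deformed path, consistent with the argument jumps recorded for $\Delta_{\mathrm{II}},\ldots,\Delta_{\mathrm{V}}$. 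The parametrization above makes both checks transparent.
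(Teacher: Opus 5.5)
Your proposal is correct and follows essentially the paper's route. The paper also swaps $t_1\leftrightarrow t_2$, by interchanging the order of the iterated integral over $\Delta_{\mathrm{V}}$ and renaming the dummy variables; this is your change of variables $(r_1,r_2)\mapsto(r_2,r_1)$ in a different guise. It then notes that the only non-symmetric factor is $(t_1-t_2)^g$, with $\arg(t_2-t_1)=\theta_{-1}+\pi$ compared with $\arg(t_1-t_2)=\theta_{-1}$ on $\Delta_{11}^{\theta_{-1}}$, and it concludes componentwise from the symmetry of $\vec{\varphi}$.

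The branch check you flag at the end is not needed for the lemma itself, because $\arg(t_1-t_2)=\theta_{-1}+\pi$ on $\Delta_{\mathrm{V}}$ is part of the paper's specification. It is also consistent with the deformation: $L_3$ passes $t_1$ on the $\theta_{-1}-\pi/2$ side.
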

\begin{proof}
We write the integrand $\Phi$ of $V_{\Delta_{\mathrm{V}}}(x)$ as $\Phi_{\Delta_{\mathrm{V}}}$
and consider the scalar integral
	\begin{equation}
	\begin{aligned}
	\int_{\Delta_{\mathrm{V}}}\Phi_{\Delta_{\mathrm{V}}}(t_1,t_2;x)\, dt_1\wedge dt_2 
	&=\int_{1}^{\infty e^{\sqrt{-1}\theta_{-1}}}dt_1\int_{t_1}^{\infty e^{\sqrt{-1}\theta_{-1}}}dt_2 \,\Phi_{\Delta_{\mathrm{V}}}(t_1,t_2;x) \\
	&=\int_{1}^{\infty e^{\sqrt{-1}\theta_{-1}}}dt_2\int_{1}^{t_2}dt_1\,\Phi_{\Delta_{\mathrm{V}}}(t_1,t_2;x).
	\end{aligned}
	\end{equation}
To obtain the last expression, we interchanged the order of integration. 
Since the variables in the last iterated integral are dummy variables, we may interchange their names and obtain
	\begin{equation}
	\begin{aligned}
	\int_{1}^{\infty e^{\sqrt{-1}\theta_{-1}}}dt_2\int_{1}^{t_2}dt_1\,\Phi_{\Delta_{\mathrm{V}}}(t_1,t_2;x) &= \int_{1}^{\infty e^{\sqrt{-1}\theta_{-1}}}dt_1\int_{1}^{t_1}dt_2\,\Phi_{\Delta_{\mathrm{V}}}(t_2,t_1;x) 
	\\
	&=\int_{\Delta_{11}^{\theta_{-1}}}\Phi_{\Delta_{\mathrm{V}}}(t_2,t_1;x)\,dt_1\wedge dt_2.
	\end{aligned}
	\end{equation}
The branch of the integrand $\Phi_{\Delta_{\mathrm{V}}}(t_2,t_1;x)$ of the last integral on $\Delta_{11}^{\theta_{-1}}$ is inherited from that of $\Phi_{\Delta_{\mathrm{V}}}(t_1,t_2;x)$ of the first integral $V_{\Delta_{\mathrm{V}}}$ on $\Delta_{\mathrm{V}}$:
\begin{equation}
		\begin{cases}
		\arg t_2=0 \quad \text{at $t_2=1$}, \\
		\arg(1-t_2)=\theta_{-1}+\pi, \\
		\arg t_1=0\quad \text{at $t_1=1$}, \\
		\arg(1-t_1)=\theta_{-1}+\pi, \\
		\arg(t_2-t_1)=\theta_{-1}+\pi.
		\end{cases} 
\end{equation} 
Comparing these branches with \eqref{eq:branch3}, we have
	\begin{equation}
	\begin{aligned}
	\Phi_{\Delta_{\mathrm{V}}}(t_2,t_1;x)&=e(g)\Phi_{\Delta_{11}^{\theta_{-1}}}(t_1,t_2;x),
	\end{aligned}
	\end{equation}
where $\Phi_{\Delta_{11}^{\theta_{-1}}}(t_1,t_2;x)$ denotes the function $\Phi(t_1,t_2;x)$ whose branch on $\Delta_{11}^{\theta_{-1}}$ is given by \eqref{eq:branch3}. 
Thus, we have
	\begin{equation}\label{eq:symscalar}
		\int_{\Delta_{\mathrm{V}}}\Phi_{\Delta_{\mathrm{V}}}(t_1,t_2;x)\, dt_1\wedge dt_2
		=e(g)\int_{\Delta_{11}^{\theta_{-1}}}\Phi_{\Delta_{11}^{\theta_{-1}}}(t_1,t_2;x)\,dt_1\wedge dt_2.
	\end{equation}
Since the vector $\vec{\varphi}$ consists of rational $2$-forms whose coefficients are symmetric with respect to $t_1$ and $t_2$, the same argument applies componentwise to the integral with $\vec{\varphi}$.
This completes the proof. 
\end{proof}

Summarizing the above arguments, we obtain
	\begin{equation}\label{eq:StokesV10}
	\begin{aligned}
	V_{10}^{(0)}(x)
	=V_{10}^{(-1)}(x)+\left(1-e(-2b-g)\right)\left(1+e(g)\right)V_{11}^{(-1)}(x).
	\end{aligned}
	\end{equation}
\FloatBarrier
Finally, we consider the deformation of $\Delta_{00}^{\theta_{0}}$.
In this case, the points $1$ in the $t_1$- and $t_2$-planes obstruct the rotation (Figure \ref{fig:deform00-1}). 

\begin{figure}[!htbp]
\[
\begin{aligned}
&
\left(\!\!\!
\vcenter{\hbox{%
\tikzset{every picture/.style={line width=0.75pt}} 

\begin{tikzpicture}[x=0.75pt,y=0.75pt,yscale=0.8,xscale=0.8]
\path[use as bounding box] (245,-30) rectangle (420,-120);

\filldraw[fill=white,draw=black] (296.5,-60) circle (2pt);
\filldraw[fill=white,draw=black] (352,-60) circle (2pt);

\draw[
  decoration={markings, mark=at position 0.5 with {\arrow[scale=1.2]{stealth}}},
  postaction={decorate}
] (297,-62.5) -- (350,-115);

\draw[dash pattern={on 0.84pt off 2.51pt}]
  (325,-60.5) -- (298,-60.5);

\draw (268.5,-38) -- (268.5,-59) -- (249,-59);

\draw[line width=0.5pt] (340-352+296.5,-59) ++(-3.6:25pt)
  arc[start angle=0, end angle=-36, radius=22pt];

\draw (290.5,-38) node[anchor=north west,inner sep=0.75pt] {$0$};
\draw (345.5,-38) node[anchor=north west,inner sep=0.75pt] {$1$};
\draw (250,-38) node[anchor=north west,inner sep=0.75pt] {$t_{1}$};
\draw (321.5,-65) node[anchor=north west,inner sep=0.75pt] {$\theta_{0}$};

\end{tikzpicture}
}}
\right)
\times
\left(\!\!\!
\vcenter{\hbox{%
\tikzset{every picture/.style={line width=0.75pt}} 

\begin{tikzpicture}[x=0.75pt,y=0.75pt,yscale=0.8,xscale=0.8]
\path[use as bounding box] (245,-30) rectangle (400,-120);

\filldraw[fill=white,draw=black] (296.5,-60) circle (2pt);
\filldraw[fill=white,draw=black] (352,-60) circle (2pt);

\draw[
  decoration={markings, mark=at position 0.5 with {\arrow[scale=1.2]{stealth}}},
  postaction={decorate}
] (297,-62.5) -- (328.8,-94);

\draw[dash pattern={on 0.84pt off 2.51pt}]
  (325,-60.5) -- (298,-60.5);

\draw (268.5,-38) -- (268.5,-59) -- (249,-59);

\draw[line width=0.5pt] (284.5,-59) ++(-3.6:25pt)
  arc[start angle=0, end angle=-36, radius=22pt];

\draw (290.5,-38) node[anchor=north west,inner sep=0.75pt] {$0$};
\draw (345.5,-38) node[anchor=north west,inner sep=0.75pt] {$1$};
\draw (250,-38) node[anchor=north west,inner sep=0.75pt] {$t_{2}$};
\draw (377-352+296.5,-65)
  node[anchor=north west,inner sep=0.75pt] {$\theta_{0}$};

\draw (336,-95) node[below,inner sep=0.75pt] {$t_1$};
\filldraw[fill=black,draw=black] (328.8,-94) circle (1pt);

\end{tikzpicture}
\quad }}
\right)
\\
&\longrightarrow\quad 
\left(\!\!\!
\vcenter{\hbox{%
\tikzset{every picture/.style={line width=0.75pt}} 

\begin{tikzpicture}[x=0.75pt,y=0.75pt,yscale=0.8,xscale=0.8]
\path[use as bounding box] (245,-30) rectangle (420,-120);

\filldraw[fill=white,draw=black] (296.5,-90) circle (2pt);
\filldraw[fill=white,draw=black] (352,-90) circle (2pt);

\draw (298.5,-90) -- (343,-90);

\draw (352,-89) ++(-3.6:7pt)
  arc[start angle=360, end angle=180, radius=7pt];

\draw[
  decoration={markings, mark=at position 0.6 with {\arrow[scale=1.2]{stealth}}},
  postaction={decorate}
] (361,-90) -- (413.5,-38);

\draw[dash pattern={on 0.84pt off 2.51pt}]
  (410,-90) -- (360,-90);

\draw[line width=0.5pt] (355,-89) ++(-3.6:20pt)
  arc[start angle=0, end angle=33, radius=22pt];

\draw (268.5,-38) -- (268.5,-59) -- (249,-59);

\draw (290.5,-70) node[anchor=north west,inner sep=0.75pt] {$0$};
\draw (346,-70) node[anchor=north west,inner sep=0.75pt] {$1$};
\draw (250,-38) node[anchor=north west,inner sep=0.75pt] {$t_{1}$};
\draw (386,-69) node[anchor=north west,inner sep=0.75pt] {$\theta_{-1}$};

\end{tikzpicture}
}}
\right)
\times
\left(\!\!\!
\vcenter{\hbox{%
\tikzset{every picture/.style={line width=0.75pt}} 

\begin{tikzpicture}[x=0.75pt,y=0.75pt,yscale=0.8,xscale=0.8]
\path[use as bounding box] (245,-30) rectangle (400,-120);

\filldraw[fill=white,draw=black] (296.5,-90) circle (2pt);
\filldraw[fill=white,draw=black] (352,-90) circle (2pt);

\draw (298.5,-90) -- (343,-90);

\draw (352,-89) ++(-3.6:7pt)
  arc[start angle=360, end angle=180, radius=7pt];

\draw[
  decoration={markings, mark=at position 0.6 with {\arrow[scale=1.2]{stealth}}},
  postaction={decorate}
] (361,-90) -- (391,-60.2857);

\draw[dash pattern={on 0.84pt off 2.51pt}]
  (410,-90) -- (360,-90);

\draw[line width=0.5pt] (355,-89) ++(-3.6:20pt)
  arc[start angle=0, end angle=33, radius=22pt];

\draw (268.5,-38) -- (268.5,-59) -- (249,-59);

\draw (290.5,-70) node[anchor=north west,inner sep=0.75pt] {$0$};
\draw (346,-70) node[anchor=north west,inner sep=0.75pt] {$1$};
\draw (250,-38) node[anchor=north west,inner sep=0.75pt] {$t_{2}$};
\draw (386,-69) node[anchor=north west,inner sep=0.75pt] {$\theta_{-1}$};

\draw (383,-57) node[above,inner sep=0.75pt] {$t_1$};
\filldraw[fill=black,draw=black] (391,-60.2857) circle (1pt);

\end{tikzpicture}
\quad }}
\right)
\end{aligned}
\]
\caption{Deformation of $\Delta_{00}^{\theta_{0}}$ associated with the rotation
	}
\label{fig:deform00-1}
\end{figure}
\noindent
As in the case of $\Delta_{10}^{\theta_{0}}$, we further deform the domain and decompose it into seven domains as in Figure \ref{fig:deform00-2}. 
\begin{figure}[!htbp]
\begin{equation}
\begin{aligned}
&\left(\!\!\!
\vcenter{\hbox{%
\tikzset{every picture/.style={line width=0.75pt}} 

\begin{tikzpicture}[x=0.75pt,y=0.75pt,yscale=0.7,xscale=0.7]
\path[use as bounding box] (245,-30) rectangle (420,-120);

\filldraw[fill=white,draw=black] (296.5,-90) circle (2pt);
\filldraw[fill=white,draw=black] (352,-90) circle (2pt);

\draw (352,-89) ++(-3.6:7pt)
  arc[start angle=360, end angle=115, radius=7pt];

\draw[
  decoration={markings, mark=at position 0.6 with {\arrow[scale=1.2]{stealth}}},
  postaction={decorate}
] (361,-90) -- (413.5,-38);

\draw[
  decoration={markings, mark=at position 0.6 with {\arrow[scale=1.2]{stealth}}},
  postaction={decorate}
] (298,-87.5) -- (349,-38);

\draw[
  decoration={markings, mark=at position 0.6 with {\arrow[scale=1.2]{stealth}}},
  postaction={decorate}
] (392,-38) -- (348,-81);

\draw[dash pattern={on 0.30pt off 0.35pt}]
  (354,-87) -- (405.5,-35);

\draw[dash pattern={on 0.80pt off 2pt}]
  (349,-38) -- (392,-38);

\draw (268.5,-38) -- (268.5,-59) -- (249,-59);

\draw (291.5,-105) node[anchor=north west,inner sep=0.75pt] {$0$};
\draw (347.5,-105) node[anchor=north west,inner sep=0.75pt] {$1$};
\draw (248,-38) node[anchor=north west,inner sep=0.75pt] {$t_{1}$};

\end{tikzpicture}
}}
\!\!\right)
\times
\left(\!\!\!
\vcenter{\hbox{%
\tikzset{every picture/.style={line width=0.75pt}} 

\begin{tikzpicture}[x=0.75pt,y=0.75pt,yscale=0.7,xscale=0.7]
\path[use as bounding box] (245,-30) rectangle (420,-120);

\filldraw[fill=white,draw=black] (296.5,-90) circle (2pt);
\filldraw[fill=white,draw=black] (352,-90) circle (2pt);

\draw (352,-89) ++(-3.6:7pt)
  arc[start angle=360, end angle=115, radius=7pt];

\draw[
  decoration={markings, mark=at position 0.6 with {\arrow[scale=1.2]{stealth}}},
  postaction={decorate}
] (361,-90) -- (391,-60.2857);

\draw[
  decoration={markings, mark=at position 0.6 with {\arrow[scale=1.2]{stealth}}},
  postaction={decorate}
] (298,-87.5) -- (349,-38);

\draw[
  decoration={markings, mark=at position 0.6 with {\arrow[scale=1.2]{stealth}}},
  postaction={decorate}
] (392,-38) -- (348,-81);

\draw[dash pattern={on 0.30pt off 0.35pt}]
  (354,-87) -- (405.5,-35);

\draw[dash pattern={on 0.80pt off 2pt}]
  (349,-38) -- (392,-38);

\draw (268.5,-38) -- (268.5,-59) -- (249,-59);

\draw (291.5,-105) node[anchor=north west,inner sep=0.75pt] {$0$};
\draw (347.5,-105) node[anchor=north west,inner sep=0.75pt] {$1$};
\draw (248,-38) node[anchor=north west,inner sep=0.75pt] {$t_{2}$};

\filldraw[fill=black,draw=black] (391,-60.2857) circle (1pt);
\draw (397,-60.2857) node[right,inner sep=0.75pt] {$t_{1}$};

\end{tikzpicture}
}}
\!\!\right)
 \\
&=
\left(\!\!\!
\vcenter{\hbox{%
\tikzset{every picture/.style={line width=0.75pt}} 

\begin{tikzpicture}[x=0.75pt,y=0.75pt,yscale=0.7,xscale=0.7]
\path[use as bounding box] (245,-30) rectangle (420,-120);

\filldraw[fill=white,draw=black] (296.5,-90) circle (2pt);
\filldraw[fill=white,draw=black] (352,-90) circle (2pt);

\draw[dash pattern={on 0.80pt off 1pt},opacity=0.5]
  (352,-89) ++(-3.6:7pt)
  arc[start angle=360, end angle=115, radius=7pt];

\draw[dash pattern={on 0.80pt off 1pt},opacity=0.5]
  (361,-90) -- (413.5,-38);

\draw[
  decoration={markings, mark=at position 0.6 with {\arrow[scale=1.2]{stealth}}},
  postaction={decorate}
] (298,-87.5) -- (349,-38);

\draw[dash pattern={on 0.80pt off 1pt},opacity=0.5]
  (392,-38) -- (348,-81);

\draw[dash pattern={on 0.30pt off 0.35pt}]
  (354,-87) -- (405.5,-35);

\draw[dash pattern={on 0.80pt off 1pt},opacity=0.5]
  (349,-38) -- (392,-38);

\draw (268.5,-38) -- (268.5,-59) -- (249,-59);

\draw (291.5,-105) node[anchor=north west,inner sep=0.75pt] {$0$};
\draw (347.5,-105) node[anchor=north west,inner sep=0.75pt] {$1$};
\draw (248,-38) node[anchor=north west,inner sep=0.75pt] {$t_{1}$};

\end{tikzpicture}
}}
\!\!\right)
\times
\left(\!\!\!
\vcenter{\hbox{%
\tikzset{every picture/.style={line width=0.75pt}} 

\begin{tikzpicture}[x=0.75pt,y=0.75pt,yscale=0.7,xscale=0.7]
\path[use as bounding box] (245,-30) rectangle (420,-120);

\filldraw[fill=white,draw=black] (296.5,-90) circle (2pt);
\filldraw[fill=white,draw=black] (352,-90) circle (2pt);

\draw[dash pattern={on 0.80pt off 1pt},opacity=0.5]
  (352,-89) ++(-3.6:7pt)
  arc[start angle=360, end angle=115, radius=7pt];

\draw[dash pattern={on 0.80pt off 1pt},opacity=0.5]
  (361,-90) -- (413.5,-38);

\draw[
  decoration={markings, mark=at position 0.6 with {\arrow[scale=1.2]{stealth}}},
  postaction={decorate}
] (298,-87.5) -- (323.5,-62.75);

\draw[dash pattern={on 0.80pt off 1pt},opacity=0.5]
  (323.5,-62.75) -- (349,-38);

\draw[dash pattern={on 0.80pt off 1pt},opacity=0.5]
  (392,-38) -- (348,-81);

\draw[dash pattern={on 0.30pt off 0.35pt}]
  (354,-87) -- (405.5,-35);

\draw[dash pattern={on 0.80pt off 1pt},opacity=0.5]
  (349,-38) -- (392,-38);

\draw (268.5,-38) -- (268.5,-59) -- (249,-59);

\draw (291.5,-105) node[anchor=north west,inner sep=0.75pt] {$0$};
\draw (347.5,-105) node[anchor=north west,inner sep=0.75pt] {$1$};
\draw (248,-38) node[anchor=north west,inner sep=0.75pt] {$t_{2}$};

\filldraw[fill=black,draw=black] (323.5,-62.75) circle (1pt);
\draw (323.5,-53) node[left,inner sep=0.75pt] {\small $t_{1}$};

\end{tikzpicture}
}}
\!\!\right) 
 \\
&+\left(\!\!\!
\vcenter{\hbox{%
\tikzset{every picture/.style={line width=0.75pt}} 

\begin{tikzpicture}[x=0.75pt,y=0.75pt,yscale=0.7,xscale=0.7]
\path[use as bounding box] (245,-30) rectangle (420,-120);

\filldraw[fill=white,draw=black] (296.5,-90) circle (2pt);
\filldraw[fill=white,draw=black] (352,-90) circle (2pt);

\draw[dash pattern={on 0.80pt off 1pt},opacity=0.5]
  (352,-89) ++(-3.6:7pt)
  arc[start angle=360, end angle=115, radius=7pt];

\draw[dash pattern={on 0.80pt off 1pt},opacity=0.5]
  (361,-90) -- (413.5,-38);

\draw[dash pattern={on 0.80pt off 1pt},opacity=0.5]
  (298,-87.5) -- (349,-38);

\draw[
  decoration={markings, mark=at position 0.6 with {\arrow[scale=1.2]{stealth}}},
  postaction={decorate}
] (392,-38) -- (348,-81);

\draw[dash pattern={on 0.30pt off 0.35pt}]
  (354,-87) -- (405.5,-35);

\draw[dash pattern={on 0.80pt off 1pt},opacity=0.5]
  (349,-38) -- (392,-38);

\draw (268.5,-38) -- (268.5,-59) -- (249,-59);

\draw (291.5,-105) node[anchor=north west,inner sep=0.75pt] {$0$};
\draw (347.5,-105) node[anchor=north west,inner sep=0.75pt] {$1$};
\draw (248,-38) node[anchor=north west,inner sep=0.75pt] {$t_{1}$};

\end{tikzpicture}
}}
\!\!\right)
\times \left\{ 
\left(\!\!\!
\vcenter{\hbox{%
\tikzset{every picture/.style={line width=0.75pt}} 

\begin{tikzpicture}[x=0.75pt,y=0.75pt,yscale=0.7,xscale=0.7]
\path[use as bounding box] (245,-30) rectangle (420,-120);

\filldraw[fill=white,draw=black] (296.5,-90) circle (2pt);
\filldraw[fill=white,draw=black] (352,-90) circle (2pt);

\draw[dash pattern={on 0.80pt off 1pt},opacity=0.5]
  (352,-89) ++(-3.6:7pt)
  arc[start angle=360, end angle=115, radius=7pt];

\draw[dash pattern={on 0.80pt off 1pt},opacity=0.5]
  (361,-90) -- (413.5,-38);

\draw[
  decoration={markings, mark=at position 0.6 with {\arrow[scale=1.2]{stealth}}},
  postaction={decorate}
] (298,-87.5) -- (349,-38);

\draw[dash pattern={on 0.80pt off 1pt},opacity=0.5]
  (392,-38) -- (348,-81);

\draw[dash pattern={on 0.30pt off 0.35pt}]
  (354,-87) -- (405.5,-35);

\draw[dash pattern={on 0.80pt off 1pt},opacity=0.5]
  (349,-38) -- (392,-38);

\draw (268.5,-38) -- (268.5,-59) -- (249,-59);

\draw (291.5,-105) node[anchor=north west,inner sep=0.75pt] {$0$};
\draw (347.5,-105) node[anchor=north west,inner sep=0.75pt] {$1$};
\draw (248,-38) node[anchor=north west,inner sep=0.75pt] {$t_{2}$};

\end{tikzpicture}
}}
\!\!\right)
+
\left(\!\!\!
\vcenter{\hbox{%
\tikzset{every picture/.style={line width=0.75pt}} 

\begin{tikzpicture}[x=0.75pt,y=0.75pt,yscale=0.7,xscale=0.7]
\path[use as bounding box] (245,-30) rectangle (420,-120);

\filldraw[fill=white,draw=black] (296.5,-90) circle (2pt);
\filldraw[fill=white,draw=black] (352,-90) circle (2pt);

\draw[dash pattern={on 0.80pt off 1pt},opacity=0.5]
  (352,-89) ++(-3.6:7pt)
  arc[start angle=360, end angle=115, radius=7pt];

\draw[dash pattern={on 0.80pt off 1pt},opacity=0.5]
  (361,-90) -- (413.5,-38);

\draw[dash pattern={on 0.80pt off 1pt},opacity=0.5]
  (298,-87.5) -- (349,-38);

\draw[
  decoration={markings, mark=at position 0.6 with {\arrow[scale=1.2]{stealth}}},
  postaction={decorate}
] (392,-38) -- (370,-59.5);

\draw[dash pattern={on 0.80pt off 1pt},opacity=0.5]
  (370,-59.5) -- (348,-81);

\filldraw[fill=black,draw=black] (370,-59.5) circle (1pt);
\draw (367,-48) node[anchor=north east,inner sep=0.75pt] {\small $t_{1}$};

\draw[dash pattern={on 0.30pt off 0.35pt}]
  (354,-87) -- (405.5,-35);

\draw[dash pattern={on 0.80pt off 1pt},opacity=0.5]
  (349,-38) -- (392,-38);

\draw (268.5,-38) -- (268.5,-59) -- (249,-59);

\draw (291.5,-105) node[anchor=north west,inner sep=0.75pt] {$0$};
\draw (347.5,-105) node[anchor=north west,inner sep=0.75pt] {$1$};
\draw (248,-38) node[anchor=north west,inner sep=0.75pt] {$t_{2}$};

\end{tikzpicture}
}}
\!\!\right)\right\} 
\\
&+\left(\!\!\!
\vcenter{\hbox{%
\tikzset{every picture/.style={line width=0.75pt}} 

\begin{tikzpicture}[x=0.75pt,y=0.75pt,yscale=0.7,xscale=0.7]
\path[use as bounding box] (245,-30) rectangle (420,-120);

\filldraw[fill=white,draw=black] (296.5,-90) circle (2pt);
\filldraw[fill=white,draw=black] (352,-90) circle (2pt);

\draw[dash pattern={on 0.80pt off 1pt},opacity=0.5]
  (352,-89) ++(-3.6:7pt)
  arc[start angle=360, end angle=115, radius=7pt];

\draw[
  decoration={markings, mark=at position 0.6 with {\arrow[scale=1.2]{stealth}}},
  postaction={decorate}
] (361,-90) -- (413.5,-38);

\draw[dash pattern={on 0.80pt off 1pt},opacity=0.5]
  (298,-87.5) -- (349,-38);

\draw[dash pattern={on 0.80pt off 1pt},opacity=0.5]
  (392,-38) -- (348,-81);

\draw[dash pattern={on 0.30pt off 0.35pt}]
  (354,-87) -- (405.5,-35);

\draw[dash pattern={on 0.80pt off 1pt},opacity=0.5]
  (349,-38) -- (392,-38);

\draw (268.5,-38) -- (268.5,-59) -- (249,-59);

\draw (291.5,-105) node[anchor=north west,inner sep=0.75pt] {$0$};
\draw (347.5,-105) node[anchor=north west,inner sep=0.75pt] {$1$};
\draw (248,-38) node[anchor=north west,inner sep=0.75pt] {$t_{1}$};

\end{tikzpicture}
}}
\!\!\right)
\times \left\{ 
\left(\!\!\!
\vcenter{\hbox{%
\tikzset{every picture/.style={line width=0.75pt}} 

\begin{tikzpicture}[x=0.75pt,y=0.75pt,yscale=0.7,xscale=0.7]
\path[use as bounding box] (245,-30) rectangle (420,-120);

\filldraw[fill=white,draw=black] (296.5,-90) circle (2pt);
\filldraw[fill=white,draw=black] (352,-90) circle (2pt);

\draw[dash pattern={on 0.80pt off 1pt},opacity=0.5]
  (352,-89) ++(-3.6:7pt)
  arc[start angle=360, end angle=115, radius=7pt];

\draw[dash pattern={on 0.80pt off 1pt},opacity=0.5]
  (361,-90) -- (413.5,-38);

\draw[
  decoration={markings, mark=at position 0.6 with {\arrow[scale=1.2]{stealth}}},
  postaction={decorate}
] (298,-87.5) -- (349,-38);

\draw[dash pattern={on 0.80pt off 1pt},opacity=0.5]
  (392,-38) -- (348,-81);

\draw[dash pattern={on 0.30pt off 0.35pt}]
  (354,-87) -- (405.5,-35);

\draw[dash pattern={on 0.80pt off 1pt},opacity=0.5]
  (349,-38) -- (392,-38);

\draw (268.5,-38) -- (268.5,-59) -- (249,-59);

\draw (291.5,-105) node[anchor=north west,inner sep=0.75pt] {$0$};
\draw (347.5,-105) node[anchor=north west,inner sep=0.75pt] {$1$};
\draw (248,-38) node[anchor=north west,inner sep=0.75pt] {$t_{2}$};

\end{tikzpicture}
}}
\!\!\right)
+
\left(\!\!\!
\vcenter{\hbox{%
\tikzset{every picture/.style={line width=0.75pt}} 

\begin{tikzpicture}[x=0.75pt,y=0.75pt,yscale=0.7,xscale=0.7]
\path[use as bounding box] (245,-30) rectangle (420,-120);

\filldraw[fill=white,draw=black] (296.5,-90) circle (2pt);
\filldraw[fill=white,draw=black] (352,-90) circle (2pt);

\draw[dash pattern={on 0.80pt off 1pt},opacity=0.5]
  (352,-89) ++(-3.6:7pt)
  arc[start angle=360, end angle=115, radius=7pt];

\draw[dash pattern={on 0.80pt off 1pt},opacity=0.5]
  (361,-90) -- (413.5,-38);

\draw[dash pattern={on 0.80pt off 1pt},opacity=0.5]
  (298,-87.5) -- (349,-38);

\draw[
  decoration={markings, mark=at position 0.6 with {\arrow[scale=1.2]{stealth}}},
  postaction={decorate}
] (392,-38) -- (370,-59.5);

\draw[dash pattern={on 0.80pt off 1pt},opacity=0.5]
  (370,-59.5) -- (348,-81);

\filldraw[fill=black,draw=black] (370,-59.5) circle (1pt);
\draw (367,-48) node[anchor=north east,inner sep=0.75pt] {\small $t_{1}$};

\draw[dash pattern={on 0.30pt off 0.35pt}]
  (354,-87) -- (405.5,-35);

\draw[dash pattern={on 0.80pt off 1pt},opacity=0.5]
  (349,-38) -- (392,-38);

\draw (268.5,-38) -- (268.5,-59) -- (249,-59);

\draw (291.5,-105) node[anchor=north west,inner sep=0.75pt] {$0$};
\draw (347.5,-105) node[anchor=north west,inner sep=0.75pt] {$1$};
\draw (248,-38) node[anchor=north west,inner sep=0.75pt] {$t_{2}$};

\end{tikzpicture}
}}
\!\!\right)\right.
\\
&\hspace{9\baselineskip} +\left. 
\left(\!\!\!
\vcenter{\hbox{%
\tikzset{every picture/.style={line width=0.75pt}} 

\begin{tikzpicture}[x=0.75pt,y=0.75pt,yscale=0.7,xscale=0.7]
\path[use as bounding box] (245,-30) rectangle (420,-120);

\filldraw[fill=white,draw=black] (296.5,-90) circle (2pt);
\filldraw[fill=white,draw=black] (352,-90) circle (2pt);

\draw[dash pattern={on 0.80pt off 1pt},opacity=0.5]
  (352,-89) ++(-3.6:7pt)
  arc[start angle=360, end angle=115, radius=7pt];

\draw[dash pattern={on 0.80pt off 1pt},opacity=0.5]
  (361,-90) -- (413.5,-38);

\draw[dash pattern={on 0.80pt off 1pt},opacity=0.5]
  (298,-87.5) -- (349,-38);

\draw[dash pattern={on 0.80pt off 1pt},opacity=0.5]
  (392,-38) -- (370,-59.5);

\draw[
  decoration={markings, mark=at position 0.6 with {\arrow[scale=1.2]{stealth}}},
  postaction={decorate}
] (370,-59.5) -- (348,-81);

\filldraw[fill=black,draw=black] (370,-59.5) circle (1pt);
\draw (367,-44) node[anchor=north east,inner sep=0.75pt] {\small $t_{1}$};

\draw[dash pattern={on 0.30pt off 0.35pt}]
  (354,-87) -- (405.5,-35);

\draw[dash pattern={on 0.80pt off 1pt},opacity=0.5]
  (349,-38) -- (392,-38);

\draw (268.5,-38) -- (268.5,-59) -- (249,-59);

\draw (291.5,-105) node[anchor=north west,inner sep=0.75pt] {$0$};
\draw (347.5,-105) node[anchor=north west,inner sep=0.75pt] {$1$};
\draw (248,-38) node[anchor=north west,inner sep=0.75pt] {$t_{2}$};

\end{tikzpicture}
}}
\!\!\right)
+
\left(\!\!\!
\vcenter{\hbox{%
\tikzset{every picture/.style={line width=0.75pt}} 

\begin{tikzpicture}[x=0.75pt,y=0.75pt,yscale=0.7,xscale=0.7]
\path[use as bounding box] (245,-30) rectangle (420,-120);

\filldraw[fill=white,draw=black] (296.5,-90) circle (2pt);
\filldraw[fill=white,draw=black] (352,-90) circle (2pt);

\draw[dash pattern={on 0.80pt off 1pt},opacity=0.5]
  (352,-89) ++(-3.6:7pt)
  arc[start angle=360, end angle=115, radius=7pt];

\draw[
  decoration={markings, mark=at position 0.6 with {\arrow[scale=1.2]{stealth}}},
  postaction={decorate}
] (361,-90) -- (387.25,-64);

\draw[dash pattern={on 0.80pt off 1pt},opacity=0.5]
  (387.25,-64) -- (413.5,-38);

\draw[dash pattern={on 0.80pt off 1pt},opacity=0.5]
  (298,-87.5) -- (349,-38);

\draw[dash pattern={on 0.80pt off 1pt},opacity=0.5]
  (392,-38) -- (348,-81);

\filldraw[fill=black,draw=black] (387.25,-64) circle (1pt);
\draw (387.25,-64)
  node[anchor=north west,inner sep=0.75pt] {\small $t_{1}$};

\draw[dash pattern={on 0.30pt off 0.35pt}]
  (354,-87) -- (405.5,-35);

\draw[dash pattern={on 0.80pt off 1pt},opacity=0.5]
  (349,-38) -- (392,-38);

\draw (268.5,-38) -- (268.5,-59) -- (249,-59);

\draw (291.5,-105) node[anchor=north west,inner sep=0.75pt] {$0$};
\draw (347.5,-105) node[anchor=north west,inner sep=0.75pt] {$1$};
\draw (248,-38) node[anchor=north west,inner sep=0.75pt] {$t_{2}$};

\end{tikzpicture}
}}
\!\!\right)
\right\} \\
&=:\Delta_{\mathrm{\six}}+\Delta_{\mathrm{\seven}}+\Delta_{\mathrm{\eight}}+\Delta_{\mathrm{\nine}}+\Delta_{\mathrm{X}}+\Delta_{\mathrm{\eleven}}+\Delta_{\mathrm{\twelve}}
\end{aligned}
\end{equation}
\caption{Deformation and partition of the deformed domain}
\label{fig:deform00-2}
\end{figure}

\noindent
Consequently, we have 
	\begin{equation}\label{eq:V678910}
	V_{00}^{(0)}(x)=V_{\Delta_{\mathrm{\six}}}(x)+V_{\Delta_{\mathrm{\seven}}}(x)+V_{\Delta_{\mathrm{\eight}}}(x)+V_{\Delta_{\mathrm{\nine}}}(x)+V_{\Delta_{\mathrm{X}}}(x)+V_{\Delta_{\mathrm{\eleven}}}(x)+V_{\Delta_{\mathrm{\twelve}}}(x),
	\end{equation}
where
	\begin{equation}
	\begin{aligned}
	\Delta_{\mathrm{\six}}&=(t_1:0\to\infty e^{\sqrt{-1}\theta_{-1}})\times (t_2:0\to t_1), \\
	\Delta_{\mathrm{\seven}}&=(t_1:\infty e^{\sqrt{-1}\theta_{-1}}\to 1)\times (t_2:0\to \infty e^{\sqrt{-1}\theta_{-1}}), \\
	\Delta_{\mathrm{\eight}}&=(t_1:\infty e^{\sqrt{-1}\theta_{-1}}\to 1)\times (t_2:\infty e^{\sqrt{-1}\theta_{-1}}\to t_1), \\
	\Delta_{\mathrm{\nine}}&=(t_1:1\to\infty e^{\sqrt{-1}\theta_{-1}})\times (t_2:0\to \infty e^{\sqrt{-1}\theta_{-1}}), \\
	\Delta_{\mathrm{X}}&=(t_1:1\to\infty e^{\sqrt{-1}\theta_{-1}})\times (t_2:\infty e^{\sqrt{-1}\theta_{-1}}\to t_1), \\
	\Delta_{\mathrm{\eleven}}&=(t_1:1\to\infty e^{\sqrt{-1}\theta_{-1}})\times (t_2:t_1\to 1), \\
	\Delta_{\mathrm{\twelve}}&=(t_1:1\to\infty e^{\sqrt{-1}\theta_{-1}})\times (t_2:1\to t_1). 
	\end{aligned}
	\end{equation}
Note that $\Delta_{\mathrm{\six}}=\Delta_{00}^{\theta_{-1}}$, $\Delta_{\mathrm{\seven}}=\Delta_{\mathrm{\nine}}=\Delta_{\mathrm{10}}^{\theta_{-1}}$, $\Delta_{\mathrm{\eleven}}=\Delta_{\mathrm{\twelve}}=\Delta_{\mathrm{11}}^{\theta_{-1}}$, and $\Delta_{\mathrm{\eight}}=\Delta_{\mathrm{X}}=\Delta_{\mathrm{V}}$ as domains.
The branches of the integrand $\Phi(t_1,t_2;x)$ of $V_{\Delta_{\mathrm{\six}}}(x),\ldots,V_{\Delta_{\mathrm{\twelve}}}(x)$ on $\Delta_{\mathrm{\six}},\ldots,\Delta_{\mathrm{\twelve}}$ are given as follows: 
	\begin{align}
	&\Delta_{\mathrm{\six}}:
		\begin{cases}
\arg t_1=\theta_{-1}, \\
	\arg(1-t_1)=0 \quad \text{at $t_1=0$}, \\
	\arg t_2=\theta_{-1}, \\
	\arg(1-t_2)=0 \quad \text{at $t_2=0$}, \\
	\arg(t_1-t_2)=\theta_{-1},
		\end{cases} 
	\quad
	&\Delta_{\mathrm{\seven}}:
		\begin{cases}
		\arg t_1=0 \quad \text{at $t_1=1$}, \\
		\arg(1-t_1)=\theta_{-1}-\pi, \\
		\arg t_2=\theta_{-1}, \\
		\arg(1-t_2)=0\quad \text{at $t_2=0$}, \\
		\arg(t_1-t_2)=0\quad \text{at $(t_1,t_2)=(1,0)$},
		\end{cases} 
	\\
	&\Delta_{\mathrm{\eight}}:
		\begin{cases}
		\arg t_1=0 \quad \text{at $t_1=1$}, \\
		\arg(1-t_1)=\theta_{-1}-\pi, \\
		\arg t_2=0 \quad \text{at $t_2=1$}, \\
		\arg(1-t_2)=\theta_{-1}-\pi , \\
		\arg(t_1-t_2)=\theta_{-1}-\pi
		\end{cases} 
	\quad
	&\Delta_{\mathrm{\nine}}:
		\begin{cases}
		\arg t_1=0\quad \text{at $t_1=1$}, \\
		\arg(1-t_1)=\theta_{-1}+\pi, \\
		\arg t_2=\theta_{-1}, \\
		\arg(1-t_2)= 0 \quad\text{at $t_2=0$}, \\
		\arg(t_1-t_2)=0\quad \text{at $(t_1,t_2)=(1,0)$},
		\end{cases} 
	\\
	&\Delta_{\mathrm{X}}:
		\begin{cases}
		\arg t_1=0\quad \text{at $t_1=1$}, \\
		\arg(1-t_1)=\theta_{-1}+\pi, \\
		\arg t_2=0 \quad \text{at $t_2=1$}, \\
		\arg(1-t_2)=\theta_{-1}-\pi, \\
		\arg(t_1-t_2)=\theta_{-1}-\pi,
		\end{cases} 
	\quad
	&\Delta_{\mathrm{\eleven}}:
		\begin{cases}
		\arg t_1=0\quad \text{at $t_1=1$}, \\
		\arg(1-t_1)=\theta_{-1}+\pi, \\
		\arg t_2=0 \quad \text{at $t_2=1$}, \\
		\arg(1-t_2)=\theta_{-1}-\pi, \\
		\arg(t_1-t_2)=\theta_{-1},\phantom{\text{at $(t_1,t_2)=(1,0)$}}
		\end{cases} 
	\\
	&\Delta_{\mathrm{\twelve}}:
		\begin{cases}
		\arg t_1=0\quad \text{at $t_1=1$}, \\
		\arg(1-t_1)=\theta_{-1}+\pi, \\
		\arg t_2=0 \quad \text{at $t_2=1$}, \\
		\arg(1-t_2)=\theta_{-1}+\pi, \\
		\arg(t_1-t_2)=\theta_{-1}.
		\end{cases} 	
	\end{align}
Comparing these branches with \eqref{eq:branch1}, \eqref{eq:branch2} and \eqref{eq:branch3}, we have
	\begin{equation}
	\begin{aligned}
	&V_{\Delta_{\mathrm{\six}}}(x)=V_{00}^{(-1)}(x), 
	\quad
	V_{\Delta_{\mathrm{\seven}}}(x)=-e(-2b)V_{10}^{(-1)}(x), 
	\\
	&V_{\Delta_{\mathrm{\nine}}}(x)=V_{10}^{(-1)}(x),
	\quad 
	V_{\Delta_{\mathrm{\eleven}}}(x)=-e(-2b)V_{11}^{(-1)}(x),
	\quad 
	V_{\Delta_{\mathrm{\twelve}}}(x)=V_{11}^{(-1)}(x).
	\end{aligned}
	\end{equation}
Besides, comparing the branches and using Lemma~\ref{lem:DeltaV} yields
	\begin{equation}
	\begin{aligned}
	V_{\Delta_{\mathrm{\eight}}}(x)&=e(-2b)e(-2b)e(-2g)V_{\Delta_{\mathrm{V}}}(x)=e(-4b-g)V_{11}^{(-1)}(x)
	\\
	V_{\Delta_{\mathrm{X}}}(x)&=-e(-2b)e(-2g)V_{\Delta_{\mathrm{V}}}(x)=-e(-2b-g)V_{11}^{(-1)}(x).
	\end{aligned}
	\end{equation}
Substituting them into \eqref{eq:V678910}, we have
	\begin{equation}\label{eq:StokesV00}
	V_{00}^{(0)}(x)
	=V_{00}^{(-1)}(x)+\left(1-e(-2b)\right)V_{10}^{(-1)}(x)+\left(1-e(-2b)\right)\left(1-e(-2b-g)\right)V_{11}^{(-1)}(x).
	\end{equation}
	
Summarizing \eqref{eq:StokesV11}, \eqref{eq:StokesV10} and \eqref{eq:StokesV00}, we obtain the expression for $S_{-1}$.
For $S_0$, take $x\in \Pi_0$ and choose admissible angles $\theta_0\in(-\pi,0)$ and $\theta_1\in(-2\pi,-\pi)$ corresponding to $V_{ij}^{(0)}(x)$ and $V_{ij}^{(1)}(x)$, respectively.
Repeating the above deformation argument with the rotation of the angle from
$\theta_1$ to $\theta_0$, we obtain
	\begin{align}
	&V_{00}^{(1)}(x)=V_{00}^{(0)}(x), \\
	&V_{10}^{(1)}(x)=-\left(1-e(-2a-2b-g)\right)\left(1+e(g)\right)V_{00}^{(0)}(x)+V_{10}^{(0)}(x), \\
	&\begin{aligned}
	V_{11}^{(1)}(x)&=e(-g) \left(1-e(-2a-2b-g)\right)\left(1-e(-2a-2b-2g)\right)V_{00}^{(0)}(x)\\
	&\qquad-e(-g) \left(1-e(-2a-2b-2g)\right)V_{10}^{(0)}(x)+V_{11}^{(0)}(x).
	\end{aligned}
	\end{align}
These identities give the displayed formula for $S_0$.

Finally, the resulting identities between the fundamental solution
matrices extend to all parameters satisfying \eqref{eq:assumpStokes} by meromorphic continuation.
This completes the proof.

\section{Three aspects of the confluent Dotsenko--Fateev system}
\label{sec:threeaspects}

In this section, we examine three aspects of the confluent Dotsenko--Fateev system: an application to the Schr\"{o}dinger equation for quantum Painlev\'{e} V, realization as a restriction of a confluent $F_4$ system, and reduction to a rank-one equation.

\subsection{The Schr\"{o}dinger equation for quantum Painlev\'{e} V}
\label{sec:quantumPainleve}
H. Nagoya \cite{Nagoya2011} studied the Schr\"{o}dinger equations associated with the quantum Painlev\'{e} equations of type $\mathrm{J}=\mathrm{II},\ldots,\mathrm{VI}$.
In the case of $\mathrm{J}=\mathrm{V}$, the Schr\"{o}dinger equation is given by
	\begin{equation}\label{eq:Schrodinger}
	\hbar\frac{\p }{\p t}\Psi(X,t)=\widehat{H}_{\rm{V}}\left(X,\hbar\frac{\p}{\p X},t\right)\Psi(X,t),
	\end{equation}
where $\hbar\in\bbC\setminus\{0\}$ is a parameter and the Hamiltonian $\widehat{H}_{\rm{V}}$ is given by 
	\begin{equation}
	t\widehat{H}_{\rm{V}}=X(X-1)\left(\hbar\frac{\p}{\p X}\right)^2+\left(tX^2-(b_{\rm N}+c_{\rm N}+t)X+b_{\rm N}\right)\hbar\frac{\p}{\p X}+a_{\rm N}(b_{\rm N}+c_{\rm N}-a_{\rm N}+\hbar+t-tX),
	\end{equation}
where $a_{\rm N}, b_{\rm N}, c_{\rm N}\in\bbC$ are parameters. 
This Hamiltonian is obtained from the polynomial Hamiltonian of the fifth
Painlev\'{e} equation by replacing the canonical coordinates $q,p$ by
$X,\hbar\partial/\partial X$, respectively. 

We shall show that the confluent Dotsenko--Fateev system is equivalent to a finite-dimensional reduction of the Schr\"{o}dinger equation \eqref{eq:Schrodinger}. 
For $m\in\bbZ_{\geq 0}$, set
\[
 \mathcal{P}_m:=\bigoplus_{i=0}^{m}\bbC X^i.
\]
By Proposition 2.3 of \cite{Nagoya2011}, the subspace $\mathcal P_m$ is invariant under $\widehat{H}_{\rm V}$ if and only if $a_{\rm N}=m\hbar$.
In this case, the Schr\"{o}dinger equation \eqref{eq:Schrodinger} can be restricted to $\mathcal P_m$.
	Equivalently, writing
	\begin{equation}\label{eq:SchrodingerPolySol}
    	\Psi(X,t)=\sum_{i=0}^{m}\psi_i(t)X^i, 
	\end{equation}
the restricted equation is identified with an $(m+1)$-dimensional linear differential system for the coefficient vector $\psi(t)=(\psi_0(t),\ldots,\psi_m(t))^{T}$. 
We now specialize to $m=2$. 
\begin{proposition}\label{prop:NagoyacDF}
Suppose that $m=2$ and $a_{\rm{N}}=2\hbar$. 
Then the three-dimensional linear differential system obtained by restricting the Schr\"{o}dinger equation \eqref{eq:Schrodinger} to the invariant polynomial subspace $\mathcal P_2$ is equivalent to the confluent Dotsenko--Fateev system \eqref{eq:cDFeq} by
\begin{equation}\label{eq:correspSchrodingertoDF}
	t=x,\quad
	\hbar=\frac{g}{2}, \quad 
	b_{\rm{N}}=a+b+g, \quad
	c_{\rm{N}}=-b.
\end{equation}
\end{proposition}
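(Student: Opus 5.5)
The plan is a direct computation. I would write down the matrix of the restricted Hamiltonian on the basis $1,X,X^2$ of $\mathcal P_2$, and then find an explicit gauge transformation that carries the resulting system to \eqref{eq:cDFeq} under the dictionary \eqref{eq:correspSchrodingertoDF}.

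\emph{Step 1 (the restricted system).} Put $D=\hbar\,\partial/\partial X$. For $i=0,1,2$ I compute $t\widehat H_{\rm V}X^i$ term by term:
\begin{itemize}
\item $X(X-1)D^2X^i=\hbar^2 i(i-1)(X^i-X^{i-1})$;
\item $(tX^2-(b_{\rm N}+c_{\rm N}+t)X+b_{\rm N})DX^i=\hbar i\bigl(tX^{i+1}-(b_{\rm N}+c_{\rm N}+t)X^i+b_{\rm N}X^{i-1}\bigr)$;
\item the potential term contributes $a_{\rm N}(b_{\rm N}+c_{\rm N}-a_{\rm N}+\hbar+t)X^i-a_{\rm N}tX^{i+1}$.
\end{itemize}
With $a_{\rm N}=2\hbar$, the $X^3$ coefficient produced from $X^2$ is $2\hbar t-2\hbar t=0$, which confirms the invariance of $\mathcal P_2$. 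Substituting $\Psi=\sum_i\psi_iX^i$ and comparing coefficients gives
\[
\hbar t\,\frac{d\psi}{dt}=(M_0+tM_1)\psi,
\]
with constant $3\times3$ matrices $M_0,M_1$. Hence
\[
\frac{d\psi}{dt}=\Bigl(\frac{M_1}{\hbar}+\frac{M_0}{\hbar t}\Bigr)\psi.
\]
Reading off the $t$-part, $M_1$ is bidiagonal with diagonal entries $(a_{\rm N}-i\hbar)=(2-i)\hbar$, $i=0,1,2$. So $M_1/\hbar$ has eigenvalues $2,1,0$, matching $\sigma(T)$.

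\emph{Step 2 (normalizing the irregular part).} I take the constant matrix $Q$, which is triangular up to the order reversal $i\mapsto 2-i$, whose columns are eigenvectors of $M_1/\hbar$ ordered so that $Q^{-1}(M_1/\hbar)Q=T=\operatorname{diag}[0,1,2]$. Setting $\psi=QV$ gives $dV/dt=(T+B/t)V$ with $B=Q^{-1}M_0Q/\hbar$.

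\emph{Step 3 (comparison).} I then substitute \eqref{eq:correspSchrodingertoDF} and $t=x$, and compare $B$ with $A$ in \eqref{eq:TandA}.
\begin{itemize}
\item First I check the diagonal entries against $\Lambda=\operatorname{diag}[2a+2b+g,\,a+g,\,-2b-g]$, since these are the formal exponents and are gauge-invariant under constant diagonal gauge.
\item If the diagonals disagree by a common constant, I absorb the difference by a scalar factor $x^{\kappa}$ (this would show up as a trace mismatch).
\item The remaining freedom is a constant diagonal matrix $\operatorname{diag}[d_0,d_1,d_2]$ commuting with $T$. I choose it to match the off-diagonal entries, which requires the products $B_{12}B_{21}$ and $B_{23}B_{32}$ to equal $A_{12}A_{21}=-b(2a+2b+g)$ and $A_{23}A_{32}=-(2b+g)(a+b+g)$, and requires the $(1,3)$ and $(3,1)$ entries to vanish.
\end{itemize}
As a cross-check, I would also compare the spectrum of $B$, which should be $\{0,a,2a+g\}$, the exponents of \eqref{eq:cDFeq} at $x=0$.

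The main obstacle is Step 3. I must verify that, after the triangular change of basis $Q$, the corner entries of $B$ really vanish and the off-diagonal products match. If the naive order of eigenvectors fails, I would try composing with the reversal $X^i\leftrightarrow X^{2-i}$ (that is, ordering by $(2-i)$) and adjusting the normalization of $Q$. Everything else is bookkeeping.
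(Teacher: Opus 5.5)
Your proposal is correct and follows the paper's proof. The paper likewise computes the matrix $M(t)$ of $\widehat{H}_{\rm V}$ on the basis $1,X,X^2$ and conjugates by the eigenvector matrix $P_{\rm N}$ of its $t$-independent part, which is $-1$ times your $Q$ with columns $(0,0,1)^T,(0,1,-1)^T,(1,-2,1)^T$. With this normalization the $(1,3)$ and $(3,1)$ entries vanish and the residue matrix equals $A$ exactly under \eqref{eq:correspSchrodingertoDF}, so your fallback adjustments (scalar factor $x^{\kappa}$, diagonal rescaling, reordering) are never needed.
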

\begin{proof}
We first calculate the action of $\widehat{H}_{\rm V}$ on the invariant subspace $\mathcal{P}_{2}$. 
With respect to the basis $\{1,X,X^2\}$, a direct computation gives
	\begin{equation}
	\left(\widehat{H}_{\rm V}1,\widehat{H}_{\rm V}X,\widehat{H}_{\rm V}X^2\right)
	=\hbar(1,X,X^2)M(t),
	\end{equation}
where
	\begin{equation}
	M(t)=\begin{pmatrix}
	\frac{2(b_{\rm N}+c_{\rm N}-\hbar+t)}{t} & \frac{b_{\rm N}}{t} & 0 \\
	-2 & \frac{b_{\rm N}+c_{\rm N}-2\hbar+t}{t} & \frac{2(b_{\rm{N}}-\hbar)}{t}\\
	0 &  -1  & 0
	\end{pmatrix}.
	\end{equation}
This implies that the restriction of the Schr\"{o}dinger equation \eqref{eq:Schrodinger} to the invariant polynomial subspace $\mathcal P_2$ is 
	\begin{equation}\label{eq:SchrodingerRest}
	\frac{d\psi}{dt}=M(t)\psi,
	\end{equation}
where $\psi(t)=(\psi_0(t),\psi_1(t),\psi_2(t))^{T}$. 
Setting
	\begin{equation}\label{eq:SchrodingerP}
	\psi=P_{\rm{N}}\tilde{\psi},
	\quad 
	P_{\rm{N}}=\begin{pmatrix}
	0 & 0 & -1 \\
	0 & -1 & 2 \\
	-1 & 1 & -1
	\end{pmatrix},
	\end{equation} 
the system \eqref{eq:SchrodingerRest} is transformed into 
	\begin{equation}\label{eq:Schrodingerreduction}
	\frac{d\widetilde{\psi}}{dt}=(P_{\rm{N}}^{-1}M(t)P_{\rm{N}})\widetilde{\psi},
	\end{equation}
where
	\begin{equation}
	P_{\rm{N}}^{-1}M(t)P_{\rm{N}}=\begin{pmatrix}
	0 &  & \\
	& 1 & \\
	& & 2
	\end{pmatrix}
	+
	\frac{1}{t}
	\begin{pmatrix}
	-2\hbar+2b_{\rm{N}} & c_{\rm{N}} & 0 \\
	-2\hbar+2b_{\rm{N}} & b_{\rm{N}}+c_{\rm{N}} & -2\hbar+2c_{\rm{N}}\\
	0 & b_{\rm{N}} & -2\hbar+2c_{\rm{N}}
	\end{pmatrix}.
	\end{equation}
By setting the parameters as in \eqref{eq:correspSchrodingertoDF}, this system coincides with the confluent Dotsenko--Fateev system \eqref{eq:cDFeq}. 
\end{proof}
Nagoya \cite{Nagoya2011} also constructed polynomial solutions of the form \eqref{eq:SchrodingerPolySol} to the Schr\"{o}dinger equation \eqref{eq:Schrodinger}. 
In other words, he gave solutions to the finite-dimensional reduction of \eqref{eq:Schrodinger} to $\mathcal{P}_m$. 
We now relate our results to Nagoya's work. 
For this purpose, we recall his integral formula in the case $m=2$.
\begin{theorem}[The case of $\rm{J}=\rm{V}$ and $m=2$ of Theorem 2.5 of \cite{Nagoya2011}]
The integral 
	\begin{equation}\label{eq:SchrodingerPoly}
	\Psi_{\Delta}(X,t)=\int_{\Delta} (u_1-u_2)^{2\hbar}\prod_{i=1}^{2}u_i^{-b_{\rm{N}}-1}(1-u_i)^{-c_{\rm{N}}-1}e^{u_i t}(X-u_i)\,du_1\wedge du_2
	\end{equation}
gives a solution to the Schr\"{o}dinger equation \eqref{eq:Schrodinger} for the quantum Painlev\'{e} equation of type {\rm V} with $a_{\rm{N}}=2\hbar$, where $\Delta$ is an appropriate cycle. 	
\end{theorem}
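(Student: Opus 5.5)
The plan is to reduce the statement to the confluent Dotsenko--Fateev integral representation \eqref{eq:cDFint}, which is already known to solve \eqref{eq:cDFeq}, and then to transport it through the equivalence of Proposition~\ref{prop:NagoyacDF}. Under \eqref{eq:correspSchrodingertoDF} we have $2\hbar=g$, $-b_{\rm N}-1=-(a+b+g)-1$ and $-c_{\rm N}-1=b-1$. The integrand of \eqref{eq:SchrodingerPoly} is therefore
\[
\Phi(u_1,u_2;t)\,\prod_{i=1}^2\frac{X-u_i}{u_i(1-u_i)},
\]
where $\Phi$ is \eqref{eq:integrand}. Expanding $(X-u_1)(X-u_2)=X^2-(u_1+u_2)X+u_1u_2$, the coefficient vector $\psi(t)$ is the integral of $\Phi$ against the symmetric rational $2$-forms
\[
\frac{du_1\wedge du_2}{u_1u_2(1-u_1)(1-u_2)},\quad -\frac{(u_1+u_2)\,du_1\wedge du_2}{u_1u_2(1-u_1)(1-u_2)},\quad \frac{du_1\wedge du_2}{(1-u_1)(1-u_2)}
\]
for $\psi_2,\psi_1,\psi_0$ respectively.

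The first key step is to show that these three forms are constant linear combinations of $\varphi_1,\varphi_2,\varphi_3$ of \eqref{eq:phi}, modulo nothing. Partial fractions give $\frac{1}{u(1-u)}=\frac1u+\frac1{1-u}$, so the first form equals $\varphi_1-\varphi_2+\varphi_3$. For the second form we use $\frac{u_1+u_2}{u_1u_2}=\frac1{u_1}+\frac1{u_2}$, which turns it into $-\bigl(\frac{1}{u_1(1-u_1)(1-u_2)}+\frac{1}{u_2(1-u_1)(1-u_2)}\bigr)$. Expanding with $\frac1{u(1-u)}=\frac1u+\frac1{1-u}$ gives $\varphi_2-2\varphi_3$. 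The third form is $\varphi_3$.

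Hence $\psi=Q\,V_\Delta(t)$ for an explicit constant matrix $Q$ whose rows read off these coefficients. I expect $Q$ to coincide with $P_{\rm N}$ of \eqref{eq:SchrodingerP}, possibly up to sign conventions:
\[
\psi_0=\int\Phi\varphi_3,\qquad \psi_1=\int\Phi(\varphi_2-2\varphi_3),\qquad \psi_2=\int\Phi(\varphi_1-\varphi_2+\varphi_3).
\]
The second key step is to check that this linear relation is exactly the gauge transformation in the proof of Proposition~\ref{prop:NagoyacDF}, that is, $\psi=P_{\rm N}\widetilde\psi$ with $\widetilde\psi$ a constant multiple of $V_\Delta$. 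Comparing rows, $P_{\rm N}$ applied to $-V$ gives $(V_3,\,V_2-2V_3,\,V_1-V_2+V_3)$, so $\widetilde\psi=-V_\Delta$. Since $V_\Delta$ solves \eqref{eq:cDFeq} for the admissible cycles of Section~\ref{sec:asymptoticanalysis}, $\psi$ then solves \eqref{eq:SchrodingerRest}. Consequently $\Psi_\Delta$ lies in $\mathcal P_2$ and solves \eqref{eq:Schrodinger} with $a_{\rm N}=2\hbar$.

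The main obstacle is bookkeeping rather than analysis. We must match the sign and ordering conventions of $P_{\rm N}$ exactly, and we must specify what "appropriate cycle" means: the cycles $\Delta^{\theta}_{ij}$ with the branches \eqref{eq:branch1}--\eqref{eq:branch3} and $\cos(\arg t+\theta)<0$. For these cycles, differentiation under the integral sign and integration by parts are justified under \eqref{eq:assumpconv}, and we then extend meromorphically. If the matrix identity failed by a factor, I would fall back on a direct verification: differentiate \eqref{eq:SchrodingerPoly} in $t$, apply $\widehat H_{\rm V}$, and show the difference is the integral of an exact form $d(\Phi\,\omega)$ whose boundary terms vanish on these cycles.
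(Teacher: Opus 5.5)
Your argument is correct, and the bookkeeping checks out. The coefficients of $1,X,X^2$ are the integrals of $\varphi_3$, $\varphi_2-2\varphi_3$ and $\varphi_1-\varphi_2+\varphi_3$, so $\psi=-P_{\rm N}V_\Delta(t)$, i.e.\ $\widetilde\psi=-V_\Delta$. This is precisely the identity \eqref{eq:Nagoya-cDF-integrand}--\eqref{eq:Nagoya-cDF-solution} that the paper records right after this theorem.

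The logical route differs from the paper's. The paper does not prove the statement. It quotes Nagoya's Theorem~2.5, which is established there for arbitrary $m$ and independently of the confluent Dotsenko--Fateev system. It then uses \eqref{eq:Nagoya-cDF-solution} in the opposite direction, to transport the Stokes matrices of Theorem~\ref{thm:StokesMat} to Nagoya's polynomial solutions. You instead derive the $m=2$ case from Proposition~\ref{prop:NagoyacDF} together with the fact that \eqref{eq:cDFint} solves \eqref{eq:cDFeq}. This is not circular, since neither ingredient uses Nagoya's integral formula.

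Your route gives a short proof once the paper's machinery is in place, but it has limits. It covers only $m=2$, and only those chains for which $V_\Delta$ is known to solve \eqref{eq:cDFeq}, e.g.\ the $\Delta^{\theta}_{ij}$ under \eqref{eq:assumpconv}. The analytic content is also merely relocated. The paper only asserts that \eqref{eq:cDFint} solves \eqref{eq:cDFeq} ``by differentiation under the integral sign and integration by parts'', which is the same kind of computation as your fallback direct verification.

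Two points you should state explicitly. First, \eqref{eq:correspSchrodingertoDF} is invertible ($g=2\hbar$, $b=-c_{\rm N}$, $a=b_{\rm N}+c_{\rm N}-2\hbar$), so every triple $(\hbar,b_{\rm N},c_{\rm N})$ is reached. Second, the branches of $u_i^{-b_{\rm N}-1}(1-u_i)^{-c_{\rm N}-1}$ are those of $\Phi$ times the single-valued factor $1/(u_i(1-u_i))$, so the conventions \eqref{eq:branch1}--\eqref{eq:branch3} carry over unchanged.
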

We refer to solutions $\Psi(X,t)$ of \eqref{eq:Schrodinger} such that $\Psi(\cdot,t)\in\mathcal P_2$ for each $t$ as \emph{polynomial solutions in $\mathcal P_2$}.
In particular, $\Psi_\Delta(X,t)$ is such a solution.
To ensure convergence of the integrals, we assume for the moment that
	\begin{equation}
	\re(a_{\rm N})>0, \quad \re(b_{\rm N}),~\re(c_{\rm N})<0.
	\end{equation}
Under the parameter correspondence \eqref{eq:correspSchrodingertoDF}, these conditions are equivalent to \eqref{eq:assumpconv}.
For comparison with Nagoya's integral formula, we rewrite the integral \eqref{eq:cDFint} using $u_1,u_2$ as the integration variables:
	\begin{equation}
	V_{\Delta}(x)=\int_{\Delta}(u_1-u_2)^{g}\prod_{i=1}^{2}u_i^{-(a+b+g)}(1-u_i)^{b}e^{xu_i}	\vec{\varphi}
	\end{equation}
and $\vec{\varphi}=(\varphi_1,\varphi_2,\varphi_3)^T$ is given by
	\begin{equation}
	\begin{aligned}
	\varphi_1&=\frac{du_1\wedge du_2}{u_1u_2}, \\
	\varphi_2&=-\frac{du_1\wedge du_2}{u_1(1-u_2)}-\frac{du_1\wedge du_2}{u_2(1-u_1)}, \\
	\varphi_3&=\frac{du_1\wedge du_2}{(1-u_1)(1-u_2)}.
\end{aligned}
	\end{equation}
Let $P_{\rm{N}}$ denote the constant matrix \eqref{eq:SchrodingerP} appearing in the proof of Proposition~\ref{prop:NagoyacDF}. 
A direct computation gives
	\begin{equation}\label{eq:Nagoya-cDF-integrand}
 	-(1,X,X^2)P_{\rm{N}}\vec{\varphi}
	=
 	\frac{(X-u_1)(X-u_2)}{u_1u_2(1-u_1)(1-u_2)}
	 \,du_1\wedge du_2.
	\end{equation}
Hence, under the parameter correspondence \eqref{eq:correspSchrodingertoDF}, we obtain
	\begin{equation}\label{eq:Nagoya-cDF-solution}
	 \Psi_{\Delta}(X,t)
 	=
 	-(1,X,X^2)P_{\rm{N}}V_{\Delta}(t),
	\end{equation}
provided that the same cycle and branches are chosen on both sides.
By using this correspondence, we can construct a fundamental system of solutions of the degree-two polynomial reduction of the Schr\"{o}dinger equation \eqref{eq:Schrodinger}, together with the associated Stokes matrices. 
For $k\in\bbZ$, we set
	\begin{equation}
	\Psi_{ij}^{(k)}(X,t):=-(1,X,X^2)P_{\rm{N}}V_{ij}^{(k)}(t)\quad (ij\in\{00,10,11\}).
	\end{equation}
Then $\Psi_{00}^{(k)}(X,t)$, $\Psi_{10}^{(k)}(X,t)$ and $\Psi_{11}^{(k)}(X,t)$ are defined on $\Sigma_{k}$ and form a basis of the space of polynomial solutions in $\mathcal{P}_2$.
This follows from the fact that $\mathcal{V}^{(k)}(t)$ is a fundamental solution matrix on $\Sigma_{k}$.  
As explained in Section~\ref{sec:SecandAsy},
the matrix $\calV^{(k)}(t)$ admits a meromorphic continuation in $(a,b,g)$.
Through \eqref{eq:Nagoya-cDF-solution}, this induces a
meromorphic continuation of $\Psi_{ij}^{(k)}(X,t)$ in the parameters.
We use the same notation for these continuations.
The continued solutions form a basis of the space of
polynomial solutions in $\mathcal P_2$ under
	\begin{equation}\label{eq:assumpStokesSch}
	\begin{aligned}
	b_{\rm N},~~c_{\rm N},~~
	b_{\rm N}-\frac{a_{\rm N}}{2},~~
	c_{\rm N}-\frac{a_{\rm N}}{2}
	\notin\mathbb Z_{\geq0},\quad	a_{\rm N}\notin\{-1,-3,-5,\ldots\}.
	\end{aligned}
	\end{equation}
These conditions are equivalent to \eqref{eq:assumpStokes}
under the parameter correspondence
\eqref{eq:correspSchrodingertoDF}.

From Theorem~\ref{thm:StokesMat} and the correspondence \eqref{eq:correspSchrodingertoDF}, we immediately obtain the following.
\begin{corollary}\label{cor:StokesSch}
Assume $a_{\rm{N}}=2\hbar\neq0$ and \eqref{eq:assumpStokesSch}.
For each $k\in \mathbb{Z}$, it holds that
	\begin{equation}
	\left(
	\Psi_{00}^{(k+1)}(X,t), \Psi_{10}^{(k+1)}(X,t), \Psi_{11}^{(k+1)}(X,t)
	\right)
	=\left(
	\Psi_{00}^{(k)}(X,t), \Psi_{10}^{(k)}(X,t), \Psi_{11}^{(k)}(X,t)
	\right)S_{k}
	\end{equation}
on $\Sigma_{k}\cap\Sigma_{k+1}$. 
Here $\{S_k\}_{k\in\bbZ}$ are the Stokes matrices determined by
	\begin{align}
	&S_{-1}=\begin{pmatrix}
	1 & 0 & 0 \\
	1-e(2c_{\rm{N}}) & 1 & 0 \\
	\left(1-e(2c_{\rm{N}}))(1-e(2c_{\rm{N}}-2\hbar)\right) & \left(1-e(2c_{\rm{N}}-2\hbar)\right)\left(1+e(2\hbar)\right) & 1
	\end{pmatrix}, 
	\\
	&S_{0}=\begin{pmatrix}
	 1 & -\left(1-e(2\hbar-2b_{\rm{N}})\right)\left(1+e(2\hbar)\right) &
  e(-2\hbar) \left(1-e(2\hbar-2b_{\rm{N}})\right)\left(1-e(-2b_{\rm{N}})\right) \\
 	0 & 1 & -e(-2\hbar) \left(1-e(-2b_{\rm{N}})\right) \\
 	0 & 0 & 1 
	\end{pmatrix}
	\end{align}
and the relation \eqref{eq:Stokespreiod}, where
	$
	\Lambda=
	\operatorname{diag}
	\left[
	2b_{\rm N}-2\hbar,\,
	b_{\rm N}+c_{\rm N},\,
	2c_{\rm N}-2\hbar
	\right]
	$.
\end{corollary}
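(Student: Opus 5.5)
The plan is to transport Theorem~\ref{thm:StokesMat} through the linear correspondence \eqref{eq:Nagoya-cDF-solution}. The corollary then reduces to two checks: a linearity argument, and a substitution of parameters into the Stokes data.

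\textbf{Step 1: the Stokes relation transfers.} By definition, $\Psi_{ij}^{(k)}(X,t)=-(1,X,X^2)P_{\rm N}V_{ij}^{(k)}(t)$. Stacking the three solutions gives
\[
\bigl(\Psi_{00}^{(k)},\Psi_{10}^{(k)},\Psi_{11}^{(k)}\bigr)=-(1,X,X^2)P_{\rm N}\,\calV^{(k)}(t).
\]
Theorem~\ref{thm:StokesMat} gives $\calV^{(k+1)}(t)=\calV^{(k)}(t)S_k$ on $\Pi_k=\Sigma_k\cap\Sigma_{k+1}$. The prefactor $-(1,X,X^2)P_{\rm N}$ acts on the left, so multiplying that identity by it yields the claimed relation at once. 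This holds first under the convergence conditions and then by meromorphic continuation.

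\textbf{Step 2: the hypotheses match.} I check that the assumptions of the corollary correspond to those of Theorem~\ref{thm:StokesMat} under \eqref{eq:correspSchrodingertoDF}. The paper already states that \eqref{eq:assumpStokesSch} is equivalent to \eqref{eq:assumpStokes}. The condition $\hbar\neq0$ makes the inverse map $g=2\hbar$, $b=-c_{\rm N}$, $a=b_{\rm N}+c_{\rm N}-2\hbar$ well defined.

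\textbf{Step 3: rewrite $S_{-1}$, $S_0$, $\Lambda$ in Nagoya's parameters.} Substituting the inverse map gives the basic combinations:
\[
2b=-2c_{\rm N},\qquad 2b+g=2\hbar-2c_{\rm N},\qquad a+b+g=b_{\rm N},
\]
\[
2a+2b+g=2b_{\rm N}-2\hbar,\qquad 2a+2b+2g=2b_{\rm N}.
\]
Inserting these into \eqref{eq:MainStokes1} and \eqref{eq:MainStokes}:
\begin{itemize}
\item $e(-2b)$ becomes $e(2c_{\rm N})$;
\item $e(-2b-g)$ becomes $e(2c_{\rm N}-2\hbar)$;
\item $e(\pm g)$ becomes $e(\pm2\hbar)$;
\item $e(-2a-2b-g)$ becomes $e(2\hbar-2b_{\rm N})$;
\item $e(-2a-2b-2g)$ becomes $e(-2b_{\rm N})$.
\end{itemize}
This reproduces the displayed $S_{-1}$ and $S_0$. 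Likewise $\Lambda=\operatorname{diag}[2a+2b+g,\,a+g,\,-2b-g]$ becomes $\operatorname{diag}[2b_{\rm N}-2\hbar,\,b_{\rm N}+c_{\rm N},\,2c_{\rm N}-2\hbar]$, using $a+g=b_{\rm N}+c_{\rm N}$. The remaining $S_k$ then follow from \eqref{eq:Stokespreiod}.

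\textbf{Where an error could enter.} The argument has no real obstacle; the only risk is bookkeeping in the exponent substitutions of Step~3. I would verify each entry separately, in particular the $(1,3)$ entry of $S_0$ and the $(3,1)$ entry of $S_{-1}$.
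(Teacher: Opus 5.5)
Your proposal is correct and follows the paper's own route: you left-multiply $\calV^{(k+1)}(t)=\calV^{(k)}(t)S_k$ by $-(1,X,X^2)P_{\rm N}$ and rewrite $S_{-1}$, $S_0$, $\Lambda$ via \eqref{eq:correspSchrodingertoDF}, and all your exponent substitutions check out. One small correction: the inverse parameter map is affine and defined for every $\hbar$. The hypothesis $\hbar\neq0$ is needed for a different reason: it lets you divide $\hbar\,\partial_t\Psi=\widehat{H}_{\rm V}\Psi$ by $\hbar$ to reach $d\psi/dt=M(t)\psi$, which is what makes the $\Psi_{ij}^{(k)}$ genuine solutions of the Schr\"{o}dinger equation.
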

Nagoya's integral formula \cite[Theorem 2.5]{Nagoya2011} is available for arbitrary $m\in\mathbb Z_{\geq0}$, whereas the present identification with the confluent Dotsenko--Fateev system concerns the case $m=2$.
It would be interesting to investigate whether analogous explicit Stokes analysis can be carried out for higher-degree polynomial reductions.

\subsection{Confluence and restriction of the $F_4$ system}

Let us show the relation between the confluent Dotsenko--Fateev system and the confluent $F_4$ system. 
We first recall the relation between the Dotsenko--Fateev equation and the linear Pfaffian system associated with Appell's hypergeometric function $F_4$. 
Throughout this subsection, the parameters appearing below are assumed to be generic.
Appell's hypergeometric function $F_4(\alpha,\beta;\gamma,\gamma';x,y)$ is defined by
	\begin{equation}\label{eq:F4}
	F_4(\alpha,\beta;\gamma,\gamma';x,y)=\sum_{m,n=0}^{\infty} \frac{(\alpha)_{m+n}(\beta)_{m+n}}{m!n!(\gamma)_m (\gamma')_n}x^m y^n,
	\end{equation}
where $(\alpha)_n=\Gamma(\alpha+n)/\Gamma(\alpha)$. 
It is known that this function satisfies a rank-four Pfaffian system in two variables.  
M. Kato \cite{Kato1988} derived the rank-four Pfaffian system 
	\begin{equation}\label{eq:F4Pfaff}
	dU=\left(B_1\frac{dx}{x}+B_2\frac{dx}{x-1}+B_3\frac{d(x-y)}{x-y}+B_4\frac{dy}{y}+B_5\frac{dy}{y-1}\right)U
	\end{equation}
as a pull-back of the Pfaffian system for $F_4$, where $B_1,\ldots,B_5 \in \mathrm{Mat}(4,\bbC)$. 
Specifically, the system \eqref{eq:F4Pfaff} is associated with $F_4(\alpha,\beta;\gamma,\gamma';xy,(x-1)(y-1))$. 
The Pfaffian system \eqref{eq:F4Pfaff} has logarithmic singularities along the hyperplanes
	\begin{equation}
	\{x=0\}\cup\{x=1\}\cup\{x=y\}\cup\{y=0\}\cup\{y=1\}.
	\end{equation}
Here the residue matrix $B_3$ at $x=y$ satisfies $B_3\sim\operatorname{diag}[0,0,0,2(\gamma+\gamma'-\alpha-\beta-1)]$. 
This implies that the restriction of the system \eqref{eq:F4Pfaff} to the singular locus $x=y$ is an ordinary differential equation of rank three. 
Here the ``restriction" refers to the differential equation satisfied by the solutions of the system \eqref{eq:F4Pfaff} which are holomorphic at $x=y$. 
Kato \cite{Kato1995} derived this restricted system explicitly, and studied the connection problem and monodromy group. 
After that, Haraoka \cite{Haraoka2013} pointed out that the restricted system coincides with the Dotsenko--Fateev system \eqref{eq:DFeq} by setting 
	\begin{equation}
	\begin{aligned}
	\alpha=-c,\quad \beta=-a-b-c-\frac{g}{2}, \quad
	\gamma=1-a-c-\frac{g}{2},\quad \gamma'=-b-c-\frac{g}{2}.
	\end{aligned}
	\end{equation}

Then it is natural to ask whether this relation is compatible with the confluence considered in Section~\ref{sec:conf}. 
Haraoka \cite{Haraoka2024} considered the confluence for the Pfaffian system of $F_4$. 
He first used middle convolution to construct the Pfaffian system of rank four, which is equivalent to the one associated with
	\begin{equation}
	y^{\alpha_1+\alpha_3}(y-1)^{\alpha_2+\alpha_3}F_4(\alpha_3-\beta,\alpha_1+\alpha_2+\alpha_3;1+\alpha_1+\alpha_3,\alpha_2+\alpha_3;xy,(y-1)(x-1)).
	\end{equation}
For that Pfaffian system, he then considered the confluence
	\begin{equation}\label{eq:Haraokaconf}
	\left(\alpha_1,\alpha_3,x,y\right)\mapsto (\alpha_1-\gamma\varepsilon^{-1},\gamma\varepsilon^{-1},\varepsilon x,\varepsilon y),\quad \varepsilon\to 0, 
	\end{equation}
which makes $\{x=1\}\to\{x=\infty\}$ and $\{y=1\}\to\{y=\infty\}$. 
The resulting confluent $F_4$ system is given by 
	\begin{equation}\label{eq:HaraokaPfaff}
	dU=\left(C_1\frac{dx}{x}+C_2\,dx+C_3\frac{d(x-y)}{x-y}+C_4\frac{dy}{y}+C_5\,dy\right)U
	\end{equation}
with 
	\begin{equation}
	\begin{aligned}
	C_1&=\begin{pmatrix}
	\alpha_2 & \alpha_1+\alpha_2 & 0 & 0 \\
	-\alpha_2 & -\alpha_1-\alpha_2 & 0 & 0 \\
	0 & 0 & \alpha_2+\beta & \alpha_1+\alpha_2+\beta \\
	0 & 0 & -\alpha_2-\beta & -\alpha_1-\alpha_2-\beta
	\end{pmatrix}, 
	\quad
	C_2=\begin{pmatrix}
	\gamma & & & \\
	& 0 & & \\
	& & \gamma & \\
	& & & 0
	\end{pmatrix},
	\\
	C_3&=\begin{pmatrix}
	\beta & 0 & 0 & \beta \\
	0 & 0 & 0 & 0\\
	0 & 0 & 0 & 0\\
	\beta & 0 & 0 & \beta
	\end{pmatrix},
	\quad
	C_4=\begin{pmatrix}
	-\alpha_2-\beta & 0 &\alpha_2+\beta & 0 \\
	0 & -\alpha_2 & 0 & \alpha_2 \\
	-\alpha_1-\alpha_2-\beta & 0 & \alpha_1+\alpha_2+\beta & 0 \\
	0& -\alpha_1-\alpha_2 & 0 & \alpha_1+\alpha_2
	\end{pmatrix},
	\\
	C_5&=\begin{pmatrix}
	-\gamma & & & \\
	& -\gamma & & \\
	& & 0 & \\
	& & & 0
	\end{pmatrix}. 
	\end{aligned}
	\end{equation}
This system has logarithmic singularities along $\{x=0\}\cup\{x=y\}\cup\{y=0\}$ and irregular singularities along $\{x=\infty\}\cup\{y=\infty\}$. 
We now show that, after a scalar gauge transformation,
the restriction of \eqref{eq:HaraokaPfaff} to $x=y$
is equivalent to the confluent Dotsenko--Fateev system
\eqref{eq:cDFeq}.
For this purpose, we set
\begin{equation}\label{eq:HaraokaScalarGauge}
   \widetilde{U}(x,y)=y^{-\alpha_1}e^{\gamma y}U(x,y).
\end{equation}
Then $\widetilde{U}(x,y)$ satisfies
\begin{equation}\label{eq:HaraokaPfaffgauge}
    d\widetilde{U}=
    \left(
    C_1\frac{dx}{x}+C_2\,dx
    +C_3\frac{d(x-y)}{x-y}
    +(C_4-\alpha_1 I)\frac{dy}{y}
    +(C_5+\gamma I)\,dy
    \right)\widetilde{U}.
\end{equation}
We consider the restriction of this system to the singular locus $x=y$. 
\begin{proposition}\label{prop:restcF4}
	The restriction of the Pfaffian system \eqref{eq:HaraokaPfaffgauge} to the singular locus $x=y\,(=:z)$ is given by the system of linear ordinary differential equations 
	\begin{equation}\label{eq:restriction}
	\frac{dV}{dz}=\left(D_{\infty}+\frac{D_{1}}{z}\right)V,
	\end{equation}
	where
	\begin{equation}
	D_{\infty}=\begin{pmatrix}
	0 & & \\
	& \gamma & \\
	 &  & 2\gamma
	\end{pmatrix},
	\quad
	D_{1}=\begin{pmatrix}
	-2 \left(\alpha _1+\alpha _2\right) & \alpha _2 & 0 \\
	 -2 \left(\alpha _1+\alpha _2\right) & -\alpha _1-\beta  & 2 \left(\alpha _2+\beta
   \right) \\
 	0 & -\alpha _1-\alpha _2-\beta  & 2 \left(\alpha _2+\beta \right)
	\end{pmatrix}.
	\end{equation}
	Moreover, this equation coincides with the confluent Dotsenko--Fateev system \eqref{eq:cDFeq} by 
	\begin{equation}\label{eq:correspF4toDF}
	z=\frac{x}{\gamma},\quad 
	\alpha_1=-a-\frac{g}{2},\quad 
	\alpha_2=-b,\quad 
	\beta=-\frac{g}{2}.
	\end{equation}
\end{proposition}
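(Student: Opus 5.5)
The plan is to follow the standard restriction procedure for a logarithmic Pfaffian system along a divisor. I will use coordinates $z=y$ and $w=x-y$, so that the locus $x=y$ becomes $\{w=0\}$.

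First I rewrite \eqref{eq:HaraokaPfaffgauge} in these coordinates. Since $dx=dz+dw$ and $dy=dz$, the system takes the form
\[
d\widetilde U=\Big(\Omega_z\,dz+\Omega_w\,dw\Big)\widetilde U,
\]
with
\[
\Omega_w=\frac{C_1}{z+w}+C_2+\frac{C_3}{w},\qquad
\Omega_z=\frac{C_1}{z+w}+C_2+\frac{C_4-\alpha_1I}{z}+C_5+(\gamma+0)I .
\]
Note that $C_3$ occurs only in the $dw$-component.

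Next I diagonalize the residue $C_3$. Its image is spanned by $(1,0,0,1)^T$ and its kernel is spanned by $e_2$, $e_3$ and $e_1-e_4$. Its only nonzero eigenvalue is $2\beta$, which is nonintegral for generic parameters.

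A solution holomorphic along $w=0$ satisfies $C_3\widetilde U|_{w=0}=0$, by taking the residue in the $w$-equation. So its restriction $\widetilde U(z,0)$ lies in the three-dimensional space $\ker C_3$. I parametrize this space as $\widetilde U(z,0)=QV(z)$, where $Q$ is the $4\times3$ matrix whose columns form a basis of $\ker C_3$. I also choose a left inverse $Q^{+}$ with $Q^{+}(1,0,0,1)^T=0$, which is the projection along the image of $C_3$.

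Restricting the $dz$-component of the system to $w=0$ gives a restricted system whose coefficient is
\[
Q^{+}\Big(\frac{C_1+C_4-\alpha_1 I}{z}+C_2+C_5+\gamma I\Big)Q .
\]
Here $\operatorname{Im}C_3$ is not invariant, so I must check that the $dz$-component preserves $\ker C_3$ modulo terms that vanish; equivalently, the kernel component of $\partial_z\widetilde U(z,0)$ is determined by this formula. I will justify this via the integrability condition, which at $w=0$ gives $[C_3,\,\Omega_z|_{w=0}]$ compatibility, i.e. $C_3(C_1+C_4+\dots)$ restricted to $\ker C_3$ lands back consistently. Alternatively, I can simply quote Kato/Haraoka's general restriction lemma: the restricted connection is the induced connection on $\ker C_3$ projected along $\operatorname{Im}C_3$.

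With $C_2+C_5+\gamma I=\operatorname{diag}[\gamma,\gamma,2\gamma,\gamma]$, I then choose the basis of $\ker C_3$ so that the constant part becomes $\operatorname{diag}[0,\gamma,2\gamma]$. Concretely, I take the columns $e_1-e_4$ (eigenvalue $0$ after the shift? — here I will adjust: the constant parts must come out as $0,\gamma,2\gamma$, so I may need to recombine the entries using the $\gamma I$ shift and the projection), together with $e_2$ and $e_3$, possibly with signs. I then compute $Q^{+}(C_1+C_4-\alpha_1I)Q$ and compare the result with $D_1$. This is a direct $4\times4$ computation.

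Finally, for the identification with \eqref{eq:cDFeq}, I substitute $z=x/\gamma$. This turns $D_\infty$ into $T$ and leaves $D_1/z\,dz$ as $D_1/x\,dx$. Plugging \eqref{eq:correspF4toDF} into $D_1$ should then reproduce $A$. For example, $-2(\alpha_1+\alpha_2)=2a+2b+g$, $\alpha_2=-b$, $-\alpha_1-\beta=a+g$, $2(\alpha_2+\beta)=-2b-g$, and $-\alpha_1-\alpha_2-\beta=a+b+g$, all of which match.

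The main obstacle is the middle step: choosing the basis and projection correctly so that the constant term is exactly diagonal with entries $0,\gamma,2\gamma$ while $D_1$ has the displayed shape. If the naive basis fails, I would first try basis vectors mixing $e_1$ and $e_4$ with $e_2,e_3$, and justify the use of the projection through the integrability condition of \eqref{eq:HaraokaPfaffgauge}.
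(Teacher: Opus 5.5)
Your strategy is the paper's. The paper conjugates by the matrix $P$ whose columns are $e_2$, $-\tfrac12(e_1-e_4)$, $-e_3$ (a basis of $\ker C_3$) and $e_1+e_4$ (spanning $\operatorname{Im}C_3$). It checks that $P^{-1}C_3P=\operatorname{diag}[0,0,0,2\beta]$ and that $P^{-1}(C_2+C_5+\gamma I)P$ and $P^{-1}(C_1+C_4-\alpha_1 I)P$ are block diagonal, and then reads off the $3\times3$ block. That block is exactly your $Q^{+}(\cdots)Q$ with $Q=(e_2,-\tfrac12(e_1-e_4),-e_3)$. Your justification through holomorphic solutions ($C_3\widetilde U|_{w=0}=0$) is more explicit than the paper's, which just exhibits the block form. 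Your rescaling $z=x/\gamma$ and the entrywise comparison with $A$ are correct.

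However, the ``main obstacle'' you identify comes from an arithmetic slip. In fact $C_2+C_5+\gamma I=\operatorname{diag}[\gamma,0,2\gamma,\gamma]$, not $\operatorname{diag}[\gamma,\gamma,2\gamma,\gamma]$, since the second entry is $0-\gamma+\gamma$. With the correct value, $e_2$, $e_1-e_4$, $e_3$ are eigenvectors with eigenvalues $0,\gamma,2\gamma$, so the naive basis works immediately. Your proposed fallback of mixing basis vectors could not have repaired a mismatch anyway, because changing the basis of $\ker C_3$ does not change the spectrum of the induced map.

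Your claim that $\operatorname{Im}C_3$ is not invariant is also false. The $w^{-1}$-coefficient of the integrability condition gives $[\Omega_z|_{w=0},C_3]=0$, so $\Omega_z|_{w=0}$ preserves both eigenspaces of $C_3$. On $e_1+e_4$ it acts by $\gamma+(-\alpha_1-\beta)/z$, which gives the paper's $1\times1$ blocks, so no projection subtlety arises.

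Finally, signs alone will not reproduce the displayed $D_1$. With the unscaled basis $(e_2,e_1-e_4,e_3)$ you get a diagonal conjugate of $D_1$ whose $(1,2)$ entry is $-2\alpha_2$. You need the relative scaling $(e_2,-\tfrac12(e_1-e_4),-e_3)$, up to an overall constant.
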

\begin{proof}
	Set 
	\begin{equation}
	\widetilde{U}(x,y)=P\widetilde{V}(x,y),
	\quad
	P=
	\begin{pmatrix}
	0 & -\frac{1}{2} & 0 & 1 \\
 	1 & 0 & 0 & 0 \\
 	0 & 0 & -1 & 0 \\
 	0 & \frac{1}{2} & 0 & 1
	\end{pmatrix}.
	\end{equation}
Then the residue matrix $C_3$ at $x=y$ becomes the diagonal matrix $P^{-1}C_3P=\operatorname{diag}[0,0,0,2\beta]$ and we have
	\begin{align}
	&P^{-1}\left(C_2+(C_5+\gamma I)\right)P=
	\left(
	\begin{array}{ccc|c}
	& & & 0 \\
	& D_{\infty} & & 0\\
	& & & 0 \\ \hline 
	0 & 0 & 0 & \gamma
	\end{array}
	\right) 
	\\
	&P^{-1}\left(C_1+(C_4-\alpha_1 I)\right)P=
	\left(
	\begin{array}{ccc|c}
	& & & 0 \\
	& D_{1} & & 0\\
	& & & 0 \\ \hline 
	0 & 0 & 0 & -\beta-\alpha_1
	\end{array}
	\right).
	\end{align}
This shows that the restriction is given by the system \eqref{eq:restriction}. 
Under the substitutions \eqref{eq:correspF4toDF}, the system \eqref{eq:restriction} coincides with the confluent Dotsenko--Fateev system \eqref{eq:cDFeq}.
\end{proof}
\begin{remark}
Here we explain where the scalar gauge transformation \eqref{eq:HaraokaScalarGauge} comes from.
Comparing Kato's $F_4$ system \eqref{eq:F4Pfaff} with Haraoka's
$F_4$ system obtained by middle convolution, one finds that, up to the corresponding parameter identification, the two systems are related by the scalar gauge factor
	\begin{equation}
 	y^{\alpha_1+\alpha_3}(y-1)^{\alpha_2+\alpha_3}.
	\end{equation}
Under Haraoka's confluence \eqref{eq:Haraokaconf}, this factor tends, up to a nonzero factor independent of $x$ and $y$, to
	\begin{equation}
 	y^{\alpha_1}e^{-\gamma y}.
	\end{equation}
Hence the scalar gauge transformation \eqref{eq:HaraokaScalarGauge} used above precisely cancels this factor after confluence.
\end{remark}

Thus, modulo scalar gauge transformations, the relations above are summarized schematically by the following diagram.
\begin{equation}
\begin{tikzcd}[column sep=large,row sep=large]
\text{Pull-back of the }F_4\text{ system \eqref{eq:F4Pfaff}} 
  \arrow[d, "\operatorname{res}_{x=y}"']
  \arrow[r, "\mathrm{confluence}"]
&
\text{Confluent }F_4\text{ system \eqref{eq:HaraokaPfaff}}
  \arrow[d, "\operatorname{res}_{x=y}"]
\\
\text{Dotsenko--Fateev system \eqref{eq:DFeq}}
  \arrow[r, "\mathrm{confluence}"]
&
\text{Confluent Dotsenko--Fateev system \eqref{eq:cDFeq}}.
\end{tikzcd}
\end{equation}
Here $\mathrm{res}_{x=y}$ denotes the restriction to the singular locus $x=y$. 

\begin{remark}\label{rem:AdaHara}
The restriction in Proposition~\ref{prop:restcF4} does not
immediately yield the Stokes matrices in
Theorem~\ref{thm:StokesMat} from Haraoka's formulas
\cite{Haraoka2024}. 
Indeed, Haraoka used the blowing-up $y=\zeta x$ to study
the asymptotic behavior of the system \eqref{eq:HaraokaPfaff}
near $(x,\zeta)=(\infty,\infty)$
\cite[Proposition~3.2]{Haraoka2024}.
In these coordinates, the diagonal $x=y$ corresponds to $\zeta=1$, so the point at infinity of the restricted system corresponds to $(x,\zeta)=(\infty,1)$.
It would be interesting to study the asymptotic behavior of the confluent $F_4$ system near $(x,\zeta)=(\infty,1)$ and clarify the relation between its Stokes data and those of the restricted system.
\end{remark}

\subsection{Reduction to rank one}
\label{sec:reduction}

Finally, we show that the confluent Dotsenko--Fateev system \eqref{eq:cDFeq} can be reduced to a trivial rank-one (scalar) equation by combining the middle Laplace transform, middle convolution, gauge transformations and a quadratic change of the independent variable.
For the definitions of the middle Laplace transform and middle convolution, see \cite{adachi2025arXiv} and \cite{DR2007}, respectively.

\begin{theorem}\label{thm:reduction}
For generic parameters $(a,b,g)$, the confluent Dotsenko--Fateev system \eqref{eq:cDFeq} can be reduced to
the trivial rank-one equation
	\begin{equation}\label{eq:trivial}
    \frac{d\tilde{z}}{dt}=0
	\end{equation}
by combining the middle Laplace transform, middle convolution, gauge transformations and a quadratic change of the independent variable.
An explicit sequence of transformations is given in the proof.
\end{theorem}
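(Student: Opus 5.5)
The plan is to pass first to a Fuchsian system, then descend along a quadratic map to a rigid system, and finally run Katz's algorithm down to rank one. Throughout, the rigidity index serves as a bookkeeping device.

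\emph{Step 1: middle Laplace transform.} Rewrite \eqref{eq:cDFeq} as $x\,dV/dx=(xT+A)V$ and apply the middle Laplace transform of \cite{adachi2025arXiv} (formally $x\mapsto -d/d\xi$, $d/dx\mapsto \xi$). This produces a Fuchsian system of Birkhoff--Okubo type
\[
(\xi-T)\frac{dW}{d\xi}=(A')W ,\qquad A'=-A-I \ \text{(up to the normalization convention)},
\]
with simple poles at $\xi=0,1,2$, the eigenvalues of $T$. Each residue $E_{ii}A'$ there has rank one. At $\xi=\infty$ the residue is conjugate to $A'$, whose eigenvalues are given by the diagonal of $\Lambda$ and are generically distinct. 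The spectral type is therefore $(21,21,21;111)$, and the rigidity index is $3\cdot 5+3-2\cdot 9=0$, in agreement with the invariance of the index under the Laplace step. On the level of integral representations, \eqref{eq:cDFint} is transformed into an integral over the fibre $t_1+t_2=\xi$. This explains the poles $0,1,2=\{t_1,t_2\in\{0,1\}\}$ and indicates that the point $\xi=1$, where $t_1=1-t_2$, plays a distinguished role.

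\emph{Step 2: symmetrization.} Next I would apply a scalar gauge factor $\xi^{\mu_0}(\xi-1)^{\mu_1}(\xi-2)^{\mu_2}$ and, if necessary, one middle convolution $mc_\lambda$. The parameters are chosen so that the resulting rank-three Fuchsian system has local data at $\xi=0$ and $\xi=2$ that coincide, and so that the system itself is invariant under the involution $\iota:\xi\mapsto 2-\xi$, up to a constant gauge matrix $J$ with $J^2=I$. Concretely, I would compute the residues explicitly from $A$ and look for $J$ satisfying $J\,\mathrm{Res}_{\xi=0}J^{-1}=\mathrm{Res}_{\xi=2}$ and $J\,\mathrm{Res}_{\xi=1}J^{-1}=\mathrm{Res}_{\xi=1}$. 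The exponent shifts are then tuned so that these conditions hold for generic $(a,b,g)$.

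\emph{Step 3: quadratic descent.} The invariance allows the system to descend along $w=(\xi-1)^2$, which sends $\{0,2\}\mapsto 1$, $1\mapsto 0$ and $\infty\mapsto\infty$. Splitting $W$ into the $\pm1$ eigenspaces of $J$, and multiplying the odd part by $(\xi-1)$, gives a Fuchsian system in $w$ with singularities only at $0,1,\infty$. Its local exponents at $w=0,\infty$ are halved from those at $\xi=1,\infty$. I expect this downstairs system to be rigid, with index $2$; this is consistent with the paper's remark that the quadratic change is the only step that alters the index.

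\emph{Step 4: Katz's algorithm.} Apply Katz's algorithm to the rigid system on $\mathbb{P}^1\setminus\{0,1,\infty\}$. Alternating scalar gauge transformations with $mc_\lambda$ lowers the rank until a rank-one system $d\tilde z/dt=(\alpha/t+\beta/(t-1))\tilde z$ remains. A final scalar gauge transformation then gives \eqref{eq:trivial}. Genericity of $(a,b,g)$ is used to guarantee irreducibility and that every middle convolution has the expected rank.

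\emph{Main obstacle.} The hard part is Steps 2--3, that is, exhibiting the hidden $\mathbb{Z}/2$-symmetry. The integrand is not symmetric under $t\mapsto 1-t$, since the exponents $-(a+b+g)$ and $b$ differ. So the symmetry has to be produced by the gauge factor and the middle convolution, rather than read off from \eqref{eq:cDFint}. My first attempt would be brute-force linear algebra on the explicit Okubo residues: impose the spectral condition that the multiset of exponents at $0$ equals that at $2$, solve for the gauge and convolution parameters, and then verify the existence of $J$. If that fails, I would instead apply the middle convolution before the Laplace transform, at $x=0$, and repeat the search.
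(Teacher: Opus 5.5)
There is a genuine gap at Steps~2--3, and it is compounded by an error in Step~1.

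\textbf{Step 1 has the wrong rank and spectral data.} The spectrum of $A$ is $\{0,a,2a+g\}$, not the diagonal of $\Lambda$. Because $0$ is an eigenvalue, choose $u$ with $u^{T}A=0$; then $u^{T}(\xi-T)W$ is a first integral of your Okubo system $(\xi-T)W'=-(A+I)W$. That system is therefore reducible, and the middle Laplace transform is its rank-two part, not a rank-three system of type $(21,21,21;111)$. Up to normalization it is the paper's system \eqref{eq:cDFtoFuchs}:
\begin{itemize}
\item rank-one residues at $0,1,2$ with nontrivial eigenvalues $-(2a+2b+g)$, $-(a+g)$, $2b+g$ (this is where the diagonal of $\Lambda$ enters);
\item eigenvalues $a,\,2a+g$ at $\infty$ (this is where the spectrum of $A$ enters).
\end{itemize}
So the spectral type is $(11,11,11;11)$.

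\textbf{Step 2: the symmetry you look for does not exist.} The involution $\xi\mapsto 2-\xi$ needs the residues at $0$ and $2$ to match after gauge, but their exponent differences are $2a+2b+g$ and $2b+g$.
\begin{itemize}
\item In your rank-three picture, the double eigenvalue of a $(21)$-residue forces the nontrivial eigenvalues to agree exactly. Neither scalar gauges nor $mc_\lambda$ (which shifts all of them by the same $\lambda$) can remove the discrepancy $2(a+2b+g)$.
\item Even for the correct rank-two transform, where a sign flip is allowed, matching forces $a=0$ or $a+2b+g=0$.
\end{itemize}
So no $J$ exists for generic parameters.

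\textbf{The missing idea.} Use the \emph{other} involution of the harmonic quadruple $\{0,1,2,\infty\}$, namely $x\mapsto x/(x-1)$, which fixes $0,2$ and swaps $1\leftrightarrow\infty$. There the local data do match: the exponent difference is $a+g$ at both $1$ and $\infty$. The paper proceeds as follows:
\begin{itemize}
\item It makes the symmetry manifest with the gauge $W=(x-1)^{-a-g/2}\operatorname{diag}(1,x/(x-2))\widetilde W$, which yields $\widetilde B_\infty=\widetilde B_2$.
\item It descends along $t=x^2/(4(x-1))$, which is ramified at $x=0,2$ over $t=0,1$. This gives the rigid system $dZ/dt=\bigl(\widetilde B_1/(2t)+\widetilde B_3/(2(t-1))\bigr)Z$.
\item The gauge $t^{-a-b-g/2}$ and $mc_{-b}$ then bring this system to rank one.
\end{itemize}

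\textbf{Your involution can be rescued, but not within your plan.} It works only in rank two, and only after an extra middle convolution $mc_a$ of the rank-two system (equivalently, Laplace-transforming $x^{-a}V$). Consider the quotient of $(\xi-T)Y'=(aI-A)Y$ by its constant solution. It has nontrivial exponents $\mp(a+2b+g)$ at $0,2$. Because $A_{13}=A_{31}=0$, it is invariant under $\xi\mapsto2-\xi$ up to the factor $\bigl((\xi-2)/\xi\bigr)^{a+2b+g}$. Establishing this requires a check beyond local data, since a rank-two system with four singular points is not rigid. Your plan contains neither this check nor the rank drop.
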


\begin{proof}
We first apply the middle Laplace transform to the confluent Dotsenko--Fateev system \eqref{eq:cDFeq}.
This yields the rank-two Fuchsian system 
\begin{equation}\label{eq:cDFtoFuchs}
	\frac{dW}{dx}=\left(\frac{B_1}{x}+\frac{B_2}{x-1}+\frac{B_3}{x-2}\right)W,
\end{equation}
where 
	\begin{equation}
	B_1=\begin{pmatrix}
	-2a-2b-g & 0 \\
	0 & 0
	\end{pmatrix},
	~
	B_2=\begin{pmatrix}
	b & b \\
	-a-b-g & -a-b-g
	\end{pmatrix},
	~
	B_3=\begin{pmatrix}
	0 & 0 \\
	0 & 2b+g 
	\end{pmatrix}
	\end{equation}
and the residue matrix $B_{\infty}:=-(B_1+B_2+B_3)$ at $x=\infty$ is given by
	\begin{equation}
	B_{\infty}=\begin{pmatrix}
	 2 a+b+g & -b \\
 	a+b+g & a-b 
 	\end{pmatrix}.
	\end{equation}
We next make the gauge transformation
	\begin{equation}
	W(x)=(x-1)^{-a-\frac{g}{2}}\begin{pmatrix}1 & 0 \\0  & \frac{x}{x-2}\end{pmatrix}\widetilde{W}(x).
	\end{equation}
Then we have
	\begin{equation}\label{eq:tildeWeq}
	\frac{d\widetilde{W}}{dx}=\left(\frac{\widetilde{B}_1}{x}+\frac{\widetilde{B}_2}{x-1}+\frac{\widetilde{B}_3}{x-2}\right)\widetilde{W},
	\end{equation}
where
	\begin{equation}
	\widetilde{B}_1=\begin{pmatrix}
	-2a-2b-g & 0 \\
	-2a-2b-2g & -1
	\end{pmatrix},
	~~
	\widetilde{B}_{2}=\begin{pmatrix}
	a+b+\frac{g}{2} & -b \\
	a+b+g & -b-\frac{g}{2}
	\end{pmatrix},
	~~ 
	\widetilde{B}_3=\begin{pmatrix}
	0 & 2b \\
	0 & 2b+g+1
	\end{pmatrix}
	\end{equation}
and the residue matrix $\widetilde{B}_{\infty}$ at $x=\infty$ is given by
	\begin{equation}
	\widetilde{B}_{\infty}=-(\widetilde{B}_1+\widetilde{B}_2+\widetilde{B}_3)=
	\begin{pmatrix}
	 a+b+\frac{g}{2} & -b \\
 	a+b+g & -b-\frac{g}{2}
 	\end{pmatrix}=\widetilde{B}_2.
	\end{equation}
The quadratic change of variable
	\begin{equation}\label{eq:quadratic}
	\widetilde{W}(x)=Z(t),\quad 
	t=\frac{x^2}{4(x-1)}
	\end{equation}
transforms the system \eqref{eq:tildeWeq} into 
	\begin{equation}\label{eq:GaussSystem}
	\frac{dZ}{dt}=\left(\frac{C_1}{t}+\frac{C_2}{t-1}\right)Z,
	\end{equation}
where 
	\begin{equation}
	C_{1}=\frac{1}{2}\widetilde{B}_1=\begin{pmatrix}
	-a-b-\frac{g}{2} & 0 \\
	-a-b-g & -\frac{1}{2}
	\end{pmatrix},
	\quad 
	C_2=\frac{1}{2}\widetilde{B}_3=\begin{pmatrix}
	0 & b \\
	0 & b+\frac{g+1}{2}
	\end{pmatrix}.
	\end{equation}
To apply the middle convolution so as to reduce the rank, we first make the scalar gauge transformation
	\begin{equation}
	Z(t)=t^{-a-b-\frac{g}{2}}\widetilde{Z}(t).
	\end{equation}
Then the system \eqref{eq:GaussSystem} is transformed into
	\begin{equation}\label{eq:GaussSystemGauge}
	\frac{d\widetilde{Z}}{dt}=\left(\frac{\widetilde{C}_1}{t}+\frac{\widetilde{C}_2}{t-1}\right)\widetilde{Z},
	\end{equation}
where  
	\begin{equation}
	\widetilde{C}_{1}=C_1+\left(a+b+\frac{g}{2}\right)I=\begin{pmatrix}
	0 & 0 \\
	-a-b-g &a+b+\frac{g-1}{2}
	\end{pmatrix}
	\end{equation}
and $\widetilde{C}_2=C_2$. 
Note that the residue matrix $\widetilde{C}_{\infty}$ at $t=\infty$ is given by
	\begin{equation}
	\widetilde{C}_{\infty}=\begin{pmatrix}
	 0 & -b \\
 	a+b+g & -a-2b-g
	\end{pmatrix}
	\end{equation}
and its eigenvalues are $-b$ and $-a-b-g$. 
Then, applying the middle convolution with the parameter $-b$ to the system \eqref{eq:GaussSystemGauge} yields the rank-one system (scalar equation)
	\begin{equation}
	\frac{dz}{dt}=\left(\frac{2a+g-1}{2t}+\frac{g+1}{2(t-1)}\right)z.
	\end{equation}
Lastly, by setting 
	\begin{equation}
	z(t)=t^{a+\frac{g-1}{2}}(t-1)^{\frac{g+1}{2}}\tilde{z}(t),
	\end{equation}
we arrive at the trivial rank-one equation \eqref{eq:trivial}.
\end{proof}

\begin{remark}
Scalarization of a system of linear ordinary differential equations may in general introduce apparent singularities. 
However, both rank-two systems \eqref{eq:cDFtoFuchs} and \eqref{eq:GaussSystemGauge} appearing above can be scalarized without introducing
apparent singularities.

First, for the system \eqref{eq:cDFtoFuchs}, set $\xi=x-1$ and $W=(w_1,(1-\xi)w_2)^T$. 
Then we see that $w_2$ satisfies
	\begin{equation}
	w_2''+
	\left(
	\frac{2a+2b+g}{\xi+1}
	+
	\frac{a+g+1}{\xi}
	+
	\frac{2-2b-g}{\xi-1}
	\right)w_2'+\dfrac{Q(\xi)}{\xi(\xi-1)(\xi+1)}
	w_2=0
	\end{equation}
with $Q(\xi)=(2a+g+1)(a+1)\xi+(2a+g+1)(1-a-2b-g)$. 
This equation is nothing but the Heun differential equation with special parameters. 

Similarly, write $\widetilde{Z}=(z_1,z_2)^T$ in \eqref{eq:GaussSystemGauge}. 
Eliminating $z_2$, we obtain
	\begin{equation}
	t(1-t)z_1''+\left\{-a-b-\frac{g-1}{2}-(-a-2b-g+1)t\right\}z_1'-b(a+b+g)z_1=0.
	\end{equation}
This is nothing but the Gauss hypergeometric equation associated with ${}_2F_1(-b,-a-b-g;-a-b-\frac{g-1}{2};t)$. 
Thus, at the scalar level, the quadratic transformation \eqref{eq:quadratic} appearing
above may be regarded as a reduction of a special Heun equation to
a Gauss hypergeometric equation.
For related quadratic reductions of Heun equations
to the Gauss hypergeometric equation, see Maier \cite{Maier2005}.
\end{remark}

We conclude with a brief observation from the viewpoint of Katz's theory (cf.~\cite{HaraokaBook}). 
The rigidity indices of the systems \eqref{eq:cDFeq} and
\eqref{eq:cDFtoFuchs} are both $0$.
Thus, in the present example, the middle Laplace transform does not
change the rigidity index.
By contrast, the rigidity index of the system \eqref{eq:GaussSystem}
obtained after the quadratic change of the independent variable is $2$.
The latter system can then be reduced to the trivial rank-one equation
by middle convolution and scalar gauge transformations.
Thus the quadratic change of the independent variable is the step at
which the rigidity index changes in the above reduction.

At the same time, the effect of this quadratic transformation on the
global behavior of solutions can also be traced explicitly.
This suggests that it may be useful to combine Katz-theoretic operations
with suitable algebraic transformations of the independent variable,
both for the reduction and for the global analysis of linear differential
equations.

\section*{Acknowledgements}
The author would like to thank Hajime Nagoya for bringing his related work \cite{Nagoya2011} on the Schr\"{o}dinger equations for the quantum Painlev\'{e} equations to the author's attention and for several subsequent helpful discussions and explanations concerning the relation of that work to the present study.
The author also thanks Akihito Ebisu for indicating that the Fuchsian system \eqref{eq:tildeWeq} can be reduced to the Gauss hypergeometric system by a quadratic change of variable. 
The author also thanks Saiei-Jaeyeong Matsubara-Heo for suggesting the use of the dominated convergence theorem to improve the proofs of Proposition~\ref{prop:firstterm} and Theorem~\ref{thm:asymptoticexpansion}.

The author used ChatGPT (OpenAI) to assist with language editing and with the presentation and checking of some mathematical arguments. The author has independently verified the mathematical content and takes full responsibility for the paper.

This work was partially supported by JSPS KAKENHI Grant Number 24K22826. 
This work was also supported by the Research Institute for Mathematical
Sciences, an International Joint Usage/Research Center located in Kyoto
University.


\bibliographystyle{plain}
\bibliography{references}

\medskip\noindent
Shunya Adachi\\*
Cooperative Faculty of Education\\*
Utsunomiya University\\*
350 Mine-machi, Utsunomiya-City, Tochigi, 321-8505, Japan\\*
{\tt sadachi@a.utsunomiya-u.ac.jp}

\end{document}